\theoremstyle{plain}
\newtheorem{proposition}{Proposition}
\newtheorem*{corollary}{Corollary}
\newtheorem{lemma}{Lemma}
\newtheorem{theorem}{Theorem}
\theoremstyle{remark}
\newtheorem*{remark}{Remark}
\theoremstyle{definition}
\newtheorem{example}{Example}
\DeclareMathOperator{\Id}{Id}
\DeclareMathOperator{\Tor}{Tor}
\DeclareMathOperator{\ch}{char}
\DeclareMathOperator{\UT}{UT}
\DeclareMathOperator{\End}{End}
\DeclareMathOperator{\sign}{sign}
\DeclareMathOperator{\Hom}{Hom}
\begin{document}

\title{$\mathbb ZS_n$-modules and polynomial identities with integer coefficients}
\author{Alexey Gordienko}
\author{Geoffrey Janssens}
\address{Vrije Universiteit Brussel, Belgium}
\email{alexey.gordienko@vub.ac.be, Geoffrey.Janssens@vub.ac.be} 

\keywords{Associative ring, polynomial identity, integral representation, upper triangular matrix, Grassmann algebra, codimension, Young diagram, symmetric group, group ring.}

\begin{abstract} 
We show that, like in the case of algebras over fields, the study of multilinear polynomial identities of unitary rings can be reduced to the study of proper polynomial identities. In particular, the factors of series of $\mathbb ZS_n$-submodules in 
the $\mathbb ZS_n$-modules of multilinear polynomial functions can be derived by the analog of Young's (or Pieri's) rule from
the factors of series in the corresponding  $\mathbb ZS_n$-modules of proper polynomial functions.

As an application, we calculate the codimensions and a basis of multilinear polynomial
identities of unitary rings of upper triangular $2\times 2$ matrices and infinitely generated Grassmann algebras over unitary rings. In addition, we calculate the factors of series of $\mathbb ZS_n$-submodules for these algebras. 

Also we establish relations between codimensions of rings and codimensions of algebras and show that the analog of Amitsur's conjecture holds in all torsion-free rings,
and all torsion-free rings with $1$ satisfy the analog of Regev's conjecture.
\end{abstract}

\subjclass[2010]{Primary 16R10; Secondary 20C05, 20C10, 20C30.}

\thanks{
The first author was supported by Fonds Wetenschappelijk Onderzoek~--- Vlaanderen Pegasus Marie Curie post doctoral fellowship (Belgium) and RFBR grant 13-01-00234a (Russia).
}

\maketitle

Polynomial identities and their numeric and representational characteristics are well studied in the case of algebras over fields of characteristic zero (see e.g.~\cite{DrenKurs, ZaiGia}). 
However, polynomial identities in rings also play an important role~\cite{RowenPIR}.
The systematic study of multilinear polynomial identities started in 1950 by A.\,I.~Mal'cev~\cite{MalcevIden}
and W.~Specht~\cite{Specht}. In his paper, W.~Specht considered polynomial identities with integer coefficients.
This article is devoted to numeric and representational characteristics of polynomial identities with integer coefficients.

In Propositions~\ref{PropositionIntAbsence}--\ref{PropositionIntPIoverZp} we prove basic facts about codimensions of polynomial identities with integer coefficients in the case when the ring is an algebra over a field.
In Theorem~\ref{TheoremFCodimofaRing} we consider algebras obtained from rings by the tensor product of the ring and a field. As a consequence of Theorem~\ref{TheoremFCodimofaRing}, we derive
the analog of Amitsur's conjecture for all torsion-free rings
and the analog of Regev's conjecture for all torsion-free rings with $1$.
In Theorem~\ref{TheoremCodimProperAndOrdinary}
we show that the ordinary codimensions of rings with $1$ can be calculated using
their proper codimensions by the same formula as in the case of algebras over
fields~\cite[Theorem~4.3.12 (ii)]{DrenKurs}. 
In Theorem~\ref{TheoremZSnOrdinaryProper} we prove an analog of Drensky's theorem~\cite[Theorem 12.5.4]{DrenKurs} that establishes a relation between $\mathbb ZS_n$-modules corresponding to proper and ordinary polynomial identities of unitary rings. In order to apply Theorem~\ref{TheoremZSnOrdinaryProper}, we show
 that the analog of Young's (or Pieri's) rule holds for $\mathbb ZS_n$-modules too (Theorem~\ref{TheoremYoungsRule}).  

In the case of algebras over fields, few examples are known where the
numeric and representational characteristics can be precisely evaluated.
Among those are the algebras of upper triangular matrices~\cite{Kalyulaid, MalcevYuN, Polin, Siderov} and the Grassmann algebra~\cite{KrakReg}. In Sections~\ref{SectionUT2R} and~\ref{SectionGrassmannR} we apply the results obtained in the preceding sections, to evaluate the basis of multilinear polynomial identities with integer coefficients and calculate their numeric and representational characteristics for unitary rings of upper triangular $2\times 2$ matrices and infinitely generated Grassmann algebras over unitary rings.

\section{Introduction}

Let $R$ be a ring. If $R$ has a unit element $1_R$, then the number $$\ch R := \min\lbrace
 n\in\mathbb N \mid n1_R = 0\rbrace = \min\lbrace
n\in\mathbb N \mid na = 0 \text{ for all } a\in R\rbrace$$ is called the \textit{characteristic} of $R$. 
(As usual, if $n1_R \ne 0$ for all $n\in\mathbb N$, then $\ch R := 0$.)

Let $\mathbb Z\langle X \rangle$ be the free associative ring without $1$ on the countable set 
$X=\lbrace x_1, x_2, \dots\rbrace$, i.e., the ring of polynomials in non-commuting variables
from $X$ without a constant term.

Let $I$ be an ideal of a ring $R$. We say that $I$ is a \textit{T-ideal} of $R$ if $\varphi(I)\subseteq I$
for all $\varphi \in \End(R)$.  We say that $f \in \mathbb Z\langle X \rangle$
is a \textit{polynomial identity} of $R$ \textit{with integer coefficients} if $f(a_1, \dots, a_n)=0$ for all $a_i \in R$.
 In other words, $f$ is a polynomial identity if $\psi(f)=0$ for all $\psi \in\Hom(\mathbb Z\langle X \rangle, R)$. Note that the set $\Id(R, \mathbb Z)$ of polynomial identities of $R$ with integer coefficients is a $T$-ideal of $\mathbb Z\langle X \rangle$.

Let $P_n(\mathbb Z)$ be the additive subgroup of $\mathbb Z\langle X \rangle$ generated by $x_{\sigma(1)} x_{\sigma(2)} \dots x_{\sigma(n)}$, $\sigma \in S_n$. (Here $S_n$ is the $n$th symmetric group, $n\in\mathbb N$.)
Then $\frac{P_n(\mathbb Z)}{P_n(\mathbb Z) \cap \Id(R, \mathbb Z)}$ is a finitely generated Abelian group which is the direct sum of free 
and primary cyclic groups: $$\frac{P_n(\mathbb Z)}{P_n(\mathbb Z) \cap \Id(R, \mathbb Z)} \cong 
\underbrace{\mathbb Z \oplus \dots \oplus \mathbb Z}_{c_n(R, 0)} \oplus \bigoplus_{\substack{p\text{ is a prime}\\ \text{number}}}
\ \bigoplus_{k\in\mathbb N} \Bigl(\underbrace{\mathbb Z_{p^k}\oplus \dots \oplus \mathbb Z_{p^k}}_{c_n(R, p^k)}\Bigr).$$

We call the numbers $c_n(R, q)$ the \textit{codimensions} of polynomial identities of $R$
with integer coefficients.

Note that the symmetric group $S_n$ is acting on $\frac{P_n(\mathbb Z)}{P_n(\mathbb Z) \cap \Id(R, \mathbb Z)}$ by permutations
of variables, i.e., $\frac{P_n(\mathbb Z)}{P_n(\mathbb Z) \cap \Id(R, \mathbb Z)}$ is a $\mathbb ZS_n$-module.  We refer to $\frac{P_n(\mathbb Z)}{P_n(\mathbb Z) \cap \Id(R, \mathbb Z)}$ 
as the {\itshape $\mathbb ZS_n$-module of ordinary multilinear polynomial functions} on $R$.

Denote by $\Gamma_n(\mathbb Z)$ the subgroup of $P_n(\mathbb Z)$ that consists of {\itshape proper polynomials}, i.e linear combinations of products of long commutators. (All long commutators in the article are left normed, e.g. $[x,y,z,t]:=[[[x,y],z],t]$.)
Then $\Gamma_n(\mathbb Z)$ is a $\mathbb ZS_n$-submodule of $P_n(\mathbb Z)$. Obviously, $\Gamma_1(\mathbb Z) = 0$.

Analogously, we define the \textit{codimensions} $\gamma_n(R, q)$ of proper polynomial identities of $R$:
$$\frac{\Gamma_n(\mathbb Z)}{\Gamma_n(\mathbb Z) \cap \Id(R, \mathbb Z)} \cong 
\underbrace{\mathbb Z \oplus \dots \oplus \mathbb Z}_{\gamma_n(R, 0)} \oplus \bigoplus_{\substack{p\text{ is a prime}\\ \text{number}}}
\ \bigoplus_{k\in\mathbb N} \Bigl(\underbrace{\mathbb Z_{p^k}\oplus \dots \oplus \mathbb Z_{p^k}}_{\gamma_n(R, p^k)}\Bigr).$$
If $R$ has a unit element $1_R$, then, by the definition, $\gamma_0(R, q)$ is the number of $\mathbb Z_q$ in the decomposition of the cyclic additive subgroup of $R$ generated by $1_R$.
 We refer to $\frac{\Gamma_n(\mathbb Z)}{\Gamma_n(\mathbb Z) \cap \Id(R, \mathbb Z)}$ 
as the {\itshape $\mathbb ZS_n$-module of proper multilinear polynomial functions} on $R$.

If $A$ is an algebra over a field $F$, then we can consider codimensions $c_n(A, F) := \dim \frac{P_n(F)}{P_n(F) \cap \Id(A, F)}$ of polynomial
identities of $A$ with coefficients from $F$. (See~\cite[Definition 4.1.1]{ZaiGia}.) 
Here $\Id(A, F) \subset F\langle X\rangle$
 is the set of polynomial identities of $A$ with coefficients from $F$, and $P_n(F)$ is the subspace of
$F\langle X\rangle$  generated by $x_{\sigma(1)} x_{\sigma(2)} \dots x_{\sigma(n)}$, $\sigma \in S_n$. 
The subspace of $P_n(F)$ consisting of proper polynomials, is denoted by $\Gamma_n(F)$.

We say that $\lambda=(\lambda_1, \dots, \lambda_s)$
is a \textit{(proper or ordered) partition} of $n$ and write $\lambda\vdash n$ if $\lambda_1 \geqslant \lambda_2 \geqslant \dots \geqslant \lambda_s > 0$,
$\lambda_i\in\mathbb N$, and $\sum_{i=1}^s \lambda_i = n$.
In this case we write $\lambda \vdash n$. For our convenience, we assume $\lambda_i=0$
for all $i > s$.

We say that $\mu=(\mu_1, \dots, \mu_s)$
is an \textit{unordered partition} of $n$ if $\mu_i\in\mathbb N$ and $\sum_{i=1}^s \mu_i = n$.
In this case we write $\mu \vDash n$. Again, for our convenience, we assume $\mu_i=0$
for all $i > s$.

For every ordered or unordered partition $\lambda$ one can assign the \textit{Young diagram} $D_\lambda$
which contains $\lambda_k$ boxes in the $k$th row. If $\lambda$ is unordered, then $D_\lambda$
is called \textit{generalized}. A Young diagram filled with numbers is called a \textit{Young tableau}.
A tableau corresponding to $\lambda$ is denoted by $T_\lambda$.

In the representation theory of symmetric groups, partitions and their
  Young diagrams are widely used. (See~\cite{DrenKurs, ZaiGia} for applications to PI-algebras.)
   Let  $a_{T_{\lambda}} = \sum_{\pi \in R_{T_\lambda}} \pi$
and $b_{T_{\lambda}} = \sum_{\sigma \in C_{T_\lambda}}
 (\sign \sigma) \sigma$
be symmetrizers corresponding to a Young tableau~$T_\lambda$, $\lambda \vdash n$.
Then $S(\lambda) := (\mathbb Z S_n) b_{T_\lambda} a_{T_\lambda}$
 is the corresponding Specht module. Moreover, modules $S(\lambda)$
 that correspond to different $T_\lambda$ but the same $\lambda$, are isomorphic too.
  (The proof is analogous to the case of fields.)
 Though $S(\lambda)$ are not irreducible over $\mathbb Z$ and even contain no irreducible $\mathbb Z S_n$-submodules (it is sufficient to consider the submodule $0 \ne 2M \subsetneqq M$ for any submodule $M \subseteq S(\lambda)$), we will use
 them in order to describe the structure of $\frac{P_n(\mathbb Z)}{P_n(\mathbb Z) \cap \Id(R, \mathbb Z)}$.

\section{Codimensions of algebras over fields}

Every algebra over a field can be treated as a ring. Therefore, we have to deal with two different types of codimensions. Here we establish a relation between them.

\begin{proposition}\label{PropositionIntAbsence} Let $A$ be an algebra over a field $F$.
Then $c_n(A, q)=0$ for all $n\in\mathbb N$ and $q \ne \ch F$.
\end{proposition}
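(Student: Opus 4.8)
The plan is to split into two cases according to $\ch F$ and, in each, pin down exactly which integers can act as torsion on the abelian group $M_n := \frac{P_n(\mathbb Z)}{P_n(\mathbb Z)\cap\Id(A,\mathbb Z)}$. The underlying principle is that $A$ is an $F$-module, so its additive $\mathbb Z$-torsion is completely controlled by $\ch F$, and this constraint propagates to $M_n$ because a polynomial identity is a statement about the values $f(a_1,\dots,a_n)\in A$.

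First I would treat $\ch F = 0$. Here $n1_F$ is invertible in $F$ for every nonzero $n\in\mathbb Z$, so multiplication by $n$ is injective on $A$. Hence if $f\in P_n(\mathbb Z)$ with $mf\in\Id(A,\mathbb Z)$ for some $m\in\mathbb N$, then $m\,f(a_1,\dots,a_n)=0$ for all $a_i\in A$ forces $f(a_1,\dots,a_n)=0$, so $f\in P_n(\mathbb Z)\cap\Id(A,\mathbb Z)$. Thus $M_n$ is torsion-free, i.e. $c_n(A,p^k)=0$ for all primes $p$ and all $k$; since $q\ne\ch F=0$ exactly means $q$ is a prime power, this is the assertion.

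Next I would treat $\ch F = p>0$. Then $pa=(p1_F)a=0$ for all $a\in A$, so for every $f\in P_n(\mathbb Z)$ we have $(pf)(a_1,\dots,a_n)=p\,f(a_1,\dots,a_n)=0$, whence $pf\in P_n(\mathbb Z)\cap\Id(A,\mathbb Z)$. So $p$ annihilates $M_n$, and since $M_n$ is finitely generated it is a finite direct sum of copies of $\mathbb Z_p$. Comparing with the decomposition of $\frac{P_n(\mathbb Z)}{P_n(\mathbb Z)\cap\Id(R,\mathbb Z)}$ fixed in the introduction yields $c_n(A,0)=0$ and $c_n(A,q)=0$ for every prime power $q\ne p$, that is, $c_n(A,q)=0$ whenever $q\ne\ch F$.

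There is no serious obstacle here; the only point requiring a little care is that $A$ is not assumed to be unital, so one must run both torsion arguments through the $F$-module structure of $A$, using that the integer multiple $na$ coincides with the action of $n1_F\in F$ on $a$ — this is what makes the injectivity step (characteristic $0$) and the annihilation step (characteristic $p$) go through.
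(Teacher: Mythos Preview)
Your proof is correct and follows essentially the same approach as the paper: split on $\ch F$, use invertibility of nonzero integers in characteristic $0$ to conclude $M_n$ is torsion-free, and use $p\cdot M_n=0$ in characteristic $p$ to conclude only $\mathbb Z_p$-summands occur. The paper's version is simply more compressed.
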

\begin{proof}
Note that $(\ch F) f \in \Id(R, \mathbb Z)$ for all $f\in \mathbb Z\langle X \rangle$.
Hence $\ch F > 0$ implies $c_n(A, q)=0$ for all $n\in\mathbb N$ and $q \ne \ch F$.
If $\ch F = 0$, then every $q=p^k \ne 0$ is invertible and $c_n(A, q)=0$ for all $n\in\mathbb N$ and $q \ne 0$ too.
\end{proof}
\begin{proposition}\label{PropositionIntPIoverQ}
Let $A$ be an algebra over 
a field $F$, $\ch F = 0$.
Then $c_n(A, F)
\leqslant c_n(A, 0)$ for all $n\in\mathbb N$.
Moreover, $c_n(A, \mathbb Q)
= c_n(A, 0)$ for all $n\in\mathbb N$.
\end{proposition}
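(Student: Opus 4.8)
The plan is to compare the three modules $P_n(\mathbb Z)/(P_n(\mathbb Z)\cap\Id(A,\mathbb Z))$, $P_n(\mathbb Q)/(P_n(\mathbb Q)\cap\Id(A,\mathbb Q))$ and $P_n(F)/(P_n(F)\cap\Id(A,F))$ via the natural maps induced by the inclusions of coefficient rings. First I would observe that, since $\ch F = 0$, Proposition~\ref{PropositionIntAbsence} already gives $c_n(A,q)=0$ for $q\ne 0$, so the additive group $P_n(\mathbb Z)/(P_n(\mathbb Z)\cap\Id(A,\mathbb Z))$ is free of rank $c_n(A,0)$; hence the issue is purely one of ranks over $\mathbb Z$ versus dimensions over $F$.

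For the inequality $c_n(A,F)\leqslant c_n(A,0)$: the natural map $P_n(\mathbb Z)\to P_n(F)$, $x_{\sigma(1)}\cdots x_{\sigma(n)}\mapsto x_{\sigma(1)}\cdots x_{\sigma(n)}$, sends $P_n(\mathbb Z)\cap\Id(A,\mathbb Z)$ into $P_n(F)\cap\Id(A,F)$, so it induces a homomorphism $P_n(\mathbb Z)/(P_n(\mathbb Z)\cap\Id(A,\mathbb Z))\to P_n(F)/(P_n(F)\cap\Id(A,F))$ of abelian groups. The image is a subgroup generated by the $n!$ monomial classes, which already span $P_n(F)/(P_n(F)\cap\Id(A,F))$ as an $F$-vector space; therefore the image is a finitely generated subgroup whose $F$-span is the whole target. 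Since the source is free of rank $c_n(A,0)$, the image has rank at most $c_n(A,0)$, and an abelian subgroup of $F^{c_n(A,F)}$ that $F$-spans the space already has $\mathbb Z$-rank at least $c_n(A,F)$; combining these gives $c_n(A,F)\leqslant c_n(A,0)$.

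For the equality $c_n(A,\mathbb Q)=c_n(A,0)$ I would use the reverse direction together with tensoring by $\mathbb Q$. The key point is that $\Id(A,\mathbb Q) = \mathbb Q\otimes_{\mathbb Z}\Id(A,\mathbb Z)$ inside $\mathbb Q\langle X\rangle = \mathbb Q\otimes_{\mathbb Z}\mathbb Z\langle X\rangle$: one inclusion is clear, and for the other, if $f\in\mathbb Q\langle X\rangle$ is an identity of $A$ then clearing denominators gives an integer multiple $mf\in\Id(A,\mathbb Z)$, hence $f\in\mathbb Q\otimes_{\mathbb Z}\Id(A,\mathbb Z)$. Because $\mathbb Q$ is flat over $\mathbb Z$, applying $\mathbb Q\otimes_{\mathbb Z}(-)$ to the exact sequence $0\to P_n(\mathbb Z)\cap\Id(A,\mathbb Z)\to P_n(\mathbb Z)\to P_n(\mathbb Z)/(P_n(\mathbb Z)\cap\Id(A,\mathbb Z))\to 0$ yields $P_n(\mathbb Q)/(P_n(\mathbb Q)\cap\Id(A,\mathbb Q)) \cong \mathbb Q\otimes_{\mathbb Z}\bigl(P_n(\mathbb Z)/(P_n(\mathbb Z)\cap\Id(A,\mathbb Z))\bigr)$, which is a $\mathbb Q$-vector space of dimension equal to the $\mathbb Z$-rank $c_n(A,0)$. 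Thus $c_n(A,\mathbb Q)=c_n(A,0)$.

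The main obstacle I anticipate is the identification $\Id(A,\mathbb Q)=\mathbb Q\otimes_{\mathbb Z}\Id(A,\mathbb Z)$ and, relatedly, making sure the flatness/base-change argument is applied to the right submodules so that $P_n(\mathbb Q)\cap\Id(A,\mathbb Q)$ is literally $\mathbb Q\otimes_{\mathbb Z}(P_n(\mathbb Z)\cap\Id(A,\mathbb Z))$ rather than something a priori larger; this follows because $P_n$ is a direct summand (as a $\mathbb Z$-module) of $\mathbb Z\langle X\rangle$, spanned by a subset of the monomial basis, so intersection with $P_n$ commutes with $\mathbb Q\otimes_{\mathbb Z}(-)$. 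Once that bookkeeping is in place, the rest is the standard interplay between $\mathbb Z$-rank and $\mathbb Q$-dimension.
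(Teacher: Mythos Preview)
Your argument is correct and follows essentially the same line as the paper's proof: both start from Proposition~\ref{PropositionIntAbsence} to see that $P_n(\mathbb Z)/(P_n(\mathbb Z)\cap\Id(A,\mathbb Z))$ is free of rank $c_n(A,0)$, use that the images of its generators $F$-span $P_n(F)/(P_n(F)\cap\Id(A,F))$ to get the inequality, and use a ``clear denominators'' step to pass between $\Id(A,\mathbb Q)$ and $\Id(A,\mathbb Z)$ for the equality. The only difference is packaging: the paper picks explicit free generators $f_1,\dots,f_s$ and checks their $\mathbb Q$-linear independence by hand, whereas you rephrase the equality via flatness of $\mathbb Q$ over $\mathbb Z$ and the direct-summand observation for $P_n$; both routes encode the same content.
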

\begin{proof}
By Proposition~\ref{PropositionIntAbsence}, $\frac{P_n(\mathbb Z)}{P_n(\mathbb Z) \cap \Id(A, \mathbb Z)}$ is
a free Abelian group. Let $f_1, \dots, f_s$ be the preimages of its free generators in $P_n(\mathbb Z)$.
Note that $P_n(\mathbb Z) \subset P_n(\mathbb Q) \subseteq P_n(F)$ and for every $\sigma \in S_n$ the monomial $x_{\sigma(1)}x_{\sigma(2)}\dots x_{\sigma(n)}$ can be expressed as a linear combination with integer coefficients of
$f_1, \dots, f_s$ and an element of $P_n(\mathbb Z) \cap \Id(A, \mathbb Z)$.
Hence the images of $f_1, \dots, f_s$ generate $\frac{P_n(F)}{P_n(F) \cap \Id(A, F)}$
and $c_n(A, F) \leqslant c_n(A, 0)=s$. 

Suppose $f_1, \dots, f_s$ are linearly dependent modulo $\Id(A, \mathbb Q)$. In this case $\frac{r_1}{q_1}f_1+\dots +\frac{r_1}{q_1}f_s \in \Id(A, \mathbb Q)$ for some $q_i \in\mathbb N$, $r_i \in \mathbb Z$.
Thus $$f:=r_1\left(\prod_{i=2}^s q_i\right)f_1 + r_1q_1\left(\prod_{i=3}^s q_i\right)f_2+\dots
+ r_s \left(\prod_{i=1}^{s-1} q_i\right)f_s \in \Id(A, \mathbb Q).$$
However, $f\in \mathbb Z\langle X\rangle$. Hence $f \in \Id(A, \mathbb Z)$
and all $r_i = 0$ since $f_i$ are linearly independent modulo $\Id(A, \mathbb Z)$.
Therefore, the images of $f_1, \dots, f_s$ form a basis of $\frac{P_n(\mathbb Q)}{P_n(\mathbb Q) \cap \Id(A, \mathbb Q)}$ and $c_n(A, \mathbb Q)=c_n(A, 0)=s$.
\end{proof}

The next example shows that in the case $F\supsetneqq \mathbb Q$  we
could have $c_n(A, F) < c_n(A, \mathbb Q) = c_n(A, 0)$.

\begin{example}
Note that $P_3(\mathbb Q)\cong \mathbb QS_3 \cong S^{\mathbb Q}(3) \oplus S^{\mathbb Q}(2,1) \oplus S^{\mathbb Q}(2,1)\oplus S^{\mathbb Q}(1^3)$.
Let $a\in \mathbb QS_3$ such that $S^{\mathbb Q}(2,1)=\mathbb QS_3 a$. Denote by $f_1$ and $f_2$ the polynomials
that correspond to $a$ in the copies of $S^{\mathbb Q}(2,1)$ in $P_3(\mathbb Q)$.
Let $F=\mathbb Q(\sqrt 2)$.
Consider the $T$-ideal $I$ of $ F\langle X \rangle$ generated by $(f_1 + {\sqrt 2} f_2)$.
We claim that $c_3(F\langle X \rangle/I, F) = 4 < c_3(F\langle X \rangle/I, \mathbb Q)=6$.
\end{example}
\begin{proof}
First we notice that $P_3(F)\cap \Id(F\langle X \rangle/I, F)
= F S_3\cdot (f_1 + {\sqrt 2} f_2) \cong S^F(2,1)$.
Hence by the hook formula, $c_3(F\langle X \rangle/I, F) = 6-2 = 4$.
However, $P_3(\mathbb Q)\cap \Id(F\langle X \rangle/I, \mathbb Q)
= P_3(\mathbb Q)\cap F S_3 (f_1 + {\sqrt 2} f_2)=0$.
Indeed, suppose $f = b (f_1  + {\sqrt 2} f_2) \in P_3(\mathbb Q)$
for some $b\in F S_3$. Note that $b=b_1 +\sqrt 2 b_2$ where $b_1, b_2\in \mathbb QS_3$.
Therefore, $f = (b_1 +\sqrt 2 b_2) (f_1  + {\sqrt 2} f_2) = (b_1f_1+2 b_2 f_2) + \sqrt 2 (b_1 f_2 + b_2 f_1)$ and $f \in P_3(\mathbb Q)$ implies $b_1 f_2 + b_2 f_1 = 0$.
Recall that $\mathbb Q S_3 f_1 \oplus \mathbb Q S_3 f_2$ is the direct sum of $\mathbb Q S_3$-submodules.
Hence $b_1f_2 = b_2 f_1 = 0$. However, $\mathbb Q S_3 f_1 \cong \mathbb Q S_3 f_2$.
Thus $b_1f_1 = b_2 f_2 = 0$ too, $f=0$,
 $P_3(\mathbb Q)\cap \Id(F\langle X \rangle/I, \mathbb Q) = 0$
and $c_3(F\langle X \rangle/I, \mathbb Q)=6$.
\end{proof}

The result, analogous to Proposition~\ref{PropositionIntPIoverQ}, holds in a positive characteristic.

\begin{proposition}\label{PropositionIntPIoverZp}
Let $A$ be an algebra over 
a field $F$, $\ch F = p$.
Then $c_n(A, F)
\leqslant c_n(A, p)$ for all $n\in\mathbb N$.
Moreover, $c_n(A, \mathbb Z_p)
= c_n(A, p)$ for all $n\in\mathbb N$.
\end{proposition}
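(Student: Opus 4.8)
The plan is to follow the proof of Proposition~\ref{PropositionIntPIoverQ}, but in positive characteristic the argument is in fact simpler: no denominators have to be cleared, because the only role of $\mathbb Z$ here will be that $\Id(A,\mathbb Z)$ is the full preimage of $\Id(A,\mathbb Z_p)$ under reduction of coefficients modulo $p$.

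First I would note that, since $pf\in\Id(A,\mathbb Z)$ for every $f\in\mathbb Z\langle X\rangle$ (because $p\cdot a=0$ for all $a\in A$), the group $M_n:=\frac{P_n(\mathbb Z)}{P_n(\mathbb Z)\cap\Id(A,\mathbb Z)}$ is annihilated by $p$ and is therefore a vector space over $\mathbb Z_p$; by Proposition~\ref{PropositionIntAbsence} its dimension equals $c_n(A,p)$. Next, let $\pi\colon\mathbb Z\langle X\rangle\to\mathbb Z_p\langle X\rangle$ be the surjection reducing coefficients modulo $p$, with kernel $p\mathbb Z\langle X\rangle$. For fixed $a_1,\dots,a_n\in A$ the evaluation $f\mapsto f(a_1,\dots,a_n)$ vanishes on $p\mathbb Z\langle X\rangle$ and hence factors through $\pi$; consequently $f\in\Id(A,\mathbb Z)$ if and only if $\pi(f)\in\Id(A,\mathbb Z_p)$, that is, $\Id(A,\mathbb Z)=\pi^{-1}(\Id(A,\mathbb Z_p))$. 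Restricting $\pi$ to $P_n(\mathbb Z)$, whose image is $P_n(\mathbb Z_p)$, one sees that $\pi$ induces an isomorphism of abelian groups $M_n\cong\frac{P_n(\mathbb Z_p)}{P_n(\mathbb Z_p)\cap\Id(A,\mathbb Z_p)}$; since the right-hand side has $\mathbb Z_p$-dimension $c_n(A,\mathbb Z_p)$, comparison with the previous sentence gives $c_n(A,\mathbb Z_p)=c_n(A,p)$.

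For the inequality I would use that $F$ contains its prime subfield $\mathbb Z_p$, so $P_n(\mathbb Z_p)\subseteq P_n(F)$ and, by the same factorization of evaluations, $\Id(A,\mathbb Z_p)=\Id(A,F)\cap\mathbb Z_p\langle X\rangle$. Pick $g_1,\dots,g_s\in P_n(\mathbb Z_p)$ whose images form a $\mathbb Z_p$-basis of $\frac{P_n(\mathbb Z_p)}{P_n(\mathbb Z_p)\cap\Id(A,\mathbb Z_p)}$, where $s=c_n(A,\mathbb Z_p)=c_n(A,p)$. Each monomial $x_{\sigma(1)}\cdots x_{\sigma(n)}\in P_n(\mathbb Z_p)$ is a $\mathbb Z_p$-linear combination of $g_1,\dots,g_s$ modulo $P_n(\mathbb Z_p)\cap\Id(A,\mathbb Z_p)\subseteq\Id(A,F)$, and these monomials span $P_n(F)$ over $F$; hence the images of $g_1,\dots,g_s$ span $\frac{P_n(F)}{P_n(F)\cap\Id(A,F)}$ over $F$, and $c_n(A,F)\leqslant s=c_n(A,p)$.

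The only point that requires care is the claim that evaluation at elements of $A$ factors through reduction modulo $p$ --- this is exactly where the hypothesis $\ch F=p$ enters --- and everything else is routine bookkeeping with $\pi$. I do not anticipate a real obstacle: in contrast with Proposition~\ref{PropositionIntPIoverQ}, there is nothing here to clear, which is why the equality $c_n(A,\mathbb Z_p)=c_n(A,p)$ comes out directly rather than needing a separate linear-independence step.
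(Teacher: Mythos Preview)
Your proof is correct and follows essentially the same approach as the paper: both use Proposition~\ref{PropositionIntAbsence} to see that $\frac{P_n(\mathbb Z)}{P_n(\mathbb Z)\cap\Id(A,\mathbb Z)}$ is a $\mathbb Z_p$-vector space of dimension $c_n(A,p)$, and both push a spanning set through to $P_n(F)$ via the monomial basis to obtain the inequality. The only cosmetic difference is that the paper fixes preimages $f_1,\dots,f_s$ and verifies their linear independence over $\mathbb Z_p$ separately, whereas you observe directly that $\Id(A,\mathbb Z)=\pi^{-1}(\Id(A,\mathbb Z_p))$ and read off the isomorphism $M_n\cong\frac{P_n(\mathbb Z_p)}{P_n(\mathbb Z_p)\cap\Id(A,\mathbb Z_p)}$ --- this is the same content, just packaged a little more cleanly.
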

\begin{proof}
By Proposition~\ref{PropositionIntAbsence}, $\frac{P_n(\mathbb Z)}{P_n(\mathbb Z) \cap \Id(A, \mathbb Z)}$ is
the direct sum of copies of $\mathbb Z_p$. Let $f_1, \dots, f_s$ be the preimages of their standard generators in $P_n(\mathbb Z)$.
Note that $P_n(\mathbb Z_p)$ is an image of $P_n(\mathbb Z)$
under the natural homomorphism, $P_n(\mathbb Z_p) \subseteq P_n(F)$ and for every $\sigma \in S_n$ the monomial $x_{\sigma(1)}x_{\sigma(2)}\dots x_{\sigma(n)}$ can be expressed as a linear combination with integer coefficients of
$f_1, \dots, f_s$ and an element of $P_n(\mathbb Z) \cap \Id(A, \mathbb Z)$.
Hence the images of $f_1, \dots, f_s$ generate $\frac{P_n(F)}{P_n(F) \cap \Id(A, F)}$
and $c_n(A, F) \leqslant c_n(A, p)$. 

Suppose $f_1, \dots, f_s$ are linearly dependent modulo $\Id(A, \mathbb Z_p)$. In this case $ \bar m_1 f_1+\dots +\bar m_s f_s \in \Id(A, \mathbb Z_p)$ for some $m_i \in \mathbb Z$.
Thus $m_1 f_1+\dots +m_s f_s \in \Id(A, \mathbb Z)$
and all $m_i \in p\mathbb Z$ since $f_i$ generate modulo $\Id(A, \mathbb Z)$
the direct sum of copies of $\mathbb Z_p$.
Therefore, the images of $f_1, \dots, f_s$ form a basis of $\frac{P_n(\mathbb Z_p)}{P_n(\mathbb Z_p) \cap \Id(A, \mathbb Z_p)}$ and $c_n(A, \mathbb Z_p)=c_n(A, p)=s$.
\end{proof}

The next result is concerned with the extension of a ring to an algebra over a field.

\begin{theorem}\label{TheoremFCodimofaRing}
Let $R$ be a ring and let $F$ be a field.
Then $$c_n(R \mathbin{\otimes_{\mathbb Z}} F, F)
= \left\lbrace
\begin{array}{ccc} c_n(R/{\Tor R}, 0) & if & \ch F = 0, \\ 
c_n(R/p R, p)& if & \ch F = p \end{array}\right.$$
where $\Tor R := \lbrace r \in R \mid mr=0 \text{ for some } m\in\mathbb N\rbrace$ is the torsion of $R$.
\end{theorem}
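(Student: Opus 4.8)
The plan is to reduce everything to the prime subfield $F_0$ of $F$ — so $F_0=\mathbb Q$ when $\ch F=0$ and $F_0=\mathbb Z_p$ when $\ch F=p$ — and then quote Proposition~\ref{PropositionIntPIoverQ} or Proposition~\ref{PropositionIntPIoverZp}. The first ingredient is the (standard) insensitivity of multilinear codimensions to field extensions, which I would prove in the form $P_n(F)\cap\Id(A\otimes_{F_0}F,F)=(P_n(F_0)\cap\Id(A,F_0))\otimes_{F_0}F$ for every $F_0$-algebra $A$, under the canonical identification $P_n(F)=P_n(F_0)\otimes_{F_0}F$; this yields $c_n(A\otimes_{F_0}F,F)=c_n(A,F_0)$ by right exactness of $-\otimes_{F_0}F$ together with $\dim_F(V\otimes_{F_0}F)=\dim_{F_0}V$. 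The inclusion ``$\supseteq$'' is clear. For ``$\subseteq$'', fix an $F_0$-basis $(e_j)_j$ of $F$, write an identity $f\in P_n(F)$ as $f=\sum_j f_j\otimes e_j$ with $f_j\in P_n(F_0)$, and evaluate $f$ on tuples $(a_1\otimes 1,\dots,a_n\otimes 1)$ with $a_i\in A$: since $A\otimes_{F_0}F=\bigoplus_j(A\otimes e_j)$ as $F_0$-vector spaces, the equality $\sum_j f_j(a_1,\dots,a_n)\otimes e_j=0$ forces $f_j(a_1,\dots,a_n)=0$ for every $j$ and every $a_i$, i.e.\ each $f_j\in\Id(A,F_0)$.

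Next I would pin down the $T$-ideal of $\mathbb Z\langle X\rangle$ attached to the ring $R\otimes_{\mathbb Z}F_0$. By $F_0$-multilinearity of polynomial evaluation in an $F_0$-algebra, and since the elements $r\otimes 1$ span $R\otimes_{\mathbb Z}F_0$ over $F_0$, a polynomial $f\in P_n(\mathbb Z)$ belongs to $\Id(R\otimes_{\mathbb Z}F_0,\mathbb Z)$ exactly when $f(r_1,\dots,r_n)\otimes 1=0$ in $R\otimes_{\mathbb Z}F_0$ for all $r_i\in R$. Now the kernel of $r\mapsto r\otimes 1\colon R\to R\otimes_{\mathbb Z}F_0$ equals $\Tor R$ when $\ch F_0=0$ (as $R\otimes_{\mathbb Z}\mathbb Q$ is the localisation of $R$ at $\mathbb Z\setminus\{0\}$, so $r\otimes 1=0$ iff $mr=0$ for some $m\in\mathbb N$) and equals $pR$ when $\ch F_0=p$ (as $R\otimes_{\mathbb Z}\mathbb Z_p=R/pR$). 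Since $r\mapsto\bar r$ is onto $R/\Tor R$, respectively onto $R/pR$, and $f(\bar r_1,\dots,\bar r_n)=\overline{f(r_1,\dots,r_n)}$, it follows that $\Id(R\otimes_{\mathbb Z}\mathbb Q,\mathbb Z)=\Id(R/\Tor R,\mathbb Z)$ and $\Id(R\otimes_{\mathbb Z}\mathbb Z_p,\mathbb Z)=\Id(R/pR,\mathbb Z)$, whence $c_n(R\otimes_{\mathbb Z}\mathbb Q,q)=c_n(R/\Tor R,q)$ and $c_n(R\otimes_{\mathbb Z}\mathbb Z_p,q)=c_n(R/pR,q)$ for every $q$, in particular for $q=0$ and $q=p$.

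It then remains to assemble the chain. Using $R\otimes_{\mathbb Z}F=(R\otimes_{\mathbb Z}F_0)\otimes_{F_0}F$ and the field-extension invariance just established, $c_n(R\otimes_{\mathbb Z}F,F)=c_n(R\otimes_{\mathbb Z}F_0,F_0)$. If $\ch F=0$, this is $c_n(R\otimes_{\mathbb Z}\mathbb Q,\mathbb Q)$, which equals $c_n(R\otimes_{\mathbb Z}\mathbb Q,0)$ by Proposition~\ref{PropositionIntPIoverQ} and hence $c_n(R/\Tor R,0)$ by the previous paragraph; if $\ch F=p$, the identical argument with Proposition~\ref{PropositionIntPIoverZp} in place of Proposition~\ref{PropositionIntPIoverQ} gives $c_n(R\otimes_{\mathbb Z}\mathbb Z_p,\mathbb Z_p)=c_n(R\otimes_{\mathbb Z}\mathbb Z_p,p)=c_n(R/pR,p)$. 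I expect no deep obstacle here: the only points requiring care are the bookkeeping among the three notions of codimension in play — over a field, over $\mathbb Z$, and before versus after the passage from $R$ to $R\otimes_{\mathbb Z}F_0$ — and the correct identification of $\ker(R\to R\otimes_{\mathbb Z}F_0)$, since all the genuine representation-theoretic content has already been absorbed into Propositions~\ref{PropositionIntPIoverQ} and~\ref{PropositionIntPIoverZp}.
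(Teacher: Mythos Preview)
Your proof is correct and follows essentially the same route as the paper: reduce to the prime subfield via invariance of multilinear codimensions under field extension, identify $\Id(R\otimes_{\mathbb Z}F_0,\mathbb Z)$ with $\Id(R/\ker,\mathbb Z)$ by computing $\ker(R\to R\otimes_{\mathbb Z}F_0)$, and then invoke Propositions~\ref{PropositionIntPIoverQ} and~\ref{PropositionIntPIoverZp}. The only cosmetic differences are that the paper cites \cite[Theorem~4.1.9]{ZaiGia} for the field-extension step where you supply the short direct argument, and it packages the kernel computation as a separate lemma (Lemma~\ref{LemmaTensorWithAField}) proved elementwise in the free module $H_{R\times F_0}$, whereas you appeal to localisation for $F_0=\mathbb Q$ and the identification $R\otimes_{\mathbb Z}\mathbb Z_p\cong R/pR$ for $F_0=\mathbb Z_p$.
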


First, we prove the following lemma

\begin{lemma}\label{LemmaTensorWithAField}
Let $R$ be a ring and let $F$ be a field.
Then $$R \otimes 1_F \cong \left\lbrace
\begin{array}{ccc} R/{\Tor R} & if & F = \mathbb Q, \\ 
R/p R & if & F = \mathbb Z_p \end{array}\right.$$
where $R \otimes 1_F \subseteq R \mathbin{\otimes_{\mathbb Z}} F$ is a subring.
\end{lemma}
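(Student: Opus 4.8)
The plan is to describe the subring $R \otimes 1_F$ concretely and identify it with the quotient of $R$ by an appropriate ideal. First I would set up the natural additive map $\varphi\colon R \to R \otimes_{\mathbb Z} F$ sending $r \mapsto r \otimes 1_F$; this is visibly a ring homomorphism onto the subring $R \otimes 1_F$, since $(r \otimes 1_F)(s \otimes 1_F) = rs \otimes 1_F$ and $1_F \cdot 1_F = 1_F$. So it remains to compute $\ker \varphi$ in the two cases.

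For $F = \mathbb Q$, I would argue that $\ker \varphi = \Tor R$. The inclusion $\Tor R \subseteq \ker \varphi$ is immediate: if $mr = 0$ with $m \in \mathbb N$, then $r \otimes 1 = r \otimes \frac{m}{m} = mr \otimes \frac{1}{m} = 0$. For the reverse inclusion, the key point is that $\mathbb Q = \varinjlim \frac{1}{m}\mathbb Z$ as a $\mathbb Z$-module, so $R \otimes_{\mathbb Z} \mathbb Q = \varinjlim (R \otimes_{\mathbb Z} \frac{1}{m}\mathbb Z) = \varinjlim R$ with transition maps given by multiplication by integers; an element $r \otimes 1$ vanishes in this colimit precisely when $r$ is annihilated by some nonzero integer, i.e.\ $r \in \Tor R$. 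Alternatively, and perhaps more elementarily, one writes a general element of $R \otimes_{\mathbb Z} \mathbb Q$ with a common denominator as $x \otimes \frac{1}{m}$ for some $x \in R$, $m \in \mathbb N$, and checks that $x \otimes \frac{1}{m} = 0$ iff $m' x = 0$ for some $m' \in \mathbb N$; applying this to $x = mr$ gives the claim. Hence $R \otimes 1_{\mathbb Q} \cong R/\Tor R$.

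For $F = \mathbb Z_p$, I would show $\ker \varphi = pR$. Since $R \otimes_{\mathbb Z} \mathbb Z_p \cong R/pR$ as abelian groups (tensoring the short exact sequence $0 \to \mathbb Z \xrightarrow{p} \mathbb Z \to \mathbb Z_p \to 0$ with $R$, or directly: every element of $R \otimes_{\mathbb Z} \mathbb Z_p$ has the form $r \otimes \bar 1$, and $r \otimes \bar 1 = 0$ iff $r \in pR$), the map $\varphi$ is already surjective onto all of $R \otimes_{\mathbb Z} \mathbb Z_p$ with kernel $pR$, so $R \otimes 1_{\mathbb Z_p} = R \otimes_{\mathbb Z} \mathbb Z_p \cong R/pR$.

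I do not expect a serious obstacle here; the only mild subtlety is justifying the description of $\ker \varphi$ for $F = \mathbb Q$, which one should do either via the colimit description of $\mathbb Q$ or via the common-denominator normal form for elements of $R \otimes_{\mathbb Z} \mathbb Q$, taking care that $R$ need not have a unit and the argument must not use one. Everything else is a routine verification that $\varphi$ is a ring homomorphism and an application of the first isomorphism theorem for rings.
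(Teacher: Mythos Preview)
Your proposal is correct and follows the same overall strategy as the paper: define the ring homomorphism $\varphi\colon R\to R\otimes 1_F$, $r\mapsto r\otimes 1_F$, and identify $\ker\varphi$ in each case. The difference lies in how the kernel is computed. The paper works entirely from the explicit presentation of $R\otimes_{\mathbb Z}F$ as a quotient of the free abelian group on $R\times F$: for $F=\mathbb Q$ it clears denominators in a relation witnessing $a\otimes 1_{\mathbb Q}=0$ to produce a relation in $R\otimes_{\mathbb Z}\mathbb Z\cong R$ showing $ma=0$; for $F=\mathbb Z_p$ it lifts a relation in $H_{R\times\mathbb Z_p}$ to $H_{R\times\mathbb Z}$ modulo the extra relations $(r,\beta)-(r,\beta+p)$ to conclude $a\in pR$. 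Your argument instead invokes the colimit description $\mathbb Q=\varinjlim\frac1m\mathbb Z$ (so that $r\otimes 1$ dies in the colimit iff some $mr=0$) and, for $\mathbb Z_p$, the right-exactness of $R\otimes_{\mathbb Z}(-)$ applied to $0\to\mathbb Z\xrightarrow{p}\mathbb Z\to\mathbb Z_p\to 0$. Your route is shorter and more conceptual; the paper's is more self-contained, requiring nothing beyond the raw definition of the tensor product.
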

\begin{proof}
Consider the natural homomorphism $\varphi \colon R \to R \otimes 1_F$
where $\varphi(a)=a\otimes 1_F$, $a\in R$.

Suppose $F = \mathbb Q$. 
If $ma = 0$ for some $m\in\mathbb N$ and $a\in R$,
then $\varphi(a)=a \otimes 1_{\mathbb Q} = ma \otimes \frac{1_{\mathbb Q}}{m} = 0$.
Hence $\Tor R \subseteq \ker \varphi$. 
We claim that $\ker \varphi = \Tor R$. 

Let $a \in \ker \varphi$, i.e., $a \otimes   1_{\mathbb Q} = 0$. By one of the definitions of the tensor product, \begin{equation*}\begin{split}(a, 1_{\mathbb Q})=\sum_i \ell_i((a_i+b_i, q_i)-(a_i,q_i)-(b_i,q_i))+\\ \sum_i m_i((c_i, s_i+t_i)-(c_i,s_i)-(c_i,t_i)) +
\sum_i n_i((k_i d_i, u_i) - (d_i, k_i u_i))\end{split}\end{equation*} holds for some $a_i,b_i,c_i,d_i\in R$, $k_i, \ell_i,m_i,n_i\in\mathbb Z$,
and $q_i, s_i, t_i, u_i \in \mathbb Q$ in the free $\mathbb Z$-module $H_{R\times \mathbb Q}$
with the basis $R\times \mathbb Q$.
We can find such $m \in\mathbb N$ that all $mq_i,ms_i,mt_i, mu_i \in \mathbb Z$.
Then \begin{equation*}\begin{split}(a, m)=\sum_i \ell_i((a_i+b_i, mq_i)-(a_i,mq_i)-(b_i,mq_i))+\\ \sum_i m_i((c_i, ms_i+mt_i)-(c_i,ms_i)-(c_i,mt_i)) +
\sum_i n_i((k_i d_i, mu_i) - (d_i, k_i mu_i))\end{split}\end{equation*} holds in the free $\mathbb Z$-module $H_{R\times \mathbb Z}$
with the basis $R\times \mathbb Z$. Note that in the right hand side of the latter equality
we have a relation in $R \mathbin{\otimes_{\mathbb Z}} \mathbb Z$.
Hence $a\otimes m = 0$ in $R \mathbin{\otimes_{\mathbb Z}} \mathbb Z \cong R$
and $ma=0$. Thus $a\in \Tor R$. Therefore, $\ker \varphi = \Tor R$ and $R \otimes 1_{\mathbb Q} \cong R/{\Tor R}$.

Suppose $F = \mathbb Z_p$. Then $\varphi(pR)= R\otimes p1_{\mathbb Z_p} = 0$
and $pR \subseteq \ker \varphi$.
Let $a\in\ker\varphi$, i.e., $a \otimes 1_{\mathbb Z_p} = 0$.
Then \begin{equation*}\begin{split}(a, 1_{\mathbb Z_p})=\sum_i q_i((a_i+b_i, \bar\ell_i)-(a_i,\bar\ell_i)-(b_i,\bar\ell_i))+\\ \sum_i s_i((c_i, \bar m_i+ \bar n_i)-(c_i, \bar m_i)-(c_i, \bar n_i)) + \sum_i t_i((k_i d_i, \bar u_i) - (d_i, k_i \bar u_i)) \end{split}\end{equation*} holds for some $a_i,b_i,c_i, d_i\in R$
and $k_i, \ell_i,m_i,n_i, q_i, s_i, t_i, u_i \in \mathbb Z$ in the free $\mathbb Z$-module $H_{R\times \mathbb Z_p}$
with the basis $R\times \mathbb Z_p$.
Note that $H_{R\times \mathbb Z_p}$ is the factor module of $H_{R\times \mathbb Z}$
by the subgroup $\langle (a, m)-(a, m+p) \mid  a \in R,\ m\in\mathbb Z\rangle_{\mathbb Z}$.
Hence \begin{equation*}\begin{split}(a, 1_\mathbb Z)=\sum_i q_i((a_i+b_i, \ell_i)-(a_i,\ell_i)-(b_i,\ell_i))+ 
\sum_i s_i((c_i, m_i+n_i)-(c_i,m_i)-(c_i,n_i))+\\ \sum_i t_i((k_i d_i,  u_i) - (d_i, k_i  u_i))+ \sum_i \alpha_i((r_i, \beta_i)-(r_i, \beta_i+p))\end{split}\end{equation*}
holds in $H_{R\times \mathbb Z}$ for some $r_i \in R$ and $\alpha_i, \beta_i\in\mathbb Z$.
Thus $a\otimes 1_\mathbb Z = \sum_i \alpha_i r_i \otimes p$.
Now we use the isomorphism $R \mathbin{\otimes_{\mathbb Z}} \mathbb Z \cong R$
 and get  $a = \sum_i \alpha_i r_i p \in pR$. Therefore, $\ker \varphi = pR$
 and
$R \otimes 1_{\mathbb Z_p} \cong R/{p R}$.
\end{proof}
\begin{proof}[Proof of Theorem~\ref{TheoremFCodimofaRing}.]
Recall that $R \mathbin \otimes 1_F$ is a subring of $R \mathbin{\otimes_{\mathbb Z}} F$.
Hence $P_n(\mathbb Z) \cap \Id(R \mathbin{\otimes_{\mathbb Z}} F, \mathbb Z)
  \subseteq P_n(\mathbb Z) \cap \Id(R \otimes 1_F, \mathbb Z)$.
Conversely, $P_n(\mathbb Z) \cap \Id(R \mathbin{\otimes_{\mathbb Z}} F, \mathbb Z)
  \supseteq P_n(\mathbb Z) \cap \Id(R \otimes 1_F, \mathbb Z)$ since $R \mathbin\otimes 1_F$ generates $R \mathbin{\otimes_{\mathbb Z}} F$
as an $F$-vector space. Therefore, $c_n(R \otimes 1_F, \ch F) = c_n(R \mathbin{\otimes_{\mathbb Z}} F, \ch F)$ and we get Theorem~\ref{TheoremFCodimofaRing} for $F=\mathbb Q$ and $F=\mathbb Z_p$ from Lemma~\ref{LemmaTensorWithAField} and Propositions~\ref{PropositionIntPIoverQ}, \ref{PropositionIntPIoverZp}.
The general case follows from the fact that $(R \mathbin{\otimes_{\mathbb Z}} F) \mathbin{\otimes_{F}} K \cong R \mathbin{\otimes_{\mathbb Z}} K$ (as a $K$-algebra) for any field
extension $K \supseteq F$ and, by~\cite[Theorem~4.1.9]{ZaiGia}, $$c_n(R \mathbin{\otimes_{\mathbb Z}} K, K)=c_n((R \mathbin{\otimes_{\mathbb Z}} F) \mathbin{\otimes_F} K, K)= c_n(R \mathbin{\otimes_{\mathbb Z}} F, F).$$
\end{proof}

\begin{corollary}
Let $R$ be a torsion-free ring satisfying a non-trivial polynomial identity. 
Then 
\begin{enumerate} \item either $c_n(R, 0) = 0$ for all $n\geqslant n_0$, $n_0\in\mathbb N$,
or there exist $d\in\mathbb N$, $C_1, C_2 > 0$, $q_1, q_2 \in\mathbb R$
such that $C_1 n^{q_1} d^n \leqslant c_n(R, 0) \leqslant C_2 n^{q_2} d^n$
for all $n\in\mathbb N$; 
in particular, polynomial identities of $R$ satisfy the analog of \textit{Amitsur's
conjecture}, i.e., there exists $\lim_{n\to\infty} \sqrt[n]{c_n(R, 0)} \in\mathbb Z_+$;
\item
if $R$ contains $1$, then there exist $C>0$ and $q\in\mathbb Z$ such that
$c_n(R, 0) \sim C n^{\frac{q}{2}} d^n$ as $n\to\infty$, i.e., the analog of \textit{Regev's
conjecture} holds in $R$. (We write $f \sim g$ if $\lim \frac f g = 1$.)
\end{enumerate}
\end{corollary}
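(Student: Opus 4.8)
The plan is to push everything over to the $\mathbb Q$-algebra $A:=R\mathbin{\otimes_{\mathbb Z}}\mathbb Q$ and then quote the known asymptotics of codimensions of PI-algebras over a field of characteristic zero. Since $R$ is torsion-free, $\Tor R=0$, so applying Theorem~\ref{TheoremFCodimofaRing} with $F=\mathbb Q$ gives $c_n(R,0)=c_n(A,\mathbb Q)$ for every $n$; moreover, if $1\in R$ then $1_R\otimes 1$ is a unit of $A$ and $A\ne 0$. Hence it is enough to establish (1) for the $\mathbb Q$-algebra $A$ and (2) for the unitary $\mathbb Q$-algebra $A$.

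Next I would check that $A$ is PI over $\mathbb Q$. Starting from a non-trivial $f\in\Id(R,\mathbb Z)$, I would first observe that, because $R$ is torsion-free, the multihomogeneous components of $f$ in each variable again lie in $\Id(R,\mathbb Z)$: substituting $x_i\mapsto kx_i$ for $k=0,1,\dots,\deg_{x_i}f$ and inverting the integer Vandermonde matrix shows that each such component, after multiplication by a nonzero integer, vanishes on $R$, hence vanishes on $R$. Picking a non-trivial multihomogeneous component and fully linearising it by the usual substitution process (which preserves identities of $R$ by the same Vandermonde argument, together with the fact that $\Id(R,\mathbb Z)$ is an ideal) yields a non-trivial multilinear $g\in\Id(R,\mathbb Z)$; non-triviality survives because identifying the linearised variables sends $g$ to a nonzero integer multiple of the chosen component. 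Finally, multilinearity of $g$ gives $g(a_1\otimes 1,\dots,a_n\otimes 1)=g(a_1,\dots,a_n)\otimes 1=0$ for all $a_i\in R$, and since every element of $A$ is a $\mathbb Q$-linear combination of elements $a\otimes 1$, it follows that $g\in\Id(A,\mathbb Q)$; as $g\ne 0$ in $\mathbb Q\langle X\rangle$, the algebra $A$ is PI over $\mathbb Q$.

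It then remains to invoke the established asymptotics over $\mathbb Q$. By the Giambruno--Zaicev solution of the analog of Amitsur's conjecture~\cite{ZaiGia}, $\PIexp(A):=\lim_{n\to\infty}\sqrt[n]{c_n(A,\mathbb Q)}$ exists and equals a non-negative integer $d$; if $d=0$ then $A$ is nilpotent, i.e.\ $c_n(A,\mathbb Q)=0$ for all large $n$, and if $d\geqslant 1$ then $C_1 n^{q_1}d^n\leqslant c_n(A,\mathbb Q)\leqslant C_2 n^{q_2}d^n$ for all large $n$, for suitable $C_i>0$ and $q_i\in\mathbb R$ (for $d=1$ this is Kemer's description of polynomial codimension growth, for $d\geqslant 2$ the two-sided Giambruno--Zaicev bound). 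Via $c_n(R,0)=c_n(A,\mathbb Q)$ this gives the dichotomy of (1), and in both cases $\lim_n\sqrt[n]{c_n(R,0)}=d\in\mathbb Z_+$. For (2), if $1\in R$ then $A$ is a unitary PI-algebra over $\mathbb Q$, so by the analog of Regev's conjecture for unitary PI-algebras over a field of characteristic zero (Berele--Regev; see also~\cite{ZaiGia}) there are $C>0$, $d=\PIexp(A)\in\mathbb N$ and $q\in\mathbb Z$ with $c_n(A,\mathbb Q)\sim Cn^{q/2}d^n$; translating once more gives (2).

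The only step that is not bookkeeping is the second paragraph. Over an arbitrary ground ring, multihomogeneous components and linearisations of identities need not be identities, and it is precisely the torsion-freeness of $R$ (used here beyond its role of forcing $\Tor R=0$ in Theorem~\ref{TheoremFCodimofaRing}) that makes the Vandermonde argument work and lets the PI-ring hypothesis on $R$ descend to a PI-algebra hypothesis on $A$. Once that is secured, the statement is a direct citation of the asymptotic theory of codimensions over $\mathbb Q$.
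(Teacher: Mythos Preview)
Your approach is exactly the paper's: apply Theorem~\ref{TheoremFCodimofaRing} with $F=\mathbb Q$ (using $\Tor R=0$) to obtain $c_n(R,0)=c_n(R\otimes_{\mathbb Z}\mathbb Q,\mathbb Q)$, then invoke the characteristic-zero asymptotic theorems; the paper's proof is the single sentence ``apply \cite[Theorem~6.5.2]{ZaiGia} and \cite[Theorem~4.2.2]{BereleInfDim}'' and does not pause to check that $A=R\otimes_{\mathbb Z}\mathbb Q$ is PI, so your second paragraph is a welcome addition (in fact a nonzero multihomogeneous component already suffices, since every element of $A$ has the form $\tfrac1N(r\otimes1)$ with $r\in R$, $N\in\mathbb N$). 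One caution on the citation for (2): $A$ need not be finitely generated as a $\mathbb Q$-algebra, so the asymptotic result you quote must apply to arbitrary unitary PI-algebras, which is precisely why the paper appeals to Berele's extension \cite{BereleInfDim} rather than the original Berele--Regev theorem for finitely generated algebras.
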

\begin{proof}
By Theorem~\ref{TheoremFCodimofaRing}, $c_n(R, 0) = c_n(R \mathbin{\otimes_{\mathbb Z}} \mathbb Q, \mathbb Q)$. Now we apply \cite[Theorem~6.5.2]{ZaiGia} and~\cite[Theorem~4.2.2]{BereleInfDim}.
\end{proof}
\begin{remark}
If $R$ is a torsion-free ring, then $c_n(R, q)=0$ for all $q\ne 0$
since $f \in \Id(R, \mathbb Z)$
for all $f\in\mathbb Z\langle X \rangle$
such that $mf \in \Id(R, \mathbb Z)$ for some $m\in\mathbb N$.
\end{remark}

We conclude the section with an example.

\begin{example}
Let $R = \bigoplus_{k=1}^\infty \mathbb Z_{2^k}$.
Then $c_n(R, 0) = 1$ and $c_n(R, q) = 0$
for all $q\ne 0$ and $n\in\mathbb N$.
Although $mR \ne 0$ for all $m\in \mathbb N$, $R \mathbin{\otimes_{\mathbb Z}} \mathbb Q = 0$ and $c_n(R \mathbin{\otimes_{\mathbb Z}} \mathbb Q, \mathbb Q)=0$
for all $n\in\mathbb N$.
\end{example}
\begin{proof}
The ring $R$ is commutative. Hence all monomials from $P_n(\mathbb Z)$
are proportional to $x_1 x_2 \dots x_n$ modulo $\Id(R, \mathbb Z)$.
However, $m x_1 x_2 \dots x_n \notin \Id(R, \mathbb Z)$
for all $m\in\mathbb N$. (It is sufficient to substitute
$x_1 = x_2 = \dots = x_n = \bar 1_{\mathbb Z_{2^k}}$ for
$2^k > m$.) Thus $\frac{P_n(\mathbb Z)}{P_n(\mathbb Z)\cap \Id(R, \mathbb Z)}
\cong \mathbb Z$ and $c_n(R, 0) = 1$ and $c_n(R, q) = 0$
for all $q\ne 0$ and $n\in\mathbb N$.
However $a \otimes q = 2^k a \otimes \frac{q}{2^k}$
for all $a\in R$, $q\in\mathbb Q$, and $k\in \mathbb N$.
Choosing $k$ sufficiently large, we get $a \otimes q = 2^k a \otimes \frac{q}{2^k} = 0$. Thus $R \mathbin{\otimes_{\mathbb Z}} \mathbb Q = 0$ and $c_n(R \mathbin{\otimes_{\mathbb Z}} \mathbb Q, \mathbb Q)=0$
for all $n\in\mathbb N$.
\end{proof}

\section{Relation between $\mathbb ZS_n$-modules of proper and ordinary polynomial functions}

First, we describe the relation between proper and ordinary codimensions.

\begin{theorem}\label{TheoremCodimProperAndOrdinary}
Let $R$ be a unitary ring. Then
$c_n(R, q) = \sum_{j=0}^n \tbinom{n}{j}\gamma_j(R, q)$
for every $n\in \mathbb N$ and $q \in \lbrace p^k \mid p,k\in\mathbb N,\ p \text{ is prime } \rbrace \cup \lbrace 0\rbrace$.
\end{theorem}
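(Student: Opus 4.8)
The plan is to mimic the classical proof over a field of characteristic zero (the one behind \cite[Theorem~4.3.12 (ii)]{DrenKurs}), but to work directly with $\mathbb Z$-coefficients and to track torsion carefully rather than dimensions. The key structural fact I would establish first is an analog of the decomposition $P_n = \bigoplus_{k} (\text{something})$: for a unitary ring $R$, every multilinear polynomial function in $n$ variables can be rewritten, modulo $\Id(R,\mathbb Z)$, as a $\mathbb Z$-linear combination of elements of the form
\[
x_{i_1} x_{i_2} \cdots x_{i_{n-j}} \cdot w(x_{i_{n-j+1}},\dots,x_{i_n}),
\]
where $i_1 < i_2 < \cdots < i_{n-j}$ are the "free" variables written in increasing order on the left, $\{i_{n-j+1},\dots,i_n\}$ is the complementary set, and $w$ is a proper multilinear polynomial in those $j$ variables. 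This is the usual commutator-collection process: using the identity $yx = xy + [y,x]$ repeatedly one moves the lower-indexed variables to the front, the error terms being products that contain at least one commutator; iterating and using that $R$ has a $1$ (so the empty product makes sense, giving the $j=0$ term) yields the claimed spanning set. The crucial point for the $\mathbb Z$-statement is that every step of this rewriting uses only \emph{integer} coefficients (in fact coefficients $\pm 1$), so it is a manipulation inside $P_n(\mathbb Z)$, valid over $\mathbb Z$ verbatim.

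Next I would show this spanning set is in the appropriate sense "free modulo identities with the right multiplicities", i.e. that it realizes the direct sum decomposition as $\mathbb ZS_n$-modules
\[
\frac{P_n(\mathbb Z)}{P_n(\mathbb Z)\cap\Id(R,\mathbb Z)} \;\cong\; \bigoplus_{j=0}^{n} \Bigl(\text{the span of the }j\text{-blocks}\Bigr),
\]
and that the $j$-block is isomorphic, as an abelian group, to $\binom{n}{j}$ copies of $\frac{\Gamma_j(\mathbb Z)}{\Gamma_j(\mathbb Z)\cap\Id(R,\mathbb Z)}$: the $\binom{n}{j}$ accounts for the choice of which $j$ variables are the proper ones, and for each such choice the block is a copy of the proper module on those $j$ variables. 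To make this precise I would set up the $\mathbb Z$-linear map $P_n(\mathbb Z) \to \bigoplus_{|S|=j} \Gamma_{S}(\mathbb Z)$ that reads off, from the normalized form above, the proper part attached to each subset $S$, show it is well-defined modulo identities, and show it splits (the inclusion of each $\Gamma_S$ as the corresponding block of left-ordered-free-times-$w$ is a one-sided inverse). Because the normalization is over $\mathbb Z$ with unit coefficients, there is no denominator obstruction: the splitting is honest over $\mathbb Z$, so it respects not just ranks but the full torsion structure. Taking the $\mathbb Z_q$-multiplicities ($q=p^k$) and free ranks ($q=0$) on both sides of this isomorphism then gives $c_n(R,q)=\sum_{j=0}^n\binom{n}{j}\gamma_j(R,q)$ for every $q$ at once.

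The main obstacle is the second step — verifying that the spanning set is "as free as possible", i.e. that no extra relations modulo $\Id(R,\mathbb Z)$ are created by the collection process beyond those already present among proper polynomials. Over a field this is handled by a dimension count combined with the fact that the $P_n = \bigoplus \binom{n}{j}\Gamma_j$ decomposition holds in the \emph{absolutely free} unitary algebra (i.e. before quotienting by any identities), so the quotient decomposes automatically. The honest fix here is the same: first prove the decomposition $P_n(\mathbb Z) \cong \bigoplus_{j=0}^n \binom{n}{j}\,\Gamma_j(\mathbb Z)$ as $\mathbb ZS_n$-modules in the free unitary ring $\mathbb Z\langle X\rangle^{\#}$ (adjoining $1$), where it is a purely combinatorial identity about the Poincaré–Birkhoff–Witt-type basis — monomials $x_{i_1}\cdots x_{i_{n-j}} w$ with $i_1<\cdots<i_{n-j}$ and $w$ ranging over a $\mathbb Z$-basis of the multilinear proper component — and then quotient by $\Id(R,\mathbb Z)$ on both sides, using that $\Id(R,\mathbb Z)\cap P_n(\mathbb Z)$ is a $T$-ideal so it respects the decomposition into the $j$-blocks (a commutator substitution stays a commutator). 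That reduces everything to the free-ring statement, whose proof is the commutator collection above plus a linear-independence check that is itself independent of $R$; I would either cite the PBW theorem for the free Lie ring over $\mathbb Z$ or give the short direct argument that the collected monomials are $\mathbb Z$-linearly independent in $\mathbb Z\langle X\rangle^{\#}$. Everything else — passing to $S_n$-equivariance, reading off codimensions — is bookkeeping.
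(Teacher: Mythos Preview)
Your overall strategy matches the paper's exactly: establish the free-ring decomposition
\[
P_n(\mathbb Z)=\bigoplus_{k=0}^{n}\ \bigoplus_{i_1<\dots<i_k} x_{i_1}\cdots x_{i_k}\,\sigma_{i_1,\dots,i_k}\Gamma_{n-k}(\mathbb Z)
\]
by commutator collection (or PBW over $\mathbb Z$), then show that $P_n(\mathbb Z)\cap\Id(R,\mathbb Z)$ decomposes compatibly, and read off the $c_n(R,q)$'s.

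There is, however, a genuine gap at the point you yourself flag as the main obstacle. Your proposed justification --- ``$\Id(R,\mathbb Z)\cap P_n(\mathbb Z)$ is a $T$-ideal so it respects the decomposition into the $j$-blocks (a commutator substitution stays a commutator)'' --- does not work. The projections of $P_n(\mathbb Z)$ onto the individual blocks are \emph{not} induced by endomorphisms of $\mathbb Z\langle X\rangle$, so the $T$-ideal property gives you nothing here; and ``commutators stay commutators under substitution'' is the wrong direction (you need to project \emph{onto} a block, not map one block into another). A $T$-ideal intersected with a direct sum has no reason to split along that sum in general.

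The fix is precisely what the paper does, and it is where the hypothesis $1_R\in R$ is actually used (not merely to make sense of the $j=0$ term). Given $f=\sum_J x_J\, w_J\in\Id(R,\mathbb Z)$ in normal form, choose $J$ of maximal cardinality among those with $w_J\ne 0$ and substitute $x_j=1_R$ for $j\in J$, arbitrary elements of $R$ for the remaining variables. Any term indexed by $J'$ with $J\not\subseteq J'$ dies because some $1_R$ lands inside a commutator of $w_{J'}$; any term with $J'\supsetneq J$ is absent by maximality. Only $w_J$ survives, so $w_J\in\Id(R,\mathbb Z)$, and downward induction on $|J|$ finishes. This is the content of the paper's equation for $P_n(\mathbb Z)\cap\Id(R,\mathbb Z)$; once you have it, the rest of your sketch goes through verbatim.
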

\begin{proof}
First, we notice that \begin{equation}\label{EqPnDecomp}P_n(\mathbb Z) = \bigoplus_{k=0}^n \bigoplus_{1\leqslant i_1 <
i_2 < \dots < i_k \leqslant n} x_{i_1} x_{i_2}\dots x_{i_k} \, \sigma_{i_1, \dots, i_k} 
\Gamma_{n-k}(\mathbb Z) \text{ (direct sum of $\mathbb Z$-modules) }\end{equation}
where $\Gamma_0(\mathbb Z) := \mathbb Z$ and $\sigma_{i_1, \dots, i_k} \in S_n$ is any permutation such that $\sigma((n-k)+j)=i_j$ for all $1\leqslant j\leqslant k$.

One way to prove~(\ref{EqPnDecomp}) is to use the Poincar\'e~--- Birkhoff~--- Witt theorem
for Lie algebras over rings~\cite[Theorem 2.5.3]{Bahturin}.

Another way is to show this explicitly in the spirit of Specht~\cite{Specht}. Using the equalities $yx = [y,x]+xy$ and $[\dots,\dots]x=x[\dots,\dots]+[[\dots, \dots],x]$,
we can present every polynomial from $P_n(\mathbb Z)$ as a linear combination of
polynomials $x_{i_1}x_{i_2}\dots x_{i_k}\, f$ where $1\leqslant i_1 <
i_2 < \dots < i_k \leqslant n$ and $f$ is a proper multilinear
polynomial of degree $(n-k)$ in the variables from the set $\lbrace x_1, x_2, \dots, x_n \rbrace \backslash 
 \lbrace x_{i_1}, x_{i_2}, \dots, x_{i_k} \rbrace$.
 In other words, $f \in \sigma_{i_1, \dots, i_k} \Gamma_{n-k}(\mathbb Z)$.
In order to check that the sum in~(\ref{EqPnDecomp}) is direct, we consider
a linear combination of $x_{i_1} x_{i_2}\dots x_{i_k}
\sigma_{i_1, \dots, i_k} f $ where $f\in \Gamma_{n-k}(\mathbb Z)$,
for different $k$ and $i_j$ and choose the term $g := x_{i_1} x_{i_2}\dots x_{i_k} \sigma_{i_1, \dots, i_k} f $ with the greatest $k$
among the terms with a nonzero coefficient.
Then we substitute $x_{i_1}=x_{i_2}=
\dots = x_{i_k}=1$ and $x_j = x_j$ for the rest of the variables.
(We assume that we are working in the free ring with $1$
on the set $X=\lbrace x_1, x_2, \dots \rbrace$.)
All the other terms vanish and we get $f=0$.
Therefore, the sum is direct and~(\ref{EqPnDecomp}) holds.

Substituting $x_{i_1}=x_{i_2}=
\dots = x_{i_k}=1_R$
and arbitrary elements of $R$
for the other $x_j$, we obtain 
\begin{equation}\begin{split}\label{EqPnIdDecomp}P_n(\mathbb Z) \cap \Id(R,\mathbb Z)= 
  (\ch R) \mathbb Z x_1 x_2 \dots x_n\
\oplus \\ \bigoplus_{k=0}^{n-2} \bigoplus_{1\leqslant i_1 <
i_2 < \dots < i_k \leqslant n} \ x_{i_1} x_{i_2}\dots x_{i_k} \, \sigma_{i_1, \dots, i_k} 
\bigl(\Id(R,\mathbb Z) \cap \Gamma_{n-k}(\mathbb Z)\bigr).\end{split}\end{equation}

Combining~(\ref{EqPnDecomp}) and~(\ref{EqPnIdDecomp}), we get
$$\frac{P_n(\mathbb Z)}{P_n(\mathbb Z) \cap \Id(R, \mathbb Z)} \cong \bigoplus_{k=0}^n \bigoplus_{1\leqslant i_1 <
i_2 < \dots < i_k \leqslant n}  
\frac{\Gamma_{n-k}(\mathbb Z)}{\Gamma_{n-k}(\mathbb Z) \cap \Id(R, \mathbb Z)}
 $$
(direct sum of $\mathbb Z$-modules)
for an arbitrary ring $R$ with the unit $1_R$.
(We define $\frac{\Gamma_0(\mathbb Z)}{\Gamma_0(\mathbb Z) \cap \Id(R,\mathbb Z)} := \langle 1_R \rangle_{\mathbb Z} \subseteq R$.)
Calculating the number of the components, we obtain Theorem~\ref{TheoremCodimProperAndOrdinary}.
\end{proof}

\begin{corollary}
Let $R$ be a unitary ring.
 Then all multilinear polynomial identities
of $R$ are consequences of proper multilinear polynomial identities of $R$
and the identity $(\ch R) x \equiv 0$.
\end{corollary}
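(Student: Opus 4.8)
The plan is to read the statement directly off the direct-sum decomposition~(\ref{EqPnIdDecomp}) obtained inside the proof of Theorem~\ref{TheoremCodimProperAndOrdinary}. Let $T$ be the $T$-ideal of $\mathbb Z\langle X\rangle$ generated by all proper multilinear polynomial identities of $R$ together with the polynomial $(\ch R)x_1$. Every generator lies in $\Id(R,\mathbb Z)$ and $\Id(R,\mathbb Z)$ is a $T$-ideal, so $T\subseteq \Id(R,\mathbb Z)$; it then remains to prove the reverse inclusion on multilinear elements, i.e. that $P_n(\mathbb Z)\cap\Id(R,\mathbb Z)\subseteq T$ for every $n\in\mathbb N$ (a multilinear polynomial identity of degree $n$ is by definition an element of $P_n(\mathbb Z)\cap\Id(R,\mathbb Z)$).

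Fix $n$ and take $f\in P_n(\mathbb Z)\cap\Id(R,\mathbb Z)$. By~(\ref{EqPnIdDecomp}), $f$ is the sum of an element of $(\ch R)\mathbb Z\,x_1x_2\cdots x_n$ and of finitely many elements of the form $x_{i_1}x_{i_2}\cdots x_{i_k}\,\sigma_{i_1,\dots,i_k}\,g$, where $0\leqslant k\leqslant n-2$, $1\leqslant i_1<\cdots<i_k\leqslant n$, and $g\in\Gamma_{n-k}(\mathbb Z)\cap\Id(R,\mathbb Z)$. A multiple of $(\ch R)x_1x_2\cdots x_n$ is a consequence of $(\ch R)x_1$, being obtained from it by right multiplication by the monomial $x_2\cdots x_n$ (and by a scalar). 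For a summand of the second kind, $\sigma_{i_1,\dots,i_k}g$ is obtained from $g$ by a permutation of the variables; since $\Id(R,\mathbb Z)$ is a $T$-ideal it is stable under such relabellings, so $\sigma_{i_1,\dots,i_k}g$ is again a proper multilinear polynomial identity of $R$, involving only variables outside $\{x_{i_1},\dots,x_{i_k}\}$. Left multiplication by the monomial $x_{i_1}x_{i_2}\cdots x_{i_k}$ then keeps the element inside the $T$-ideal generated by $\sigma_{i_1,\dots,i_k}g$, hence inside $T$. Summing up, $f\in T$, which is what we wanted.

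The proof is little more than a repackaging of~(\ref{EqPnIdDecomp}), so I do not expect a serious obstacle. The only points requiring a word of care are: that a relabelling of variables sends proper multilinear identities to proper multilinear identities (immediate from $\Id(R,\mathbb Z)$ being a $T$-ideal); that left multiplication by a monomial is a legitimate operation when forming consequences inside a $T$-ideal; and the separate treatment of the top-degree term coming from $\Gamma_0(\mathbb Z)=\mathbb Z$, which is exactly what forces us to add the generator $(\ch R)x_1$. As in Theorem~\ref{TheoremCodimProperAndOrdinary}, the hypothesis that $R$ is unitary enters through the substitutions $x_{i_j}\mapsto 1_R$ underlying~(\ref{EqPnIdDecomp}).
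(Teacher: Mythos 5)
Your proof is correct and takes exactly the paper's approach: the paper simply cites~(\ref{EqPnIdDecomp}), and you spell out the routine verification that each summand in that decomposition lies in the $T$-ideal generated by the proper multilinear identities and $(\ch R)x$. Nothing to add.
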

\begin{proof}
This follows from~(\ref{EqPnIdDecomp}).
\end{proof}

\begin{corollary}
Let $R$ be a unitary ring and let the sequence $\bigl(c_n(R, q)\bigr)_{n=1}^\infty$
be polynomially bounded for some $q$.
Then $c_n(R, q)$ is a polynomial in $n\in\mathbb N$.
\end{corollary}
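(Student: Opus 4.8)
The plan is to combine Theorem~\ref{TheoremCodimProperAndOrdinary} with the elementary fact that a sequence which is polynomially bounded and expressible as a binomial transform of a nonnegative sequence must itself be polynomial. By Theorem~\ref{TheoremCodimProperAndOrdinary} we have
\[
c_n(R, q) = \sum_{j=0}^n \binom{n}{j}\gamma_j(R, q)
\]
for every $n\in\mathbb N$. Since all the $\gamma_j(R,q)$ are nonnegative integers and every binomial coefficient $\binom{n}{j}$ is nonnegative, the partial sums are monotone: in particular $c_n(R,q)\geqslant \binom{n}{j}\gamma_j(R,q)$ for each fixed $j\leqslant n$. If $\gamma_j(R,q)\neq 0$ for infinitely many $j$, then picking such a $j$ arbitrarily large and noting $\binom{2j}{j}\geqslant 2^j$ would force $c_{2j}(R,q)\geqslant 2^j$, contradicting polynomial boundedness. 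Hence there exists $N\in\mathbb N$ with $\gamma_j(R,q)=0$ for all $j>N$.

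With this finiteness in hand, for all $n\geqslant N$ the formula becomes
\[
c_n(R,q)=\sum_{j=0}^{N}\binom{n}{j}\gamma_j(R,q),
\]
which is a fixed finite $\mathbb Z$-linear combination of the polynomials $\binom{n}{j}=\frac{n(n-1)\cdots(n-j+1)}{j!}$ in $n$, hence a polynomial in $n$ for $n\geqslant N$. The only remaining point is to upgrade ``polynomial for $n\geqslant N$'' to ``polynomial for all $n\in\mathbb N$''. But the same formula $\sum_{j=0}^{N}\binom{n}{j}\gamma_j(R,q)$ is a single polynomial $P(n)$ defined for every integer $n$, and for $0\leqslant n< N$ the terms with $j>n$ vanish because $\binom{n}{j}=0$ there, so $c_n(R,q)=\sum_{j=0}^{n}\binom{n}{j}\gamma_j(R,q)=P(n)$ on that range as well. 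Therefore $c_n(R,q)=P(n)$ for all $n\in\mathbb N$.

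The only genuinely nontrivial step is the first one: deducing from polynomial boundedness of $\bigl(c_n(R,q)\bigr)$ that only finitely many proper codimensions $\gamma_j(R,q)$ are nonzero. Everything after that is the observation that $\binom{n}{j}$ is a polynomial in $n$ together with bookkeeping about the range $n<N$. I expect no real obstacle; the nonnegativity of the $\gamma_j(R,q)$, which is automatic from their definition as the number of cyclic summands of a given type, is exactly what makes the estimate $c_n(R,q)\geqslant \binom{n}{j}\gamma_j(R,q)$ legitimate and drives the whole argument.
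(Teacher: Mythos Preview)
Your proof is correct and follows essentially the same approach as the paper: use Theorem~\ref{TheoremCodimProperAndOrdinary} together with the nonnegativity of the $\gamma_j(R,q)$ to conclude that polynomial boundedness forces $\gamma_j(R,q)=0$ for all sufficiently large $j$, then apply Theorem~\ref{TheoremCodimProperAndOrdinary} once more. The paper's proof is simply a two-sentence sketch of this, whereas you have spelled out the growth estimate (via $\binom{2j}{j}\geqslant 2^j$) and the bookkeeping for small $n$ explicitly.
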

\begin{proof}
If the sequence $\bigl(c_n(R, q)\bigr)_{n=1}^\infty$
is polynomially bounded, then by Theorem~\ref{TheoremCodimProperAndOrdinary} there exists $j_0 \in\mathbb N$
such that $\gamma_j(R, q)=0$ for all $j\geqslant j_0$. Now we apply Theorem~\ref{TheoremCodimProperAndOrdinary} once again.
\end{proof}

If $H$ is a subgroup of a group $G$
and $M$ is a left $\mathbb ZH$-module, then $M \uparrow G := \mathbb ZG \mathbin{\otimes_{\mathbb ZH}}M$.
The $G$-action on $\mathbb ZG \mathbin{\otimes_{\mathbb ZH}}
M$ is induced as follows: $g_0(g\otimes a) := g_0g \otimes a$
for $a\in M$, $g,g_0 \in G$.

Now we prove an analog of Drensky's theorem~\cite[Theorem 12.5.4]{DrenKurs}:
\begin{theorem}\label{TheoremZSnOrdinaryProper}
Let $R$ be a unitary ring, $\ch R = \ell$, $\ell \in\mathbb Z_+$. Consider for every $n\in \mathbb N$
the series of $\mathbb Z S_n$-submodules
$$M_0 :=\frac{P_n(\mathbb Z)}{P_n(\mathbb Z) \cap \Id(R, \mathbb Z)} 
\supsetneqq M_2 \supseteq M_3 \supseteq \dots \supseteq M_n \cong \frac{\Gamma_n(\mathbb Z)}{\Gamma_n(\mathbb Z)
 \cap \Id(R, \mathbb Z)}$$
 where each $M_k$ is the image of $\bigoplus_{t=k}^n\mathbb Z S_n (x_1\dots x_{n-t} \Gamma_t(\mathbb Z))$ and
 $M_{n+1}:=0$.
Then $M_0/M_2 \cong \mathbb Z_\ell$ (trivial $S_n$-action),
$$M_t/M_{t+1}\cong \left(\frac{\Gamma_t(\mathbb Z)}{\Gamma_t(\mathbb Z)
 \cap \Id(R, \mathbb Z)} \mathbin{\otimes_{\mathbb Z}} \mathbb Z\right) \uparrow S_n := \mathbb ZS_n \mathbin{\otimes_{\mathbb Z(S_t\times S_{n-t})}} \left(
 \frac{\Gamma_t(\mathbb Z)}{\Gamma_t(\mathbb Z) \cap \Id(R, \mathbb Z)} \otimes_{\mathbb Z} \mathbb Z\right)$$ for all $2\leqslant t \leqslant n$ where $S_{n-t}$ is permuting $x_{t+1}, \dots, x_n$
 and $\mathbb Z$ is a trivial $\mathbb ZS_{n-t}$-module.
\end{theorem}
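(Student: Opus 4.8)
The plan is to transplant Drensky's argument from the field case to $\mathbb ZS_n$-modules, using the decomposition~(\ref{EqPnDecomp}) already established in the proof of Theorem~\ref{TheoremCodimProperAndOrdinary} as the combinatorial backbone. First I would record that each $M_k$, being the image of $\bigoplus_{t=k}^n \mathbb ZS_n\bigl(x_1\cdots x_{n-t}\,\Gamma_t(\mathbb Z)\bigr)$ in $M_0$, is indeed a $\mathbb ZS_n$-submodule (the generating set is closed under the $S_n$-action up to the relations in $\Id(R,\mathbb Z)$), and that $M_{n}$ is the image of $\mathbb ZS_n(\Gamma_n(\mathbb Z))=\Gamma_n(\mathbb Z)$, which is already $S_n$-stable, so $M_n\cong \Gamma_n(\mathbb Z)/(\Gamma_n(\mathbb Z)\cap\Id(R,\mathbb Z))$. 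The claim $M_0/M_2\cong\mathbb Z_\ell$ is then immediate from~(\ref{EqPnDecomp}) together with~(\ref{EqPnIdDecomp}): the only summand of $P_n(\mathbb Z)$ not contained in the span of the $x_{i_1}\cdots x_{i_k}\sigma\Gamma_{n-k}(\mathbb Z)$ with $n-k\geqslant 2$ is $\mathbb Z\,x_1x_2\cdots x_n$, which contributes exactly $\langle 1_R\rangle_{\mathbb Z}\cong\mathbb Z_\ell$ with trivial $S_n$-action.

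The heart of the argument is the identification of $M_t/M_{t+1}$ as an induced module. Fix $2\leqslant t\leqslant n$. Let $N_t$ denote the $\mathbb Z$-submodule of $P_n(\mathbb Z)$ spanned by all $x_{i_1}\cdots x_{i_{n-t}}\sigma_{i_1,\dots,i_{n-t}}\Gamma_t(\mathbb Z)$ over all $(n-t)$-subsets; by~(\ref{EqPnDecomp}) this is a direct summand, and I would show $M_t$ is the image of $\bigoplus_{s\geqslant t}N_s$ while $M_{t+1}$ is the image of $\bigoplus_{s\geqslant t+1}N_s$, so that $M_t/M_{t+1}$ is a quotient of the image of $N_t$. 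The subspace $x_{n-t+1}\cdots$ — more precisely the "standard" piece $x_1\cdots x_{n-t}\,\Gamma_t(\mathbb Z)$ in the variables $x_{n-t+1},\dots,x_n$ — is stabilized, modulo $M_{t+1}$, by the Young subgroup $S_t\times S_{n-t}$ (with $S_{n-t}$ permuting $x_1,\dots,x_{n-t}$, which after relabelling to match the statement becomes $S_{n-t}$ on $x_{t+1},\dots,x_n$), and $S_{n-t}$ acts trivially on it modulo $M_{t+1}$ because permuting the commuting leading variables $x_1\cdots x_{n-t}$ changes the monomial only by terms that, after expanding via $yx=[y,x]+xy$, lie in higher $N_s$. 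Thus modulo $M_{t+1}$ this standard piece carries an action of $S_t\times S_{n-t}$ isomorphic to $\bigl(\Gamma_t(\mathbb Z)/(\Gamma_t(\mathbb Z)\cap\Id(R,\mathbb Z))\bigr)\otimes_{\mathbb Z}\mathbb Z$. Then I would check that the $S_n$-translates of this piece under coset representatives of $S_n/(S_t\times S_{n-t})$ are exactly the pieces $x_{i_1}\cdots x_{i_{n-t}}\sigma\Gamma_t(\mathbb Z)$ indexed by the $(n-t)$-subsets, and that their images in $M_t/M_{t+1}$ sum to $M_t/M_{t+1}$ and are permuted freely, giving the universal property of the induced module $\mathbb ZS_n\otimes_{\mathbb Z(S_t\times S_{n-t})}(-)$.

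The main obstacle I anticipate is the compatibility bookkeeping: verifying that passing to the quotient by $M_{t+1}$ genuinely kills all the "error terms" produced when an $S_n$-element is rewritten in the normal form of~(\ref{EqPnDecomp}), so that the naive $\mathbb Z(S_t\times S_{n-t})$-module structure on the standard piece is well defined modulo $M_{t+1}$, and simultaneously that no extra relations among the translates are introduced — i.e. that $N_t\to M_t/M_{t+1}$ is injective on the relevant summand so the induced module is obtained on the nose rather than as a proper quotient. This is exactly where~(\ref{EqPnDecomp}) being a \emph{direct} sum of $\mathbb Z$-modules, together with~(\ref{EqPnIdDecomp}) describing $\Id(R,\mathbb Z)\cap P_n(\mathbb Z)$ summand-by-summand, does the work: the directness ensures the image of $N_t$ in $M_0/(\text{image of }\bigoplus_{s>t}N_s)$ is $\bigoplus_{(n-t)\text{-subsets}}\Gamma_t(\mathbb Z)/(\Gamma_t(\mathbb Z)\cap\Id(R,\mathbb Z))$ with no collapsing, and the permutation action on the indexing subsets is manifestly the one underlying $\operatorname{Ind}_{S_t\times S_{n-t}}^{S_n}$. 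Once these two facts are in place, the isomorphism $M_t/M_{t+1}\cong \bigl(\Gamma_t(\mathbb Z)/(\Gamma_t(\mathbb Z)\cap\Id(R,\mathbb Z))\otimes_{\mathbb Z}\mathbb Z\bigr)\uparrow S_n$ follows formally, and the argument is, as the paper remarks for Specht modules, "analogous to the case of fields."
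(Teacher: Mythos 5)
Your proposal is correct and rests on the same key insight the paper uses: the substitution $x_{i_1}=\dots=x_{i_{n-t}}=1_R$ (encapsulated in the decompositions~(\ref{EqPnDecomp}) and~(\ref{EqPnIdDecomp})) isolates the $\Gamma_t$-components and shows $M_t/M_{t+1}$ is a direct sum of copies of $\Gamma_t(\mathbb Z)/(\Gamma_t(\mathbb Z)\cap\Id(R,\mathbb Z))$ indexed by $(n-t)$-subsets, permuted by $S_n$. The only cosmetic difference is that you recognize the induced module through its explicit direct-sum model over coset representatives, whereas the paper verifies the universal property by constructing a balanced bilinear map $\varphi$ and showing every balanced $\psi$ factors through it; both routes are standard and rely on the same substitution argument for well-definedness.
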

\begin{proof}
First we notice that $M_0/M_2$ is generated by the image of $x_1 x_2 \dots x_n$.
Suppose the image of  $kx_1 x_2 \dots x_n$ belongs to $M_2$ for some $k\in\mathbb N$.
All the polynomials in $M_2$ vanish under the substitution $x_1=\dots = x_n = 1_R$
since each of them contain at least one commutator. Hence we get $k 1_R = 0$,
$\ell \mid k$, and $M_0/M_2 \cong \mathbb Z_\ell$.

Note that $\frac{\Gamma_t(\mathbb Z)}{\Gamma_t(\mathbb Z) \cap \Id(R, \mathbb Z)} \otimes_{\mathbb Z} \mathbb Z
\cong \frac{\Gamma_t(\mathbb Z)}{\Gamma_t(\mathbb Z) \cap \Id(R, \mathbb Z)}$ where $S_{n-t}$ acts trivially.
Consider the bilinear map $$\varphi \colon \mathbb ZS_n \times \frac{\Gamma_t(\mathbb Z)}{\Gamma_t(\mathbb Z) \cap \Id(R, \mathbb Z)} \to M_t/M_{t+1}$$
defined by $\varphi(\sigma, f)=x_{\sigma(t+1)}x_{\sigma(t+2)}
\dots x_{\sigma(n)}(\sigma f)$ for $\sigma \in S_n$, $f \in \frac{\Gamma_t(\mathbb Z)}{\Gamma_t(\mathbb Z) \cap \Id(R, \mathbb Z)}$.
Note that $\varphi(\sigma\pi, f)=\varphi(\sigma, \pi f)$ for all $\pi \in S_t\times S_{n-t}$
and $M_t/M_{t+1}$ is generated by all $\varphi(\sigma, f)$ for $\sigma \in S_n$
and $f\in \Gamma_t(\mathbb Z)$.

Suppose $L$ is an Abelian group and $\psi \colon \mathbb ZS_n \times \frac{\Gamma_t(\mathbb Z)}{\Gamma_t(\mathbb Z) \cap \Id(R, \mathbb Z)} \to L$ is a $\mathbb Z$-bilinear
map and $\psi(\sigma\pi, f)=\psi(\sigma, \pi f)$ for all $\pi \in S_t\times S_{n-t}$.
First we define $\bar \psi \colon M_t \to L$ on the elements that generate
 $M_t$ modulo $M_{t+1}$:
 $$\bar \psi(x_{i_1} x_{i_2}\dots x_{i_{n-t}}
f)=\psi(\sigma, \sigma^{-1} f)$$ where $\sigma \in S_n$ and $\sigma^{-1}f \in \frac{\Gamma_t(\mathbb Z)}{\Gamma_t(\mathbb Z) \cap \Id(R, \mathbb Z)}$ (e.g. we can take $\sigma(k) = i_k$ for $1 \leqslant k \leqslant n-t$). Clearly, $\bar\psi(x_{i_1} x_{i_2}\dots x_{i_{n-t}}f)$ does not depend on the choice
of $\sigma$.
 Suppose the image $\bar f_0$ of a polynomial $$f_0 = \sum_{i_1 < \dots < i_{n-t}} x_{i_1} x_{i_2}\dots x_{i_{n-t}} f_{i_1, \dots, i_{n-t}}$$ belongs to $M_{t+1}$ for some $f_{i_1, \dots, i_{n-t}}\in\Gamma_t(\mathbb Z)$. Substituting $$x_{i_1}=x_{i_2}=\dots = x_{i_{n-t}}=1_R$$
 and arbitrary values for the other $x_j$, we get zero for every $i_1 < \dots < i_{n-t}$.
Hence $f_{i_1, \dots, i_{n-t}} \in \Id(R, \mathbb Z)$ and $\bar\psi(\bar f_0)=0$.
Thus  we can define $\bar \psi$ to be zero on $M_{t+1}$ and  we may assume that $\bar \psi \colon M_t/M_{t+1} \to L$.

Note that $\bar\psi\varphi = \psi$. 
Hence $M_t/M_{t+1} \cong \mathbb ZS_n \mathbin{\otimes_{\mathbb Z(S_t\times S_{n-t})}} \left(
 \frac{\Gamma_t(\mathbb Z)}{\Gamma_t(\mathbb Z) \cap \Id(R, \mathbb Z)} \otimes_{\mathbb Z} \mathbb Z\right)$ (isomorphism of Abelian groups) where $\varphi(\sigma, f) \mapsto \sigma \otimes f$.
 Therefore, this is an isomorphism of $\mathbb ZS_n$-modules too.
\end{proof}

\section{A particular case of the Littlewood~--- Richardson rule}

Let $\mu \vDash n$, $\lambda \vdash n'$, $n'\leqslant n$. Suppose $\lambda_i \leqslant \mu_i$ for all $i\in\mathbb N$. Denote by $M(\mu)$ the free Abelian group generated by all $\mu$-tabloids.
Now we treat $D_\lambda$ as a Young subdiagram in $D_\mu$. Later on we always assume that in a pair
$(\lambda; \mu)$ we have $\lambda_1 = \mu_1$.

 Following~\cite[Definition 17.4]{JamesSymm},
we define a $\mathbb ZS_n$-submodule $S(\lambda, \mu)\subseteq M(\mu)$ where
$$S(\lambda; \mu) := \langle e^{\lambda, \mu}_{T_\mu}  \mid T_\mu \text{ is a tableau of the shape } \mu\rangle_\mathbb Z$$
and $e^{\lambda, \mu}_{T_\mu} := \sum_{\sigma \in C_{T_\lambda}} (\sign \sigma) \sigma [T_\mu]$.
Here $T_\lambda$ is the subtableau of $T_\mu$
defined by the partition $\lambda$ and $C_{T_\lambda} \subseteq S_n$ is the subgroup that leaves the numbers out of $T_\lambda$ invariant and puts every number from each column of $T_\lambda$ to the same column.
By $[T_\mu]$ we denote the tabloid corresponding to $T_\mu$. We assume $S(0;0)=0$
for the zero partitions $0\vdash 0$. Note that $S(\lambda; \lambda) \cong S(\lambda)$.
(The proof is completely analogous to the case when the coefficients are taken from a field.)

Let $F$ be a field and let $M^F(\mu)$ be the vector space over $F$ with the formal basis consisting
of all $\mu$-tabloids. In other words, $M^F(\mu)=M(\mu) \mathbin{\otimes_{\mathbb Z}}
F$. We define $S^F(\lambda; \mu)$
as the subspace in $M^F(\mu)$ generated by $S(\lambda; \mu) \mathbin \otimes 1$.

\begin{lemma}\label{LemmaNoTorsion} Let $\mu \vDash n$, $\lambda \vdash n'$, $n'\leqslant n$.  Suppose $\lambda_i \leqslant \mu_i$ for all $i\in\mathbb N$. Then $M(\mu)/S(\lambda; \mu)$ has no torsion.
\end{lemma}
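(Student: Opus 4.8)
The plan is to show that $M(\mu)/S(\lambda;\mu)$ embeds into a torsion-free group, or equivalently that whenever $px \in S(\lambda;\mu)$ for a prime $p$ and $x \in M(\mu)$, then already $x \in S(\lambda;\mu)$. The natural strategy is to pass to the field $F = \mathbb{Z}_p$ and compare ranks. Concretely, I would set up the commutative square relating $M(\mu)$, $S(\lambda;\mu)$, and their reductions modulo $p$: there is a surjection $M(\mu) \otimes_{\mathbb Z} \mathbb{Z}_p \to M^{\mathbb{Z}_p}(\mu)$ which is in fact an isomorphism (since $M(\mu)$ is free on the $\mu$-tabloids), and the image of $S(\lambda;\mu) \otimes_{\mathbb Z} \mathbb{Z}_p$ inside $M^{\mathbb{Z}_p}(\mu)$ is exactly $S^{\mathbb{Z}_p}(\lambda;\mu)$ by the definition of the latter.

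First I would argue that $S(\lambda;\mu)$ is a \emph{pure} subgroup of $M(\mu)$ — i.e. $S(\lambda;\mu) = S(\lambda;\mu) \otimes_{\mathbb Z} \mathbb{Q} \cap M(\mu)$ inside $M^{\mathbb Q}(\mu) = M(\mu)\otimes_{\mathbb Z}\mathbb Q$. Since $M(\mu)$ is a finitely generated free abelian group, purity of $S(\lambda;\mu)$ is equivalent to $M(\mu)/S(\lambda;\mu)$ being torsion-free, so this is really a reformulation; the work is to produce a reason. The key computational input I would use is that $S(\lambda;\mu)$ is the $\mathbb Z$-span of the elements $e^{\lambda,\mu}_{T_\mu} = \sum_{\sigma \in C_{T_\lambda}}(\sign\sigma)\sigma[T_\mu]$, and that these same elements (their images $e^{\lambda,\mu}_{T_\mu}\otimes 1$) span $S^{\mathbb Z_p}(\lambda;\mu)$ over $\mathbb Z_p$ and $S^{\mathbb Q}(\lambda;\mu)$ over $\mathbb Q$. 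So I would try to show that a $\mathbb Z$-basis of $S(\lambda;\mu)$ extends to a $\mathbb Z$-basis of $M(\mu)$, which can be done by exhibiting, for each prime $p$, that $\dim_{\mathbb Z_p} S^{\mathbb Z_p}(\lambda;\mu) = \rank_{\mathbb Z} S(\lambda;\mu)$: indeed, one always has $\dim_{\mathbb Z_p} \bigl(S(\lambda;\mu)\otimes\mathbb Z_p\bigr) \geqslant \dim_{\mathbb Z_p} S^{\mathbb Z_p}(\lambda;\mu)$ with equality forcing torsion-freeness of the quotient, and the reverse-type inequality comes from comparing with the rational dimension, which is the rank.

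The concrete route I expect to work: fix a set of tableaux $T_\mu^{(1)}, \dots, T_\mu^{(r)}$ whose associated $e^{\lambda,\mu}$ give a $\mathbb Q$-basis of $S^{\mathbb Q}(\lambda;\mu)$; then $r = \rank_{\mathbb Z}S(\lambda;\mu)$. I would show the corresponding $r \times (\text{number of tabloids})$ integer matrix of coefficients has a nonzero $r\times r$ minor that is \emph{not divisible by $p$}, for every $p$ — equivalently the same family stays linearly independent over $\mathbb Z_p$, because this family is, up to the James machinery in \cite[Chapter 17]{JamesSymm}, governed by a combinatorial "standard-tableau" argument that works over an arbitrary commutative ring. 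In James's setup the relevant standard $e^{\lambda,\mu}$-polytabloids are part of a semistandard basis argument whose unitriangularity with respect to a suitable order on tabloids is integral, giving a submatrix that is unitriangular, hence has determinant $\pm 1$; this is exactly the input that makes the rank the same over $\mathbb Z$, $\mathbb Q$, and every $\mathbb Z_p$. The main obstacle is precisely pinning down this integral unitriangularity — one has to be careful that the leading tabloid of each $e^{\lambda,\mu}_{T_\mu}$ (under the dominance-type order that James uses) occurs with coefficient $\pm 1$ and that distinct standard tableaux give distinct leading tabloids — but once that is in hand, the equality of ranks over all fields forces $M(\mu)/S(\lambda;\mu)$ to be free, in particular torsion-free.
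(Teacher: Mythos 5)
Your approach is exactly the paper's: reduce to the field-independence of $\dim_F S^F(\lambda;\mu)$, then compare ranks over $\mathbb Q$ and $\mathbb Z_p$ (equivalently, pick a basis $a_1,\dots,a_t$ of $M(\mu)$ with $m_1a_1,\dots,m_ka_k$ a basis of $S(\lambda;\mu)$ and force all $m_i=1$). The only difference is cosmetic: the paper cites James's Theorem 17.13(III) directly for the field-independence, whereas you sketch re-deriving it from the standard-basis unitriangularity argument in Chapter 17 of James — which is precisely how that theorem is proved, so the two routes coincide.
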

\begin{proof} Recall that $M(\mu)$ is a finitely generated free Abelian group and 
$S(\lambda; \mu)$ is its subgroup. Hence we can choose a basis $a_1, a_2, \dots, a_t$
in $M(\mu)$ such that $m_1 a_1, m_2 a_2, \dots, m_k a_k$ is a basis of $S(\lambda; \mu)$
for some $m_i \in \mathbb N$. We claim that all $m_i=1$. First, we notice that
$a_1 \otimes 1, a_2 \otimes 1, \dots, a_t \otimes 1$ form a basis of 
$M^F(\mu)$ and $m_1 a_1 \otimes 1, m_2 a_2 \otimes 1, \dots, m_k a_k \otimes 1$
generate $S^F(\lambda; \mu)$ for any field $F$. Thus $\dim_F S^F(\lambda; \mu)
= k$ for $\ch F = 0$ and $\dim_F S^F(\lambda; \mu)
< k$ if $\ch F \mid m_i$ for at least one $m_i$. 
However, by~\cite[Theorem~17.13 (III)]{JamesSymm},  $\dim_F S^F(\lambda; \mu)$
does not depend on the field $F$. Therefore all $m_i=1$ and 
$M(\mu)/S(\lambda; \mu)$ is a free Abelian group.
\end{proof}
%
%

Let $c \geqslant 2$ be a natural number satisfying the following conditions:
$\mu_{c-1} = \lambda_{c-1}$ and $\mu_c > \lambda_c$.
Then we define the operators $A_c$ (``adding'') and $R_c$ (``raising'') in the following way:
\begin{enumerate}
\item if $\lambda_{c} =  \lambda_{c-1}$, then $A_c(\lambda; \mu)=(0;0)$ where $0 \vdash 0$ is a zero partition, otherwise $A_c(\lambda; \mu)=(\tilde\lambda; \mu)$
where $\tilde \lambda_i = \lambda_i$
for $i\ne c$ and $\tilde\lambda_{c} = \lambda_{c}+1$;
\item $R_c(\lambda; \mu)=(\tilde\lambda; \tilde\mu)$
where $\tilde \mu_i = \mu_i$
for $i\ne c-1,c$; $\tilde\mu_c = \lambda_c$, $\tilde\mu_{c-1} = \mu_{c-1}+(\mu_c-\lambda_c)$,
$\tilde\lambda_1 = \tilde\mu_1$ and
$\tilde \lambda_i = \lambda_i$
for $i > 1$.
\end{enumerate}

Fix $i \in\mathbb N$ and $0 \leqslant v \leqslant \mu_{i+1}$.
Let $\nu \vDash n$, $\nu_j = \mu_j$ for $j \ne i, i+1$, $\nu_i = \mu_i+\mu_{i+1}-v$,
$\nu_{i+1}=v$.
Then we define $\psi_{i,v} \in \Hom_{\mathbb ZS_n} (M(\mu),M(\nu))$
in the following way: $\psi_{i,v}[T_\mu] = \sum [T_\nu]$
where the summation runs over the set of all tabloids $[T_\nu]$ such that $[T_\nu]$
agrees with $[T_\mu]$ in all the rows except the $i$th and the $(i+1)$th,
and the $(i+1)$th is a subset of size $v$ of the $(i+1)$th row in $[T_\mu]$.
Analogously, we define $\psi^F_{i,v} \in \Hom_{FS_n} (M^F(\mu),M^F(\nu))$
for any field $F$.

\begin{lemma}\label{LemmaSlambdamuPsi}
\begin{enumerate}
\item $\psi_{c-1, \lambda_c} S(\lambda; \mu) =  S(R_c(\lambda; \mu))$;
\item $\ker \psi_{c-1, \lambda_c} \cap S(\lambda; \mu)=  S(A_c(\lambda; \mu))$.
\end{enumerate}
\end{lemma}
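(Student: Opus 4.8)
The plan is to prove the statement over $\mathbb Q$ first, using James's theory of Specht modules for skew shapes, and then descend to $\mathbb Z$ by the torsion-freeness established in Lemma~\ref{LemmaNoTorsion}. Recall that $S(\lambda;\mu)$ is spanned over $\mathbb Z$ by the "skew polytabloids" $e^{\lambda,\mu}_{T_\mu}=\sum_{\sigma\in C_{T_\lambda}}(\sign\sigma)\,\sigma[T_\mu]$, and that $\psi_{c-1,\lambda_c}$ is a $\mathbb ZS_n$-homomorphism from $M(\mu)$ to $M(\nu)$, where $\nu$ is obtained by moving $\mu_c-\lambda_c$ boxes from row $c$ up to row $c-1$. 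Notice that after this move the partition $\nu$ has $\nu_c=\lambda_c$, so $\lambda_i\leqslant\nu_i$ for all $i$ and the pair $R_c(\lambda;\mu)=(\tilde\lambda;\tilde\mu)$ (with $\tilde\mu=\nu$ up to the convention $\tilde\lambda_1=\tilde\mu_1$) is again an admissible pair; similarly $A_c(\lambda;\mu)$ is admissible (or the zero pair, when row $c$ of $D_\lambda$ is already full, in which case $\psi_{c-1,\lambda_c}$ is injective on $S(\lambda;\mu)$ and (2) is the trivial identity $0=0$, while (1) reads $\psi S(\lambda;\mu)=S(\lambda;\mu)$ — we must check the target pair is then $(\lambda;\mu)$ itself, which it is since no box actually moves when $\lambda_c=\lambda_{c-1}=\mu_{c-1}$ forces... wait, here $\mu_c>\lambda_c$, so a box does move; the point is rather that $A_c$ degenerates but $R_c$ does not).

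For part (1): I would compute $\psi_{c-1,\lambda_c}(e^{\lambda,\mu}_{T_\mu})$ directly. Applying $\psi_{c-1,\lambda_c}$ to $[T_\mu]$ produces the sum of all tabloids obtained by choosing a $\lambda_c$-subset of row $c+1$... no — of row $c$, call it the surviving part of row $c$, and dumping the complementary $(\mu_c-\lambda_c)$-set into row $c-1$. Since the column group $C_{T_\lambda}$ only permutes entries within the first $\lambda_c$ columns and fixes everything outside $D_\lambda$, in particular it fixes the part of row $c$ lying in columns $>\lambda_c$ — exactly the part that is free to move. A short bookkeeping argument should show $\psi_{c-1,\lambda_c}(e^{\lambda,\mu}_{T_\mu})$ equals $e^{\tilde\lambda,\tilde\mu}_{T_{\tilde\mu}}$ summed over the ways of arranging the moved boxes, and that these span $S(R_c(\lambda;\mu))$; conversely every generator of $S(R_c(\lambda;\mu))$ arises this way. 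This gives $\psi_{c-1,\lambda_c}S(\lambda;\mu)=S(R_c(\lambda;\mu))$ at least up to a positive integer multiple, and working over $\mathbb Q$ (where James's results~\cite[\S17]{JamesSymm} apply verbatim) gives equality of the $\mathbb Q$-spans; Lemma~\ref{LemmaNoTorsion} then forces equality over $\mathbb Z$.

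For part (2): the inclusion $S(A_c(\lambda;\mu))\subseteq\ker\psi_{c-1,\lambda_c}\cap S(\lambda;\mu)$ should be a direct check — a skew polytabloid for $\tilde\lambda=\lambda+\varepsilon_c$ is antisymmetrized over a column group that now includes column $\lambda_c+1$ in row $c$, so the terms in $\psi_{c-1,\lambda_c}$ cancel in pairs. For the reverse inclusion I would pass to $\mathbb Q$ and use a dimension count: by the Littlewood–Richardson / branching analysis in~\cite[\S17]{JamesSymm}, over a field one has a short exact sequence $0\to S^F(A_c(\lambda;\mu))\to S^F(\lambda;\mu)\xrightarrow{\psi} S^F(R_c(\lambda;\mu))\to 0$, so $\dim_F\ker(\psi|_{S^F(\lambda;\mu)})=\dim_F S^F(A_c(\lambda;\mu))$; combined with the established inclusion this gives equality over $\mathbb Q$, and then Lemma~\ref{LemmaNoTorsion} (applied to the admissible pair $A_c(\lambda;\mu)$, so that $S(A_c(\lambda;\mu))$ is a direct summand... more precisely, so that $M(\mu)/S(A_c(\lambda;\mu))$ is torsion-free, hence $S(A_c(\lambda;\mu))$ is "pure" in $M(\mu)$) lets me conclude: if $x\in\ker\psi\cap S(\lambda;\mu)$ then $x\in S(A_c(\lambda;\mu))\otimes\mathbb Q\cap M(\mu)=S(A_c(\lambda;\mu))$.

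The main obstacle I anticipate is the explicit identification in part (1) of $\psi_{c-1,\lambda_c}(e^{\lambda,\mu}_{T_\mu})$ with a spanning set of $S(R_c(\lambda;\mu))$ — matching up column groups $C_{T_\lambda}$ and $C_{T_{\tilde\lambda}}$ correctly when boxes migrate between rows $c-1$ and $c$, and tracking the exact integer coefficients so that the $\mathbb Z$-spans (not merely $\mathbb Q$-spans) coincide. This is essentially the content of the relevant pages of~\cite{JamesSymm} adapted to the present normalization $\lambda_1=\mu_1$; once the field case is in hand, the descent to $\mathbb Z$ is forced by Lemma~\ref{LemmaNoTorsion} and is routine.
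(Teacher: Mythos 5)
Your treatment of part (2) matches the paper's: establish $S(A_c(\lambda;\mu))\subseteq\ker\psi_{c-1,\lambda_c}\cap S(\lambda;\mu)$ by a direct cancellation, then pass to $\mathbb Q$ and invoke James's Theorem~17.13 to get the reverse containment up to an integer multiple, and finally descend to $\mathbb Z$ via Lemma~\ref{LemmaNoTorsion}. That is precisely what the paper does there.

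For part (1), however, your fallback argument has a genuine gap. You sketch the direct tabloid computation identifying $\psi_{c-1,\lambda_c}(e^{\lambda,\mu}_{T_\mu})$ with generators of $S(R_c(\lambda;\mu))$ and correctly flag this as the main obstacle, but then suggest that if the computation only yields equality ``up to a positive integer multiple,'' the $\mathbb Q$-result combined with Lemma~\ref{LemmaNoTorsion} will force equality over $\mathbb Z$. It will not. Equal $\mathbb Q$-spans together with torsion-freeness of $M(\nu)/S(R_c(\lambda;\mu))$ (so that $S(R_c(\lambda;\mu))$ is pure in $M(\nu)$) only yields the containment $\psi_{c-1,\lambda_c}S(\lambda;\mu)\subseteq S(R_c(\lambda;\mu))$; nothing in that argument excludes, say, $\psi_{c-1,\lambda_c}S(\lambda;\mu)=2\,S(R_c(\lambda;\mu))$, which has the same $\mathbb Q$-span and lies inside the pure subgroup. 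To rule that out you must actually verify over $\mathbb Z$ that the generators of $S(R_c(\lambda;\mu))$ are hit with coefficient $1$. The paper does exactly this by observing that James's computation in \cite[Lemma~17.12]{JamesSymm}, which both identifies the image of a skew polytabloid and proves $S(A_c(\lambda;\mu))\subseteq\ker\psi_{c-1,\lambda_c}$, is an identity among tabloids with integer coefficients and therefore carries over to $\mathbb Z$ unchanged. In short: Lemma~\ref{LemmaNoTorsion} is the right tool only for the hard containment $\ker\psi_{c-1,\lambda_c}\cap S(\lambda;\mu)\subseteq S(A_c(\lambda;\mu))$ in part (2), where you use it correctly; part (1) needs the over-$\mathbb Z$ computation outright, and there is no purely formal way around it.
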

\begin{proof}
The proof of the first part of the lemma and of the embedding 
$\ker \psi_{c-1, \lambda_c} \supseteq  S(A_c(\lambda; \mu))$
is completely analogous to~\cite[Lemma 17.12]{JamesSymm}.
Now we notice that there exists a natural embedding $M(\lambda)\otimes 1 \subset M^{\mathbb Q}(\lambda)$.
By~\cite[Theorem 17.13]{JamesSymm}, $\ker \psi^{\mathbb Q}_{c-1, \lambda_c}  \cap S^{\mathbb Q}(\lambda; \mu) =  S^{\mathbb Q}(A_c(\lambda; \mu))$.
Thus if $\psi_{c-1, \lambda_c} a = 0$ for some $a\in S(\lambda; \mu)$,
then $ma \in  S(A_c(\lambda; \mu))$ for some $m \in\mathbb N$ and
 $a \in  S(A_c(\lambda; \mu))$
 since $M(\mu)/S(A_c(\lambda; \mu))$ is torsion-free by 
 Lemma~\ref{LemmaNoTorsion}.
\end{proof}

\begin{lemma}\label{LemmaSlambdamuSpechtSeries}
Let $n\in\mathbb N$, $\lambda \vdash n'$, $\mu \vDash n$, $n'\leqslant n$, $\lambda_i \leqslant \mu_i$
for all $i\in \mathbb N$. Then $S(\lambda; \mu)$
has a chain of submodules
$$S(\lambda; \mu) = M_0 \supsetneqq M_1 \supsetneqq M_2 \supsetneqq 
\dots \supsetneqq M_t = 0$$
 with factors $M_i/M_{i+1}$ isomorphic to Specht modules.
 Moreover, $S(\lambda; \mu)/M_i$ is torsion-free for any $i$.
\end{lemma}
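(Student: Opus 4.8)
The plan is to mimic the classical proof that $S(\lambda;\mu)$ admits a Specht series (as in \cite[Section 17]{JamesSymm}) but carry it out over $\mathbb Z$, using Lemma~\ref{LemmaSlambdamuPsi} as the engine and Lemma~\ref{LemmaNoTorsion} together with Lemma~\ref{LemmaSlambdamuSpechtSeries}'s own torsion-freeness claim as the bookkeeping that keeps everything integral. The proof goes by induction on the quantity $d := \sum_i(\mu_i-\lambda_i) = n-n'$, the "defect" of the pair $(\lambda;\mu)$. In the base case $d=0$ we have $\mu=\lambda$ and $S(\lambda;\lambda)\cong S(\lambda)$ is itself a Specht module, so the chain $S(\lambda)\supsetneqq 0$ works, and $S(\lambda)/S(\lambda)=0$ is (vacuously) torsion-free while $S(\lambda)/0 = S(\lambda)$ — wait, one must check $S(\lambda;\mu)/M_i$ is torsion-free; for $i=t$ this is $S(\lambda;\mu)$ itself, and indeed $M(\mu)/S(\lambda;\mu)$ is torsion-free by Lemma~\ref{LemmaNoTorsion}, but that does not immediately give that $S(\lambda;\mu)$ has no torsion — however $S(\lambda;\mu)\subseteq M(\mu)$ which is free, so $S(\lambda;\mu)$ is automatically torsion-free, handling $i=t$. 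For general $i$ one argues that each successive factor arises inside a torsion-free ambient module.

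For the inductive step, assume $d\geqslant 1$. Then there is a smallest index where $\mu$ and $\lambda$ differ; let $c\geqslant 2$ be chosen with $\mu_{c-1}=\lambda_{c-1}$ and $\mu_c>\lambda_c$ (the convention $\lambda_1=\mu_1$ from the excerpt guarantees $c\geqslant 2$ exists once $d\geqslant 1$). Apply the homomorphism $\psi_{c-1,\lambda_c}$ of Lemma~\ref{LemmaSlambdamuPsi}. Part (1) gives $\psi_{c-1,\lambda_c}S(\lambda;\mu) = S(R_c(\lambda;\mu))$ and part (2) gives $\ker\psi_{c-1,\lambda_c}\cap S(\lambda;\mu) = S(A_c(\lambda;\mu))$, so there is a short exact sequence of $\mathbb ZS_n$-modules
\[
0 \longrightarrow S(A_c(\lambda;\mu)) \longrightarrow S(\lambda;\mu) \longrightarrow S(R_c(\lambda;\mu)) \longrightarrow 0.
\]
Now $R_c$ strictly decreases the defect: it moves $\mu_c-\lambda_c>0$ boxes from row $c$ up into row $c-1$ of the diagram, so $\sum_i(\tilde\mu_i-\tilde\lambda_i) < d$ (one checks $\tilde\mu_{c-1}-\tilde\lambda_{c-1} + \tilde\mu_c - \tilde\lambda_c = 0$ replaces the old contribution $\mu_c-\lambda_c$, while $\tilde\lambda_1=\tilde\mu_1$ keeps row~1 balanced; the other rows are unchanged). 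Similarly $A_c$ either produces the zero pair (whose $S$ is $0$, handled trivially) or increases $\lambda_c$ by one, again strictly decreasing the defect. So by the induction hypothesis both $S(R_c(\lambda;\mu))$ and $S(A_c(\lambda;\mu))$ have Specht series, and concatenating the series of the submodule $S(A_c(\lambda;\mu))$ with the preimage of the series of the quotient $S(R_c(\lambda;\mu))$ yields a Specht series for $S(\lambda;\mu)$: if $S(A_c)\supsetneqq N_1\supsetneqq\cdots\supsetneqq 0$ and $S(R_c)\supsetneqq Q_1\supsetneqq\cdots\supsetneqq 0$ with Specht factors, pull the $Q_j$ back to submodules $\tilde Q_j$ of $S(\lambda;\mu)$ containing $S(A_c)$, and splice to get $S(\lambda;\mu)=\tilde Q_0\supsetneqq\tilde Q_1\supsetneqq\cdots\supsetneqq S(A_c)=\tilde Q_r\supsetneqq N_1\supsetneqq\cdots\supsetneqq 0$, whose factors are $Q_{j-1}/Q_j$ and $N_{i}/N_{i+1}$, all Specht modules. (One must discard any repetitions to keep the chain strictly decreasing; a Specht module over $\mathbb Z$ is nonzero, so no factor collapses.)

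The remaining point — and the one requiring the most care — is the torsion-freeness of $S(\lambda;\mu)/M_i$ for every $i$ in the assembled chain, because this is needed not only as a conclusion but as the hypothesis that makes the induction (and the appeal to Lemma~\ref{LemmaNoTorsion} inside Lemma~\ref{LemmaSlambdamuPsi}) go through cleanly. For the top part of the chain, $S(\lambda;\mu)/\tilde Q_j$ maps isomorphically onto $S(R_c(\lambda;\mu))/Q_j$ via $\psi_{c-1,\lambda_c}$, which is torsion-free by the induction hypothesis applied to the pair $R_c(\lambda;\mu)$. For the bottom part, $\tilde Q_r = S(A_c(\lambda;\mu))$ and one needs $S(\lambda;\mu)/N_i$ torsion-free; here I would use that $S(\lambda;\mu)/S(A_c(\lambda;\mu))\cong S(R_c(\lambda;\mu))$ is torsion-free together with $S(A_c(\lambda;\mu))/N_i$ being torsion-free (induction hypothesis for $A_c(\lambda;\mu)$, or trivially $0$ if $A_c$ gave the zero pair) — an extension of a torsion-free group by a torsion-free group is torsion-free, so $S(\lambda;\mu)/N_i$ is torsion-free. \textbf{The hard part} is really just verifying that the splicing respects strict inclusions and that the defect genuinely drops under both $A_c$ and $R_c$ (a small combinatorial check on the definitions of $A_c, R_c$), after which the homological bookkeeping with torsion-freeness is routine; no new idea beyond Lemma~\ref{LemmaSlambdamuPsi} is needed.
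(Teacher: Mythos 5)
Your proof follows essentially the same route as the paper for the first part --- iterate Lemma~\ref{LemmaSlambdamuPsi} via the short exact sequence $0 \to S(A_c(\lambda;\mu)) \to S(\lambda;\mu) \to S(R_c(\lambda;\mu)) \to 0$ and splice --- but there is a genuine gap in the termination argument. The defect $d = n - n' = \sum_i(\mu_i-\lambda_i)$ does \emph{not} decrease under $R_c$ when $c > 2$. For $c > 2$ one has $\tilde\lambda_{c-1} = \lambda_{c-1} = \mu_{c-1}$ while $\tilde\mu_{c-1} = \mu_{c-1} + (\mu_c - \lambda_c)$, so $\tilde\mu_{c-1} - \tilde\lambda_{c-1} = \mu_c - \lambda_c \neq 0$; moreover $|\tilde\lambda| = |\lambda|$ because $\tilde\lambda_1 = \tilde\mu_1 = \mu_1 = \lambda_1$. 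The ``small combinatorial check'' you invoke is therefore simply wrong: the defect does not cancel in rows $c-1$ and $c$, it just migrates to row $c-1$, and the induction as stated would not terminate. (Your computation is correct only when $c = 2$, where $\tilde\lambda_1 = \tilde\mu_1$ actually absorbs the discrepancy.) Termination does hold, but you need a finer measure, e.g.\ the pair $(d, c)$ in lexicographic order: $A_c$ drops $d$; $R_c$ with $c = 2$ drops $d$; $R_c$ with $c > 2$ keeps $d$ fixed and strictly drops the index $c$ of the first discrepancy. The paper leaves this step informal but flags exactly the two facts underlying such a measure ($\tilde\lambda_c > \lambda_c$ under $A_c$, and $R_c$ moving boxes upward).

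For the torsion-freeness claim you take a genuinely different and heavier route: an inductive torsion-freeness hypothesis threaded through the short exact sequence, using that torsion-free abelian groups are closed under extensions. This is correct, but the paper has a shorter argument which requires no induction at all and decouples the second claim entirely from the inductive construction: given \emph{any} chain whose factors are Specht modules, if $a \in S(\lambda;\mu)$ satisfies $ma \in M_i$ but $a \notin M_i$, take the maximal $k < i$ with $a \in M_k$; then the image of $a$ in the factor $M_k/M_{k+1}$ is nonzero torsion, contradicting that Specht modules are subgroups of the free abelian groups $M(\nu)$.
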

\begin{proof}
If $\mu = \lambda$, then $S(\lambda; \mu) = S(\lambda)$ and there is nothing
to prove.  If $\mu \ne \lambda$, then we find $c\in\mathbb N$ such
that $\lambda_i = \mu_i$ for all $1 \leqslant i \leqslant c-1$
and $\lambda_c < \mu_c$. Since we always assume $\lambda_1 = \mu_1$,
we have $c \geqslant 2$. Now we apply Lemma~\ref{LemmaSlambdamuPsi}. Note that 
$\tilde\lambda_c > \lambda_c$ where $A_c(\lambda; \mu)=(\tilde\lambda; \tilde \mu)$ 
and $R_c$ moves the boxes of $D_\mu$ upper. 
Applying Lemma~\ref{LemmaSlambdamuPsi} many times, we get the first part of
Lemma~\ref{LemmaSlambdamuSpechtSeries} by induction.

Suppose $S(\lambda; \mu)/M_i$ is not torsion-free and $ma\in M_i$
for some $a \in S(\lambda; \mu)$, $a \notin M_i$, and $m\in\mathbb N$.
Then we can find an index $0 \leqslant k < i$ such that $a \in M_k$, $a \notin M_{k+1}$. However $ma\in M_i \subseteq M_{k+1}$.
i.e., the Specht module $M_k/M_{k+1}$ is not torsion-free either.
We get a contradiction since all Specht modules are subgroups in finitely generated free Abelian groups.
\end{proof} 

Now we can prove the $\mathbb Z$-analog of the particular case of the Littlewood~--- Richardson rule that sometimes is referred to as Young's rule~\cite[Theorem 2.3.3]{ZaiGia}, \cite[Theorem 12.5.2]{DrenKurs} and sometimes as Pieri's formula~\cite[(A.7)]{FultonHarris}. 

\begin{theorem}\label{TheoremYoungsRule}
Let $t, n\in\mathbb N$, $m\in\mathbb Z_+$, $t < n$, and $\lambda \vdash t$ and let $\mathbb Z$ be the trivial $\mathbb ZS_{n-t}$-module. Then $$\bigl(S(\lambda)/mS(\lambda)\bigr) \uparrow S_n := \mathbb ZS_n
\otimes_{\mathbb Z(S_{t} \times S_{n-t})} (\bigl(S(\lambda)/mS(\lambda)\bigr) \otimes_{\mathbb Z} \mathbb Z)$$ has a series of submodules with factors $S(\nu)/mS(\nu)$
where $\nu$ runs over the set of all partitions $\nu \vdash n$ such that 
$$\lambda_n \leqslant \nu_n \leqslant \lambda_{n-1} \leqslant \nu_{n-1}
\leqslant \dots \leqslant \lambda_2 \leqslant \nu_2 \leqslant \lambda_1 \leqslant \nu_1.$$
(Each factor occurs exactly once.)
\end{theorem}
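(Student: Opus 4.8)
The plan is to reduce Theorem~\ref{TheoremYoungsRule} to the integral Littlewood--Richardson machinery assembled in the previous section, exactly as one reduces the characteristic-zero Young's rule to the modules $S(\lambda;\mu)$. First I would observe that the induced module $\bigl(S(\lambda)/mS(\lambda)\bigr)\uparrow S_n$ is the reduction mod $m$ of $S(\lambda)\uparrow S_n = \mathbb ZS_n\otimes_{\mathbb Z(S_t\times S_{n-t})}(S(\lambda)\otimes_{\mathbb Z}\mathbb Z)$, since tensoring up is right exact and commutes with the base change $-\otimes_{\mathbb Z}\mathbb Z_m$; so it suffices to produce the desired filtration of $S(\lambda)\uparrow S_n$ with \emph{torsion-free} successive quotients isomorphic to Specht modules $S(\nu)$, and then reduce the whole filtration mod $m$. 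Torsion-freeness of the quotients is what guarantees that reducing mod $m$ keeps the factors of the shape $S(\nu)/mS(\nu)$ and does not introduce spurious extra terms.

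Next I would identify $S(\lambda)\uparrow S_n$ with a module of the form $S(\lambda';\mu)$ for a suitable pair. Concretely, set $\mu = (n-t,\ 0,\dots)$ enlarged appropriately --- more precisely, following James~\cite[Chapter 17]{JamesSymm}, the permutation module $M(\mu)$ with $\mu=(\lambda_1+(n-t),\lambda_2,\dots,\lambda_s)$ (so that $D_\lambda\subseteq D_\mu$ with $\lambda_1=\mu_1$ forced by appending the $n-t$ extra boxes to the first row) carries the submodule $S(\lambda;\mu)$, and the standard isomorphism $S(\lambda;\mu)\cong S(\lambda)\uparrow S_{t}\uparrow S_n$ holds over $\mathbb Z$ by the same computation with column stabilizers as over a field (the relevant isomorphism $S(\lambda;\lambda)\cong S(\lambda)$ was already noted in the text, and the induction statement is the integral analog). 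Then Lemma~\ref{LemmaSlambdamuSpechtSeries} applied to this pair $(\lambda;\mu)$ gives a chain of submodules of $S(\lambda;\mu)$ with Specht-module factors and with all quotients $S(\lambda;\mu)/M_i$ torsion-free. The combinatorics of which Specht modules appear: each application of $R_c$ in Lemma~\ref{LemmaSlambdamuPsi} redistributes the extra boxes from lower rows upward, and tracking the possible end shapes $\tilde\lambda=\tilde\mu$ one gets exactly the partitions $\nu\vdash n$ satisfying the interlacing condition $\lambda_n\leqslant\nu_n\leqslant\lambda_{n-1}\leqslant\cdots\leqslant\nu_1$, each exactly once --- this is the same bookkeeping as in the field case (cf.~\cite[Theorem 17.14]{JamesSymm}, \cite[Theorem 2.3.3]{ZaiGia}).

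Finally I would reduce the filtration mod $m$. Since each $M_i/M_{i+1}\cong S(\nu^{(i)})$ is torsion-free and, more importantly, each $S(\lambda;\mu)/M_i$ is torsion-free by Lemma~\ref{LemmaSlambdamuSpechtSeries}, the short exact sequences $0\to M_{i+1}\to M_i\to S(\nu^{(i)})\to 0$ stay exact after applying $-\otimes_{\mathbb Z}\mathbb Z_m$: the connecting $\Tor_1^{\mathbb Z}(S(\nu^{(i)}),\mathbb Z_m)$ vanishes because $S(\nu^{(i)})$ is free as an abelian group, so $M_i/mM_i$ maps \emph{onto} $M_{i-1}/mM_{i-1}$ restricted, and an easy diagram chase (using that all the ambient groups are free) shows $(M_i/mM_i)$ form a genuine filtration of $S(\lambda;\mu)/mS(\lambda;\mu)$ with factors $(M_i/M_{i+1})/m(M_i/M_{i+1})\cong S(\nu^{(i)})/mS(\nu^{(i)})$. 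Transporting this along the identification $S(\lambda;\mu)/mS(\lambda;\mu)\cong\bigl(S(\lambda)/mS(\lambda)\bigr)\uparrow S_n$ finishes the proof.

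\medskip

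The main obstacle I expect is not the reduction mod $m$ (which is formal once torsion-freeness is in hand) but establishing cleanly the integral isomorphism $S(\lambda;\mu)\cong S(\lambda)\uparrow S_n$ for the right pair $(\lambda;\mu)$, i.e., making precise the claim — asserted in the text only for $\mu=\lambda$ — that adjoining $n-t$ boxes to the first row of $D_\lambda$ realizes the induction functor on the submodule level over $\mathbb Z$. Over a field this is~\cite[Example after Definition 17.4]{JamesSymm}; over $\mathbb Z$ one has to check that no denominators intervene, which follows because the defining generators $e^{\lambda,\mu}_{T_\mu}$ of $S(\lambda;\mu)$ already live in $M(\mu)$ and the natural map $\mathbb ZS_n\otimes_{\mathbb Z(S_t\times S_{n-t})}(S(\lambda)\otimes\mathbb Z)\to S(\lambda;\mu)$ sending $\sigma\otimes f\mapsto \sigma\cdot(\text{tabloid image of }f)$ is surjective by construction and injective by a rank count against the field case combined with Lemma~\ref{LemmaNoTorsion}.
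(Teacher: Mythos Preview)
Your approach is essentially the paper's: identify $S(\lambda)\uparrow S_n$ with a module $S(\lambda;\mu)$, invoke Lemma~\ref{LemmaSlambdamuSpechtSeries} for the Specht filtration with torsion-free quotients, then reduce mod $m$. The paper carries out the last step by an explicit quotient computation rather than a $\Tor$ argument, but the content is the same.

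One concrete correction: your choice $\mu=(\lambda_1+(n-t),\lambda_2,\dots,\lambda_s)$ is wrong and in fact contradicts your own parenthetical ``$\lambda_1=\mu_1$ forced''. With boxes appended to the first row one has $\mu_1=\lambda_1+(n-t)\ne\lambda_1$, so the standing hypothesis $\lambda_1=\mu_1$ fails and Lemmas~\ref{LemmaSlambdamuPsi}--\ref{LemmaSlambdamuSpechtSeries} do not apply as stated; moreover the resulting $S(\lambda;\mu)$ is not the induced module (the extra boxes merge into the first row of the tabloid, so the row stabiliser is too large). The paper instead takes $\mu=(\lambda_1,\dots,\lambda_s,n-t)$, i.e.\ adds a new \emph{row} of length $n-t$ (an unordered partition, which is allowed since only $\mu\vDash n$ is required). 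With this $\mu$ the identification $S(\lambda)\uparrow S_n\cong S(\lambda;\mu)$ is immediate and the rest of your outline goes through.
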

\begin{proof} Suppose $\lambda=(\lambda_1, \dots, \lambda_s)$, $\lambda_s > 0$.
Then $S(\lambda) \uparrow S_n \cong S(\lambda; \mu)$
where $\mu=(\lambda_1, \dots, \lambda_s, n-t)$.
Now Lemma~\ref{LemmaSlambdamuSpechtSeries} implies the theorem for $m=0$.

Suppose $m > 0$. Then $\bigl(S(\lambda)/mS(\lambda)\bigr) \uparrow S_n \cong
\bigl(S(\lambda) \uparrow S_n\bigr) / \bigl(m(S(\lambda) \uparrow S_n)\bigr)$.
Let $$S(\lambda) \uparrow S_n = M_0 \supsetneqq M_1 \supsetneqq M_2 \supsetneqq 
\dots \supsetneqq M_t = 0$$
where $M_{i-1}/M_i \cong S(\lambda^{(i)})$, $\lambda^{(i)} \vdash n$, $1\leqslant i
\leqslant t$.

Hence $$\bigl(S(\lambda) \uparrow S_n\bigr) / \bigl(m(S(\lambda) \uparrow S_n)\bigr) = \overline{M_0} \supsetneqq \overline{M_1} \supsetneqq \overline {M_2} \supsetneqq 
\dots \supsetneqq \overline{M_t} = 0$$
where $\overline{M_i} \cong (M_i + m(S(\lambda) \uparrow S_n))/m(S(\lambda) \uparrow S_n)$
and \begin{equation*}\begin{split}
\overline{M_{i-1}} / \overline{M_i} \cong (M_{i-1} + m(S(\lambda) \uparrow S_n))/(M_i + m(S(\lambda) \uparrow S_n))  \cong \\   M_{i-1}/M_{i-1}\cap(M_i + m(S(\lambda) \uparrow S_n)) 
 = \\ M_{i-1}/(M_i + M_{i-1}\cap m(S(\lambda) \uparrow S_n))
 \cong (M_{i-1}/M_i)/((M_i + M_{i-1}\cap m(S(\lambda) \uparrow S_n))/M_i).
\end{split}\end{equation*}
 By Lemma~\ref{LemmaSlambdamuSpechtSeries},
 $(S(\lambda) \uparrow S_n) / M_{i-1}$ is torsion-free.
 Hence
$M_{i-1}\cap m(S(\lambda) \uparrow S_n) =  mM_{i-1}$ and \begin{equation*}\begin{split}\overline{M_{i-1}} / \overline{M_i} \cong 
(M_{i-1}/M_i)/((M_i + mM_{i-1})/M_i) =\\ (M_{i-1}/M_i)/(m(M_{i-1}/M_i))\cong S(\lambda^{(i)})/mS(\lambda^{(i)}).\end{split}\end{equation*}
The description of $\lambda^{(i)}$ is obtained from the proof of Lemma~\ref{LemmaSlambdamuSpechtSeries}.
\end{proof}

\section{Algebras of upper triangular matrices}\label{SectionUT2R}

\subsection{Codimensions and multilinear identities}\label{SubsectionCodimUT2R}

Let $M$ be an $(R_1, R_2)$-bimodule for commutative rings $R_1$, $R_2$ with $1$
and let $R = \left(\begin{array}{rr} R_1 & M \\ 0 & R_2\end{array}\right)$.

In this section, we calculate $c_n(R, q)$ for all $q = p^k$ and $q=0$,
describe the structure of the $\mathbb Z S_n$-module
$\frac{P_n(\mathbb Z)}{P_n(\mathbb Z) \cap \Id(R, \mathbb Z)}$
and find such multilinear polynomials that elements of $\Id(R, \mathbb Z) \cap P_n(\mathbb Z)$
are consequences of them.
\begin{remark}
If $F$ is a field of characteristic $0$ and $A=\UT_2(F):=\left(\begin{array}{rr} F & F \\ 0 & F\end{array}\right)$,
then $c_n(A, F)$ and generators of $\Id(A, F)$ as a $T$-ideal can be found, e.g., in~\cite[Theorem 4.1.5]{ZaiGia}. The structure of the $F S_n$-module
$\frac{P_n(F)}{P_n(F) \cap \Id(A, F)}$ can be determined using proper cocharacters~\cite[Theorem 12.5.4]{DrenKurs}.
\end{remark}

\begin{theorem}\label{TheoremCodimUT2R}
All polynomials from $P_n(\mathbb Z) \cap \Id(R, \mathbb Z)$, $n\in\mathbb N$,
are consequences of the left hand sides of the following polynomial
identities in $R$:
\begin{equation}\label{EqId1}[x,y][z,t]\equiv 0,
\end{equation} \begin{equation}\label{EqId2}\ell x\equiv 0,\end{equation}
 \begin{equation}\label{EqId3}m [x,y]=0
 \end{equation}
where $[x,y]:= xy-yx$, $$\ell := \min\left\lbrace n \in\mathbb N \mid na = 0 \text{ for all } a\in R_1\cup R_2 \right\rbrace,$$ $$m := \min\left\lbrace n \in\mathbb N \mid na = 0 \text{ for all } a\in M \right\rbrace.$$ (If one of the corresponding sets is empty, we define $\ell=0$ or $m=0$, respectively. Note that $m \mid \ell$.)

 Moreover, $\frac{P_n(\mathbb Z)}{P_n(\mathbb Z) \cap \Id(R, \mathbb Z)} \cong \mathbb Z_\ell \oplus \mathbb (\mathbb Z_m)^{(n-2)2^{n-1}+1}$ where $\mathbb Z_0 := \mathbb Z$.
\end{theorem}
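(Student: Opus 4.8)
The plan is to prove Theorem~\ref{TheoremCodimUT2R} in two stages: first identify the $T$-ideal generated by the left hand sides of \eqref{EqId1}--\eqref{EqId3}, then compute the quotient of $P_n(\mathbb Z)$ by its intersection with that $T$-ideal and verify the quotient coincides with $\frac{P_n(\mathbb Z)}{P_n(\mathbb Z)\cap\Id(R,\mathbb Z)}$. For the first inclusion (that \eqref{EqId1}--\eqref{EqId3} really are identities of $R$) one just substitutes matrices: any commutator $[a,b]$ with $a,b\in R$ lands in the ``strictly upper'' part $\left(\begin{smallmatrix}0 & M\\ 0 & 0\end{smallmatrix}\right)$, so a product of two commutators is zero, giving \eqref{EqId1}; the scalar identities \eqref{EqId2}, \eqref{EqId3} hold by the definition of $\ell$ and $m$ together with the fact that $R_1\oplus R_2$ and $M$ generate $R$ additively, and $m\mid\ell$ because $M$ is an $(R_1,R_2)$-bimodule so $\ell M=0$.

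Next I would determine a spanning set of $\frac{P_n(\mathbb Z)}{P_n(\mathbb Z)\cap J}$ where $J$ is the $T$-ideal generated by the three polynomials. Using the rewriting $yx=[y,x]+xy$ and $[\dots]x=x[\dots]+[[\dots],x]$ as in the proof of Theorem~\ref{TheoremCodimProperAndOrdinary}, every multilinear monomial is congruent modulo $J$ to a linear combination of $x_{i_1}\cdots x_{i_k}\,f$ with $i_1<\dots<i_k$ and $f$ proper in the remaining variables; but \eqref{EqId1} forces every proper factor to be a single long commutator, and the anticommutativity/Jacobi relations collapse a long commutator $[x_{j_1},x_{j_2},\dots,x_{j_r}]$ modulo $J$ to $\pm[x_{\min},x_{j_2},\dots]$ with the remaining indices in increasing order — so modulo $J$ the space $\Gamma_t(\mathbb Z)$ contributes, for each choice of which $t$ variables are involved, exactly one basis commutator (for $t\geqslant 2$), annihilated by $m$. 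Counting: the identity part $x_1\cdots x_n$ contributes one copy of $\mathbb Z_\ell$; and for each $k$ with $0\leqslant k\leqslant n-2$ and each $k$-subset $\{i_1<\dots<i_k\}$, there is one generator of order $m$, giving $\sum_{k=0}^{n-2}\binom{n}{k}=2^n-n-1$ generators, except that when $k\leqslant n-3$ the ``proper part'' on $n-k\geqslant 3$ variables already is a single commutator on those variables and contributes $1$, while... here I must be careful and recount so that the total number of $\mathbb Z_m$-summands comes out to $(n-2)2^{n-1}+1$. The cleanest bookkeeping is via Theorem~\ref{TheoremCodimProperAndOrdinary}: compute $\gamma_t(R,q)$ first (the proper codimensions), namely $\gamma_0=\ell$-part, $\gamma_1=0$, and for $t\geqslant 2$ the number of independent multilinear proper functions modulo \eqref{EqId1}, which one shows equals $t-1$ copies of $\mathbb Z_m$ (these are $[x_{\sigma(1)},\dots,x_{\sigma(t)}]$ reduced, and a dimension-over-a-field count for $\UT_2$ gives proper codimension $t-1$); then $c_n(R,m)=\sum_{t=2}^n\binom{n}{t}(t-1)=(n-2)2^{n-1}+1$ by the standard binomial identity $\sum_t\binom{n}{t}(t-1)=\sum_t t\binom{n}{t}-\sum_t\binom{n}{t}$ restricted to $t\geqslant 2$, which indeed yields $(n-2)2^{n-1}+1$.

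For the reverse inclusion — that $P_n(\mathbb Z)\cap\Id(R,\mathbb Z)\subseteq J$, equivalently that the spanning set above is $\mathbb Z$-independent modulo $\Id(R,\mathbb Z)$ with the stated orders — I would exhibit explicit substitutions in $R$ detecting each generator. To see the generator $x_1\cdots x_n$ has order exactly $\ell$ in the quotient, substitute $1_R$ for all variables. To detect a reduced commutator generator attached to a subset $\{i_1<\dots<i_k\}$ (with the other $t=n-k\geqslant 2$ variables forming the commutator), substitute $1_R$ for $x_{i_1},\dots,x_{i_k}$ and, for the commutator variables, choose elements of the form $\left(\begin{smallmatrix}r&0\\0&s\end{smallmatrix}\right)$ and $\left(\begin{smallmatrix}0&w\\0&0\end{smallmatrix}\right)$ so that the nested commutator evaluates to $\pm\left(\begin{smallmatrix}0&w'\\0&0\end{smallmatrix}\right)$ with $w'\ne 0$ of additive order $m$ in $M$; then multiplying by the $1_R$'s on the outside leaves this nonzero, and $m$ times it is zero. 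A Vandermonde-type argument over the commuting ring $R_1$ (resp. $R_2$) separates the $t-1$ different reduced commutators on a fixed variable set, exactly as in the field case for $\UT_2$.

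The main obstacle I anticipate is the combinatorial bookkeeping of the reduced proper multilinear functions: showing that modulo \eqref{EqId1} the group $\frac{\Gamma_t(\mathbb Z)}{\Gamma_t(\mathbb Z)\cap\Id(R,\mathbb Z)}$ is precisely $(\mathbb Z_m)^{t-1}$ — neither larger (one needs the Jacobi identity together with $[x,y][z,t]\equiv 0$ to kill all products of commutators and to reduce long commutators to the $t-1$ canonical ones $[x_a,x_1,x_2,\dots]$-type forms) nor smaller (one needs the explicit $\UT_2$ substitutions above, and the observation that $\Id(\UT_2(\mathbb Z),\mathbb Z)$ among proper polynomials is generated by \eqref{EqId1}, which can be imported from the field case via Theorem~\ref{TheoremFCodimofaRing} applied with $R_1=R_2=M=\mathbb Z$). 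Once $\gamma_t(R,q)$ is pinned down, Theorems~\ref{TheoremCodimProperAndOrdinary} and~\ref{TheoremZSnOrdinaryProper} deliver both the codimension formula and the $\mathbb ZS_n$-module structure, and the statement about $\Id(R,\mathbb Z)\cap P_n(\mathbb Z)$ being a consequence of \eqref{EqId1}--\eqref{EqId3} follows because the spanning count modulo $J$ matches the codimension modulo $\Id(R,\mathbb Z)$ exactly, forcing $J\cap P_n(\mathbb Z)=\Id(R,\mathbb Z)\cap P_n(\mathbb Z)$.
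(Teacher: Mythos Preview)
Your overall strategy is sound but differs from the paper's direct argument. The paper does \emph{not} route through the proper decomposition of Theorem~\ref{TheoremCodimProperAndOrdinary}; instead it produces a single explicit spanning set $\Xi$ consisting of $f_0=x_1x_2\cdots x_n$ together with all
\[
x_{i_1}\cdots x_{i_k}[x_s,x_r]x_{j_1}\cdots x_{j_{n-k-2}},\qquad i_1<\cdots<i_k<s,\ r<s,\ j_1<\cdots<j_{n-k-2}
\]
(a \emph{short} commutator with ordinary variables on both sides), and then isolates each element by a greedy substitution: pick the term with largest $k$ whose coefficient is not divisible by $m$, set $x_{i_1}=\cdots=x_{i_k}=x_s=1_{R_1}e_{11}$, $x_r=ae_{12}$, and the remaining variables to $1_R$; every other element of $\Xi$ vanishes. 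The count $|\Xi|=\sum_{k=2}^n(k-1)\binom{n}{k}=(n-2)2^{n-1}+1$ is the same sum you obtain, just indexed differently. Your route via $\gamma_t(R,\cdot)=t-1$ copies of $\mathbb Z_m$ and Theorem~\ref{TheoremCodimProperAndOrdinary} is legitimate and is essentially what the paper does \emph{afterwards}, in Theorem~\ref{TheoremProperZSnUT2}, once Theorem~\ref{TheoremCodimUT2R} has already been proved directly.

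Two concrete issues. First, your sentence claiming a long commutator collapses to ``$\pm[x_{\min},x_{j_2},\dots]$ with the remaining indices in increasing order'' and hence contributes ``exactly one basis commutator'' is wrong as stated, and contradicts your own later count of $t-1$: modulo~\eqref{EqId1} one has the metabelian Lie relation $[[x,y],[z,t]]\equiv 0$, so the last $t-2$ entries of a long commutator commute, and a basis for $\Gamma_t$ modulo~\eqref{EqId1} is $[x_i,x_t,x_1,\dots,\hat x_i,\dots,x_{t-1}]$ for $1\leqslant i\leqslant t-1$ --- that is $t-1$ elements, not one.

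Second, and more seriously, the ``Vandermonde-type argument over the commuting ring $R_1$'' you invoke to separate these $t-1$ commutators will not work: $R_1$ and $R_2$ are arbitrary commutative unital rings and need not contain enough elements, nor invertible differences, for a Vandermonde determinant to be a non-zerodivisor (consider $R_1=\mathbb Z/2\mathbb Z$). What does work, and what the paper uses in the proof of Theorem~\ref{TheoremProperZSnUT2}, is far simpler: for fixed $i$ put $x_i=ae_{12}$ and $x_j=1_{R_1}e_{11}$ for all $j\ne i$; then $[x_i,x_t,\dots]$ evaluates to $\pm ae_{12}$ while every other basis commutator starts with $[1_{R_1}e_{11},1_{R_1}e_{11}]=0$. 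With this substitution in place of the Vandermonde idea your argument goes through; drop the Vandermonde language entirely.
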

\begin{remark}
Now $c_n(R, q)$ can be easily computed.
If $R_1=R_2=M$ and $R_1 = R_2$ is a field, we obtain the same numbers as in~\cite[Theorem 4.1.5]{ZaiGia}.
\end{remark}
\begin{proof}[Proof of Theorem~\ref{TheoremCodimUT2R}.]
Denote by $e_{ij}$ the matrix units. Then $R= R_1 e_{11} \oplus R_2 e_{22}\oplus M e_{12}$
(direct sum of subspaces), $[R,R]\subseteq M e_{12}$, and (\ref{EqId1})--(\ref{EqId3}) are indeed
polynomial identities of $R$.

Now we consider an arbitrary mononomial from $P_n(\mathbb Z)$ and find the first inversion among the indexes of its variables.
We replace the corresponding pair of variables with the sum of their commutator and their product in the right order.
Note that $[x,y]u[z,t]=[x,y][z,t]u+[x,y][u,[z,t]] \equiv 0$ is a consequence of~(\ref{EqId1}).
Therefore, we may assume that all the variables to the right of the commutator have increasing indexes. For example: $$ \begin{array}{lcl} 
x_{3}x_{1}x_{4} x_{2} & = & x_{1} x_{3} x_{4} x_{2} + [x_{3}, x_{1}] x_{4} x_{2} \\
& \stackrel{(\ref{EqId1})}{\equiv} & x_{1} x_{3} x_{2} x_{4} +  x_{1} x_{3}[x_{4}, x_{2}] +  [x_{3}, x_{1}] x_{2} x_{4}  \\
& = & x_{1} x_{2} x_{3} x_{4} + x_{1} [x_{3}, x_{2}] x_{4} + x_{1} x_{3}[x_{4}, x_{2}] +  [x_{3}, x_{1}] x_{2} x_{4}.  \\
\end{array}$$
Continuing this procedure,
we present any element of $P_n(\mathbb Z)$
modulo the consequences of~(\ref{EqId1}) as a linear combination
of polynomials $f_0:=x_1 x_2 \dots x_n$ and  \begin{equation}\label{EqPolynomial}
x_{i_{1}}\dots x_{i_{k}}[x_{s},x_{r}]x_{j_{1}}\dots x_{j_{n-k-2}} \text{ for } i_{1} < \dots < i_{k}<s,\ r < s,\ j_{1} < \dots < j_{n-k-2}.\end{equation}
Denote the set of polynomials~(\ref{EqPolynomial}) by $\Xi$.

Consider the free Abelian group $\mathbb Z(\Xi \cup \lbrace f_0\rbrace)$
with the basis $\Xi \cup \lbrace f_0\rbrace$.
Now we have the surjective homomorphism $\varphi \colon \mathbb Z(\Xi \cup \lbrace f_0\rbrace)
\to \frac{P_n(\mathbb Z)}{P_n(\mathbb Z) \cap \Id(R, \mathbb Z)}$
where $\varphi(f)$ is the image of $f \in \Xi \cup \lbrace f_0\rbrace$ in 
$\frac{P_n(\mathbb Z)}{P_n(\mathbb Z) \cap \Id(R, \mathbb Z)}$.
We claim that $\ker \varphi$ is generated by $\ell f_0$ and all $m f$ where $f\in \Xi$.

Suppose that a linear combination $f_1$ of $f_0$ and elements from $\Xi$ is a polynomial identity,
however $f_1$ is not a linear combination of $\ell f_0$ and $m f$, $f\in \Xi$.
If we substitute $$x_1 =x_2 = \dots = x_n = 1_{R_i}e_{ii} \text{ where } i \in \lbrace 1,2 \rbrace,$$
all $f \in \Xi$ vanish. Therefore, the coefficient of $f_0$ is a multiple of $\ell$.
Now we find $f_2 := x_{i_{1}}\dots x_{i_{k}}[x_{s},x_{r}]x_{j_{1}}\dots x_{j_{n-k-2}} \in \Xi$
with the greatest $k$ such that the coefficient $\beta$ of $f_2$ in $f_1$ is not a multiple of $m$.
Then we substitute $x_{i_{1}}=\dots =x_{i_{k}}=x_s = 1_{R_1}e_{11}$, $x_r = a e_{12}$, $x_{j_{1}}=\dots = x_{j_{n-k-2}}=1_{R_1}e_{11}+1_{R_2}e_{22}=1_R$ where $a\in M$ and $\beta a \ne 0$.
Our choice of $f_2$ implies that $f_2$ is the only summand in $f_1$ that could be nonzero under this substitution. Hence $f_1$ does not vanish and we get a contradiction.
Therefore, $\ker \varphi$ is generated by $\ell f_0$ and $m f$, $f\in \Xi$.
In particular,
$\frac{P_n(\mathbb Z)}{P_n(\mathbb Z) \cap \Id(R, \mathbb Z)} \cong \mathbb Z_\ell \oplus  (\mathbb Z_m)^{|\Xi|}$ and every multilinear polynomial identity of $R$ is a consequence
of (\ref{EqId1})--(\ref{EqId3}).

Note that \begin{equation*}\begin{split}|\Xi|=\sum_{k=2}^n (k-1)\binom{n}{k} = \sum_{k=2}^n \frac{n!}{(k-1)!(n-k)!} -
\sum_{k=2}^n \binom{n}{k}=\\ n\sum_{k=1}^{n-1} \frac{(n-1)!}{k!(n-k-1)!} - (2^n-n-1)=n(2^{n-1}-1)- (2^n-n-1)
= (n-2)2^{n-1}+1\end{split}\end{equation*}
and the theorem follows.
\end{proof}


\begin{corollary}
Multilinear polynomial identities of $\UT_2(\mathbb Q)$ as a ring are generated by~(\ref{EqId1}).
\end{corollary}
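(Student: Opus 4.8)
The plan is to read the corollary off Theorem~\ref{TheoremCodimUT2R} by specializing to the bimodule datum $R_1 = R_2 = M = \mathbb Q$, where $M = \mathbb Q$ carries its natural $(\mathbb Q, \mathbb Q)$-bimodule structure, so that $R = \left(\begin{array}{rr} \mathbb Q & \mathbb Q \\ 0 & \mathbb Q\end{array}\right) = \UT_2(\mathbb Q)$. The first step is to compute the two parameters $\ell$ and $m$ appearing in Theorem~\ref{TheoremCodimUT2R}: since $\mathbb Q$ is torsion-free, there is no $n \in \mathbb N$ with $na = 0$ for all $a \in \mathbb Q$, so both defining sets $\{n\in\mathbb N \mid na=0 \text{ for all } a\in R_1\cup R_2\}$ and $\{n\in\mathbb N \mid na=0 \text{ for all } a\in M\}$ are empty, and hence $\ell = 0$ and $m = 0$ by the convention adopted in the statement of Theorem~\ref{TheoremCodimUT2R}.

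With $\ell = m = 0$, the left-hand sides of identities (\ref{EqId2}) and (\ref{EqId3}) become the zero polynomials $0\cdot x$ and $0\cdot [x,y]$; these impose no condition and have no non-trivial consequences. Therefore the conclusion of Theorem~\ref{TheoremCodimUT2R} — that every polynomial in $P_n(\mathbb Z) \cap \Id(R, \mathbb Z)$ is a consequence of the left-hand sides of (\ref{EqId1})--(\ref{EqId3}) — collapses to: every multilinear polynomial identity of $\UT_2(\mathbb Q)$, regarded with integer coefficients, is a consequence of (\ref{EqId1}). Conversely, $[x,y][z,t] \equiv 0$ is genuinely an identity of $\UT_2(\mathbb Q)$, since $[R,R] \subseteq \mathbb Q e_{12}$ and $(\mathbb Q e_{12})(\mathbb Q e_{12}) = 0$ (as already observed in the proof of Theorem~\ref{TheoremCodimUT2R}), so all of its consequences lie in $\Id(\UT_2(\mathbb Q), \mathbb Z)$. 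Combining the two inclusions, over all $n\in\mathbb N$, gives the corollary.

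There is no real obstacle here: the corollary is an immediate specialization of Theorem~\ref{TheoremCodimUT2R}, and the only point worth spelling out is that the phrase ``as a ring'' signals that we work with $\Id(\UT_2(\mathbb Q), \mathbb Z) \subseteq \mathbb Z\langle X\rangle$ — which is precisely the framework of Theorem~\ref{TheoremCodimUT2R} — and that the torsion-freeness of the entries is what makes the two characteristic-type relations (\ref{EqId2}) and (\ref{EqId3}) vacuous, leaving (\ref{EqId1}) as the sole generator.
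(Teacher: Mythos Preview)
Your proof is correct and is exactly the intended argument: the paper states the corollary immediately after Theorem~\ref{TheoremCodimUT2R} without a separate proof, and your specialization $R_1=R_2=M=\mathbb Q$ with $\ell=m=0$ (making (\ref{EqId2}) and (\ref{EqId3}) vacuous) is precisely how it follows.
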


\subsection{$\mathbb ZS_n$-modules}

Note that the Jacobi identity and~(\ref{EqId1}) imply that $\frac{\Gamma_n(\mathbb Z)}{\Gamma_n(\mathbb Z)\cap\Id(R,\mathbb Z)}$ is generated as a $\mathbb Z$-module by $[x_i, x_n, x_1, x_2, \dots, \hat x_i, \dots, x_{n-1}]$ where $1 \leqslant i \leqslant n-1$.

\begin{lemma}\label{LemmaSymmKryuk}
Let $R$ be the ring from Subsection~\ref{SubsectionCodimUT2R}
and $T_\lambda = \begin{array}{|l|l|l|l|} \cline{1-4} 1 & 2 & \dots & n-1
\\ \cline{1-4} n \\ \cline{1-1} \end{array}$.
Then 
\begin{equation}\label{EqSymm}b_{T_\lambda} a_{T_\lambda} [x_1, x_n, x_2, x_3, \dots, x_{n-1}]
\equiv n(n-2)![x_1, x_n, x_2, x_3, \dots, x_{n-1}]\ (\mathop\mathrm{mod} P_n(\mathbb Z)\cap \Id(R)).\end{equation}
\end{lemma}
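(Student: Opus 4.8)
The plan is to compute the action of the Young symmetrizer $b_{T_\lambda} a_{T_\lambda}$ directly on the proper polynomial $w := [x_1, x_n, x_2, x_3, \dots, x_{n-1}]$, working modulo $P_n(\mathbb Z) \cap \Id(R)$ throughout. For the hook tableau $T_\lambda$ pictured (first row $1, 2, \dots, n-1$; second row the single box $n$), the row stabilizer $R_{T_\lambda}$ is $S_{\{1, \dots, n-1\}}$ and the column stabilizer $C_{T_\lambda}$ is $S_{\{1, n\}}$, so $a_{T_\lambda} = \sum_{\sigma \in S_{n-1}} \sigma$ (permuting $x_1, \dots, x_{n-1}$) and $b_{T_\lambda} = \id - (1\, n)$. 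First I would apply $a_{T_\lambda}$ to $w$: since by the remark preceding the lemma $\frac{\Gamma_n(\mathbb Z)}{\Gamma_n(\mathbb Z)\cap\Id(R,\mathbb Z)}$ is spanned modulo $\Id(R)$ by the $(n-1)$ elements $u_i := [x_i, x_n, x_1, x_2, \dots, \hat x_i, \dots, x_{n-1}]$ ($1 \leqslant i \leqslant n-1$), I expect that each of the $(n-1)!$ permutations $\sigma \in S_{n-1}$ sends $w$ to $\pm u_{\sigma^{-1}(1)}$ or to something that, using~(\ref{EqId1}) and the Jacobi identity, collapses to one of these. The key computational point is to pin down, for each $\sigma$, which $u_i$ appears and with what sign; I anticipate that for the many $\sigma$ fixing the position of $x_1$ as the innermost entry, $\sigma w \equiv u_1$ after reordering the outer variables (which is free modulo~(\ref{EqId1}), since once the two-commutator identity kills all products of two commutators, the tail variables past the single long commutator can be permuted freely — this is exactly the normal form from the proof of Theorem~\ref{TheoremCodimUT2R}).

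The main technical step is therefore a bookkeeping lemma: expressing $\tau w$ for a transposition-type rearrangement in terms of the $u_i$. Concretely, I would use the identity $[x_i, x_j] = -[x_j, x_i]$ together with $[a, b, c] = [a, c, b] + [a, [b,c]]$ and the fact that any bracket containing $[\,\cdot\,,[\,\cdot\,,\cdot\,]]$ nested inside another commutator, or any product of two commutators, lies in $\Id(R)$ by~(\ref{EqId1}) and the Jacobi identity — so only the "linear in each slot, single left-normed commutator" skeleton survives. Under this reduction the $S_{n-1}$-orbit of $w$ modulo $\Id(R)$ is small, and summing over $S_{n-1}$ I expect to get $a_{T_\lambda} w \equiv (n-2)! \sum_{i=1}^{n-1} \varepsilon_i u_i$ for suitable signs $\varepsilon_i$; by the evident symmetry among the variables $x_1, \dots, x_{n-1}$ (they play interchangeable roles in $a_{T_\lambda} w$ once we sum over all of $S_{n-1}$) all the $\varepsilon_i$ should be equal, giving $a_{T_\lambda} w \equiv (n-2)! \sum_{i=1}^{n-1} u_i$.

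Next I would apply $b_{T_\lambda} = \id - (1\, n)$. Here $(1\, n)$ swaps $x_1$ and $x_n$. Acting on $\sum_i u_i$, the term $u_1 = [x_1, x_n, x_2, \dots, x_{n-1}]$ maps to $[x_n, x_1, x_2, \dots, x_{n-1}] = -u_1$, while for $i \geqslant 2$, $(1\, n) u_i = [x_i, x_1, x_n, x_2, \dots, \hat x_i, \dots, x_{n-1}]$; using $[x_i, x_1, x_n] = [x_i, x_n, x_1] + [x_i, [x_1, x_n]]$ and discarding the Jacobi/product-of-commutators terms that vanish in $R$, this is $\equiv [x_i, x_n, x_1, x_2, \dots, \hat x_i, \dots, x_{n-1}] = u_i$ up to reordering the tail (again free modulo~(\ref{EqId1})). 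Hence $(1\,n)\sum_i u_i \equiv -u_1 + \sum_{i\geqslant 2} u_i$, so $(\id - (1\,n))\sum_i u_i \equiv 2 u_1$, and therefore $b_{T_\lambda} a_{T_\lambda} w \equiv 2(n-2)! \, u_1$. This is not yet $n(n-2)! u_1$, so the final step is to observe that in $\frac{\Gamma_n(\mathbb Z)}{\Gamma_n(\mathbb Z)\cap\Id(R,\mathbb Z)}$ the elements $u_1, \dots, u_{n-1}$ are not independent: a linear relation coming from the Jacobi identity (together with~(\ref{EqId1})) forces $\sum_{i=1}^{n-1} u_i \equiv 0$ or, more to the point, lets one rewrite $u_1$ so that the coefficient matches. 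The honest obstacle, and the place I would spend the most care, is determining the exact $\mathbb Z$-module of relations among the $u_i$ modulo $\Id(R)$ — i.e., proving $u_i \equiv u_j$ or the precise relation that converts the naive coefficient $2(n-2)!$ into $n(n-2)!$. I suspect the correct statement is that modulo $\Id(R)$ one has $u_i \equiv u_1$ for all $i$ (so $\frac{\Gamma_n(\mathbb Z)}{\Gamma_n(\mathbb Z)\cap\Id(R,\mathbb Z)}$ is cyclic, generated by $u_1$), in which case $a_{T_\lambda} w \equiv (n-1)! u_1$ and $b_{T_\lambda} a_{T_\lambda} w \equiv 2(n-1)! u_1 = n \cdot (n-2)! \cdot 2 u_1$; reconciling the factor $2$ will require re-examining whether $b_{T_\lambda}$ should be applied with the convention $(\id - (1\,n))$ or whether the relation among the $u_i$ in fact reads $\sum u_i \equiv 0$, which would kill the $-u_1$ contribution differently. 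In any case the strategy is fixed: reduce to the span $\langle u_1, \dots, u_{n-1}\rangle$, compute the $S_{n-1}$-sum, apply $\id - (1\,n)$, and use the Jacobi-type relation among the $u_i$ to read off the scalar.
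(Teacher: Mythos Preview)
Your overall strategy matches the paper's, and your computation of $a_{T_\lambda} w \equiv (n-2)!\sum_{i=1}^{n-1} u_i$ is correct. The genuine gap is in your application of $b_{T_\lambda}$ to $u_i$ for $i\geqslant 2$. You write
\[
[x_i, x_1, x_n] = [x_i, x_n, x_1] + [x_i, [x_1, x_n]]
\]
and then discard $[x_i, [x_1, x_n]]$ as a ``product-of-commutators term that vanishes in $R$''. It is not: $[x_i,[x_1,x_n]] = x_i[x_1,x_n]-[x_1,x_n]x_i$ is a commutator of a variable with a commutator, not a product of two commutators, and it is \emph{not} an identity of $R$ (take $x_1=x_i=e_{11}$, $x_n=e_{12}$). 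The point is rather that $[x_i,[x_1,x_n]] = -[x_1,x_n,x_i]$, so
\[
u_i - (1\,n)u_i \;=\; [x_i,x_n,x_1,\ldots] - [x_i,x_1,x_n,\ldots] \;=\; [x_1,x_n,x_i,x_2,\ldots,\hat x_i,\ldots,x_{n-1}] \;\equiv\; u_1,
\]
the last step being your own tail-reordering argument. Hence $(\id-(1\,n))\sum_i u_i \equiv 2u_1 + (n-2)u_1 = n\,u_1$, and the coefficient $n(n-2)!$ drops out directly. This is exactly the paper's computation.

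Your attempted rescue via relations among the $u_i$ cannot work: the $u_i$ are $\mathbb Z$-linearly independent modulo $\Id(R,\mathbb Z)$ (this is shown by the substitutions in the proof of Theorem~\ref{TheoremProperZSnUT2}, giving $\frac{\Gamma_n(\mathbb Z)}{\Gamma_n(\mathbb Z)\cap\Id(R,\mathbb Z)}\cong (\mathbb Z_m)^{n-1}$), so neither $\sum u_i\equiv 0$ nor $u_i\equiv u_j$ holds. The missing $(n-2)u_1$ does not come from relations among the $u_i$; it comes from the Jacobi term you threw away.
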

\begin{proof} Indeed,
\begin{equation*}\begin{split}b_{T_\lambda} a_{T_\lambda} [x_1, x_n, x_2, x_3, \dots, x_{n-1}]
\equiv b_{T_\lambda} (n-2)! \sum_{i=1}^{n-1} [x_i, x_n, x_1, x_2, \dots, \hat x_i, \dots, x_{n-1}]
= \\ (n-2)! \sum_{i=2}^{n-1} \left([x_i, x_n, x_1, x_2, \dots, \hat x_i, \dots, x_{n-1}]
-  [x_i, x_1, x_n, x_2, \dots, \hat x_i, \dots, x_{n-1}]\right) 
+ \\2(n-2)! [x_1, x_n, x_2, x_3, \dots, x_{n-1}]\equiv n(n-2)![x_1, x_n, x_2, x_3, \dots, x_{n-1}]\end{split}\end{equation*}
since, by the Jacobi identity, $[x_i, x_1, x_n]=[x_i, x_n, x_1]+[x_n, x_1, x_i]$.
\end{proof}

First, we determine the structure of $\frac{\Gamma_n(\mathbb Z)}{\Gamma_n(\mathbb Z)\cap\Id(R,\mathbb Z)}$
for $R=\UT_2(\mathbb Q)$.

\begin{lemma}\label{LemmaIsoZSnUT2Q} Let $T_\lambda = \begin{array}{|l|l|l|l|} \cline{1-4} 1 & 2 & \dots & n-1
\\ \cline{1-4} n \\ \cline{1-1} \end{array}$. Then
$\frac{\Gamma_n(\mathbb Z)}{\Gamma_n(\mathbb Z)\cap\Id(\UT_2(\mathbb Q),\mathbb Z)}
\cong (\mathbb ZS_n)b_{T_\lambda}a_{T_\lambda}$.
\end{lemma}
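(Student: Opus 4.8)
The plan is to identify $\frac{\Gamma_n(\mathbb Z)}{\Gamma_n(\mathbb Z)\cap\Id(\UT_2(\mathbb Q),\mathbb Z)}$ explicitly as a free Abelian group of rank $n-1$, exhibit it as a $\mathbb ZS_n$-module generated by the single element $g:=[x_1,x_n,x_2,x_3,\dots,x_{n-1}]$, and then match it with $(\mathbb ZS_n)b_{T_\lambda}a_{T_\lambda}$. First I would recall from the remark preceding Lemma~\ref{LemmaSymmKryuk} that, modulo $\Id(R,\mathbb Z)$ (here $R=\UT_2(\mathbb Q)$), the group $\frac{\Gamma_n(\mathbb Z)}{\Gamma_n(\mathbb Z)\cap\Id(R,\mathbb Z)}$ is spanned as a $\mathbb Z$-module by the $n-1$ elements $w_i:=[x_i,x_n,x_1,x_2,\dots,\hat x_i,\dots,x_{n-1}]$, $1\leqslant i\leqslant n-1$. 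I would check that these are $\mathbb Z$-linearly independent modulo $\Id(R,\mathbb Z)$: substituting $x_i=ae_{12}$ for one chosen index $i$ and $1$-like diagonal elements for the rest (as in the proof of Theorem~\ref{TheoremCodimUT2R}, using $1_{R_1}e_{11}$, $1_{R_2}e_{22}$) makes $w_i$ nonzero while killing $w_j$ for $j\ne i$; hence the quotient is free Abelian of rank $n-1$, and the $w_i$ form a basis. This uses the $\mathbb Q$-coefficients only to guarantee $\ell=m=0$, i.e. no torsion.

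Next I would show $(\mathbb ZS_n)b_{T_\lambda}a_{T_\lambda}$ is also free of rank $n-1$ and is generated by the image of $g$. Since $g$ lies in $\Gamma_n(\mathbb Z)$ and is (up to the factor $n(n-2)!$) the value of the symmetrizer on itself by Lemma~\ref{LemmaSymmKryuk}, the natural surjection $P_n(\mathbb Z)\to\frac{P_n(\mathbb Z)}{P_n(\mathbb Z)\cap\Id(R,\mathbb Z)}$ restricts to a $\mathbb ZS_n$-homomorphism $(\mathbb ZS_n)b_{T_\lambda}a_{T_\lambda}\to\frac{\Gamma_n(\mathbb Z)}{\Gamma_n(\mathbb Z)\cap\Id(R,\mathbb Z)}$ sending the generator $b_{T_\lambda}a_{T_\lambda}\cdot(x_1x_2\cdots x_n)$ (or rather the corresponding commutator pattern) to a nonzero multiple of $g$. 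I would then argue this map is surjective: the $S_n$-orbit of $g$ already produces all $w_i$ (permuting which variable sits in the position of $x_1$ cycles through $w_1,\dots,w_{n-1}$, with the Jacobi identity absorbing the reorderings, exactly as in the computation of Lemma~\ref{LemmaSymmKryuk}), and these span. For injectivity, I would compute that $(\mathbb ZS_n)b_{T_\lambda}a_{T_\lambda}$ — the integral Specht-type module $S(\lambda)$ for the hook $\lambda=(n-1,1)$ — is free Abelian of rank equal to $\dim_{\mathbb Q}S^{\mathbb Q}(n-1,1)=n-1$ by the hook length formula, so a surjection of free Abelian groups of the same finite rank between which no torsion can appear is an isomorphism; one needs only that the map does not drop rank, which follows because the $w_i$ are independent and are hit.

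The main obstacle I anticipate is bookkeeping the passage between the ``bracket'' description and the ``Specht module'' description: one must be careful that $b_{T_\lambda}a_{T_\lambda}$ acting on a monomial of $P_n(\mathbb Z)$, after reduction modulo the consequences of~(\ref{EqId1}) and the Jacobi identity, really lands in the span of the $w_i$ and that the resulting $\mathbb ZS_n$-action on $\{w_1,\dots,w_{n-1}\}$ coincides with the action on $S(\lambda)=(\mathbb ZS_n)b_{T_\lambda}a_{T_\lambda}$. Lemma~\ref{LemmaSymmKryuk} already does the crucial instance of this computation, so the remaining work is to verify that the generated submodule is the whole of $(\mathbb ZS_n)b_{T_\lambda}a_{T_\lambda}$ and that ranks match; the torsion-freeness on both sides (no $\mathbb Z_q$ summands for $q\neq 0$, by Theorem~\ref{TheoremCodimUT2R} with $\ell=m=0$, and the fact that Specht modules are subgroups of finitely generated free Abelian groups) then forces the homomorphism of free Abelian groups to be an isomorphism of $\mathbb ZS_n$-modules.
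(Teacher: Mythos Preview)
Your strategy of comparing ranks of free Abelian groups is sound, and the independence check for the $w_i$ via substitutions is exactly right. The gap is in the construction of the map and the surjectivity claim.

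You build the map as the \emph{restriction} of the natural quotient $P_n(\mathbb Z)\to P_n(\mathbb Z)/(P_n(\mathbb Z)\cap\Id)$ to the submodule $(\mathbb ZS_n)b_{T_\lambda}a_{T_\lambda}\cdot(x_1\cdots x_n)$ (or the same thing acting on $g$). As you yourself note, the generator is sent to a \emph{nonzero multiple} of $g$ --- by Lemma~\ref{LemmaSymmKryuk} that multiple is $n(n-2)!$. But then the $S_n$-orbit of the image produces $n(n-2)!\,w_i$, not the $w_i$ themselves; over $\mathbb Z$ this does not span, so your map is \emph{not} surjective (its image has index $(n(n-2)!)^{\,n-1}$). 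The sentence ``the $S_n$-orbit of $g$ already produces all $w_i$'' silently replaces $n(n-2)!\,g$ by $g$, and that is where the argument breaks. Since an injection of free Abelian groups of equal rank need not be an isomorphism, the rank comparison cannot rescue this direction.

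The paper fixes exactly this by defining the map the other way around: $\varphi(\sigma b_{T_\lambda}a_{T_\lambda}):=\sigma g$ directly, without ever letting $b_{T_\lambda}a_{T_\lambda}$ act on a polynomial. Now surjectivity is immediate (the target is cyclic over $\mathbb ZS_n$, generated by $g$), but well-definedness is no longer free: one must show that $u\,b_{T_\lambda}a_{T_\lambda}=0$ in $\mathbb ZS_n$ forces $u\,g\equiv 0$ modulo $\Id(\UT_2(\mathbb Q),\mathbb Z)$. This is where Lemma~\ref{LemmaSymmKryuk} is actually used: it gives $n(n-2)!\,u\,g\equiv u\,b_{T_\lambda}a_{T_\lambda}\,g=0$, and then torsion-freeness of $\UT_2(\mathbb Q)$ strips the integer factor. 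Injectivity is then handled by passing to $\mathbb Q$, where $(\mathbb QS_n)b_{T_\lambda}a_{T_\lambda}$ is irreducible, so the rational analog $\varphi_0$ is an isomorphism; if $u\,g\equiv 0$ then $\varphi_0(u\,b_{T_\lambda}a_{T_\lambda})=0$, hence $u\,b_{T_\lambda}a_{T_\lambda}=0$. Your rank argument would also establish injectivity once $\varphi$ is defined this way, so the only missing idea in your proposal is the correct definition of the map and the well-definedness check.
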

\begin{proof}
We claim that if $u b_{T_\lambda}a_{T_\lambda} = 0$ for some $u\in\mathbb ZS_n$,
then $$u [x_1, x_n, x_2, x_3, \dots, x_{n-1}] \in \Gamma_n(\mathbb Z) \cap \Id(\UT_2(\mathbb Q),\mathbb Z).$$
Indeed, by~(\ref{EqSymm}), $$n(n-2)!\, u [x_1, x_n, x_2, x_3, \dots, x_{n-1}] \equiv
u b_{T_\lambda} a_{T_\lambda} [x_1, x_n, x_2, x_3, \dots, x_{n-1}] = 0.$$
Since $\UT_2(\mathbb Q)$ has no torsion, $u [x_1, x_n, x_2, x_3, \dots, x_{n-1}] \equiv 0$
is a polynomial identity of $\UT_2(\mathbb Q)$.

Thus we can define the surjective homomorphism $\varphi \colon (\mathbb ZS_n)b_{T_\lambda}a_{T_\lambda}
\to \frac{\Gamma_n(\mathbb Z)}{\Gamma_n(\mathbb Z)\cap\Id(\UT_2(\mathbb Q),\mathbb Z)}$
by $\varphi(\sigma b_{T_\lambda}a_{T_\lambda}) = \sigma [x_1, x_n, x_2, x_3, \dots, x_{n-1}]$
for $\sigma \in S_n$. 

Analogously, we can define the surjective homomorphism
$$\varphi_0 \colon (\mathbb QS_n)b_{T_\lambda}a_{T_\lambda}
\to \frac{\Gamma_n(\mathbb Q)}{\Gamma_n(\mathbb Q)\cap\Id(\UT_2(\mathbb Q),\mathbb Q)}$$
by $\varphi(\sigma b_{T_\lambda}a_{T_\lambda}) = \sigma [x_1, x_n, x_2, x_3, \dots, x_{n-1}]$
for $\sigma \in S_n$. Since $(\mathbb QS_n)b_{T_\lambda}a_{T_\lambda}$ is an irreducible
$\mathbb QS_n$-module, $\varphi_0$ is an isomorphism of $\mathbb QS_n$-modules.
We claim that $\varphi$ is an isomorphism of $\mathbb ZS_n$-modules.

Indeed, suppose $$u [x_1, x_n, x_2, x_3, \dots, x_{n-1}] \in \Gamma_n(\mathbb Z) \cap \Id(\UT_2(\mathbb Q), \mathbb Z)$$ for some $u\in\mathbb ZS_n$. Then $\varphi_0(u b_{T_\lambda}a_{T_\lambda})=u [x_1, x_n, x_2, x_3, \dots, x_{n-1}] \in \Gamma_n(\mathbb Q) \cap \Id(\UT_2(\mathbb Q),\mathbb Z)$ and $u b_{T_\lambda}a_{T_\lambda}=0$.
Hence $\varphi$ is an isomorphism and the lemma is proven.
\end{proof}

\begin{theorem}\label{TheoremProperZSnUT2}
Let $R$ and $m$ be, respectively, the ring and the number from Subsection~\ref{SubsectionCodimUT2R}.
Then $\frac{\Gamma_n(\mathbb Z)}{\Gamma_n(\mathbb Z)\cap\Id(R,\mathbb Z)} \cong S(\lambda)/mS(\lambda)$
where $\lambda = (n-1, 1)$, for all $n \geqslant 2$.
\end{theorem}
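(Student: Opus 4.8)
The plan is to combine the explicit description of $\frac{\Gamma_n(\mathbb Z)}{\Gamma_n(\mathbb Z)\cap\Id(R,\mathbb Z)}$ as a cyclic $\mathbb ZS_n$-module generated by the hook symmetrizer (as in Lemmas~\ref{LemmaSymmKryuk}--\ref{LemmaIsoZSnUT2Q}) with a counting argument coming from Theorem~\ref{TheoremCodimUT2R} and Theorem~\ref{TheoremCodimProperAndOrdinary}. First I would record that, by the remark preceding Lemma~\ref{LemmaSymmKryuk}, the group $\frac{\Gamma_n(\mathbb Z)}{\Gamma_n(\mathbb Z)\cap\Id(R,\mathbb Z)}$ is generated as a $\mathbb Z$-module by the elements $[x_i,x_n,x_1,\dots,\hat x_i,\dots,x_{n-1}]$, $1\le i\le n-1$, and that by identities~(\ref{EqId2}),~(\ref{EqId3}) it is annihilated by $m$ (note $m$ annihilates $[R,R]\subseteq Me_{12}$, hence every proper polynomial evaluated on $R$). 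So it is a module over $\mathbb Z_m S_n$, generated by the image of $[x_1,x_n,x_2,\dots,x_{n-1}]$, and we get a surjection $\varphi\colon S(\lambda) \to \frac{\Gamma_n(\mathbb Z)}{\Gamma_n(\mathbb Z)\cap\Id(R,\mathbb Z)}$ with $mS(\lambda)\subseteq\ker\varphi$, exactly as in Lemma~\ref{LemmaIsoZSnUT2Q} but now using~(\ref{EqSymm}) together with the fact that $n(n-2)!$ is invertible... — here one must be slightly careful, since $\gcd(n(n-2)!,m)$ need not be $1$; instead I would argue directly that if $ub_{T_\lambda}a_{T_\lambda}=0$ in $\mathbb ZS_n$ then $u[x_1,x_n,x_2,\dots,x_{n-1}]\in\Id(R,\mathbb Z)$ by lifting to $\UT_2(\mathbb Q)$ exactly as in Lemma~\ref{LemmaIsoZSnUT2Q} (since $\UT_2(\mathbb Q)$ is torsion-free, the factor $n(n-2)!$ can be cancelled there). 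Thus $\varphi$ factors through $S(\lambda)/mS(\lambda)$, giving a surjection $\bar\varphi\colon S(\lambda)/mS(\lambda)\twoheadrightarrow \frac{\Gamma_n(\mathbb Z)}{\Gamma_n(\mathbb Z)\cap\Id(R,\mathbb Z)}$.

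Next I would show $\bar\varphi$ is injective by a cardinality count. Since both sides are finitely generated $\mathbb Z_m$-modules (direct sums of copies of $\mathbb Z_{m}$, because $S(\lambda)/mS(\lambda)\cong (\mathbb Z_m)^{f^\lambda}$ where $f^\lambda=n-1$ is the number of standard tableaux of hook shape $(n-1,1)$, using that $S(\lambda)$ is free abelian of rank $f^\lambda$), it suffices to show $\gamma_n(R,m)\ge n-1$, equivalently that $\frac{\Gamma_n(\mathbb Z)}{\Gamma_n(\mathbb Z)\cap\Id(R,\mathbb Z)}$ has at least $n-1$ summands $\mathbb Z_m$. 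For this I invoke Theorem~\ref{TheoremCodimProperAndOrdinary}: from $c_k(R,m)=\sum_{j=0}^k\binom{k}{j}\gamma_j(R,m)$ together with the value $c_n(R,m)=(n-2)2^{n-1}+1$ computed in Theorem~\ref{TheoremCodimUT2R} (for $m\ne \ell$; and one checks $\gamma_0=\gamma_1=0$ for proper identities and $\gamma_2(R,m)$ from the base case), one solves by inclusion–exclusion for $\gamma_n(R,m)$ and obtains $\gamma_n(R,m)=n-1$ for all $n\ge2$. Concretely, $\sum_{j=0}^n\binom{n}{j}(j-1)$ over the appropriate range reproduces $(n-2)2^{n-1}+1$ precisely when $\gamma_j=j-1$, which is an easy binomial identity. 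Hence $\bigl|\frac{\Gamma_n(\mathbb Z)}{\Gamma_n(\mathbb Z)\cap\Id(R,\mathbb Z)}\bigr|=m^{\,n-1}=\bigl|S(\lambda)/mS(\lambda)\bigr|$, so the surjection $\bar\varphi$ is an isomorphism.

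The main obstacle is the bookkeeping in the inclusion–exclusion step: one has to be sure that $\gamma_j(R,m)=j-1$ for all $2\le j\le n$ simultaneously, which follows by induction on $n$ from Theorem~\ref{TheoremCodimProperAndOrdinary} once the base cases $\gamma_0=\gamma_1=0$ and the ordinary codimensions are known, but the argument must be phrased carefully because the statement of Theorem~\ref{TheoremCodimProperAndOrdinary} mixes all degrees. An alternative that avoids most of this is to prove injectivity of $\bar\varphi$ directly: show that the $n-1$ generators $[x_i,x_n,x_1,\dots,\hat x_i,\dots,x_{n-1}]$ are $\mathbb Z_m$-independent modulo $\Id(R,\mathbb Z)$ by exhibiting substitutions in $R$ (taking one variable in $Me_{12}$ and the rest diagonal, as in the proof of Theorem~\ref{TheoremCodimUT2R}) that separate them; this is the cleaner route and I would present it in that order, using the counting identity only as a consistency check. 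Finally, I would note the special case $m=0$ (when $M$ is torsion-free): then the claim reads $\frac{\Gamma_n(\mathbb Z)}{\Gamma_n(\mathbb Z)\cap\Id(R,\mathbb Z)}\cong S(\lambda)$, which is Lemma~\ref{LemmaIsoZSnUT2Q} applied to $\UT_2(\mathbb Q)$ together with the observation that torsion-freeness of $M$ makes $\Id(R,\mathbb Z)\cap\Gamma_n(\mathbb Z)=\Id(\UT_2(\mathbb Q),\mathbb Z)\cap\Gamma_n(\mathbb Z)$ on proper polynomials.
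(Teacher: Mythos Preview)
Your preferred route (Route~B, via explicit substitutions) is exactly the paper's argument, just reorganized: the paper first uses the substitutions $x_i = ae_{12}$, $x_j = 1_{R_1}e_{11}$ for $j\ne i$ to show directly that the $n-1$ long commutators $[x_i,x_n,x_1,\dots,\hat x_i,\dots,x_{n-1}]$ generate a direct sum $(\mathbb Z_m)^{n-1}$, then observes that the natural surjection
\[
\frac{\Gamma_n(\mathbb Z)}{\Gamma_n(\mathbb Z)\cap\Id(\UT_2(\mathbb Q),\mathbb Z)}\;\longrightarrow\;\frac{\Gamma_n(\mathbb Z)}{\Gamma_n(\mathbb Z)\cap\Id(R,\mathbb Z)}
\]
(which exists because $\Id(\UT_2(\mathbb Q),\mathbb Z)\subseteq\Id(R,\mathbb Z)$ by the corollary to Theorem~\ref{TheoremCodimUT2R}) has kernel exactly $m$ times the source, and finishes with Lemma~\ref{LemmaIsoZSnUT2Q}. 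Your construction of $\varphi\colon S(\lambda)\to\frac{\Gamma_n(\mathbb Z)}{\Gamma_n(\mathbb Z)\cap\Id(R,\mathbb Z)}$ is precisely this composite, and your injectivity-by-substitution is the same substitution step done in the other order. So the two proofs coincide in content.

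Your Route~A (recovering $\gamma_n(R,m)=n-1$ by binomial inversion from Theorem~\ref{TheoremCodimProperAndOrdinary} and the explicit $c_n$ in Theorem~\ref{TheoremCodimUT2R}) is a genuine alternative that the paper does not pursue. It works, but note two bookkeeping points you should make explicit if you write it up: (i) the quantities $c_n(R,q)$ and $\gamma_n(R,q)$ are indexed by prime powers~$q$, not by~$m$ itself, so you must decompose $\mathbb Z_m$ and $\mathbb Z_\ell$ into primary pieces and run the inversion one prime at a time; (ii) the cardinality argument only applies when $m>0$, and your handling of the case $m=0$ should say ``$m=0$'' rather than ``$M$ torsion-free'' (these are not equivalent, though $m=0$ does force $\ell=0$ since $m\mid\ell$, which is what you actually need). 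Neither point is a real obstacle, and you correctly flag Route~B as the cleaner path.
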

\begin{proof} 
Recall that $\frac{\Gamma_n(\mathbb Z)}{\Gamma_n(\mathbb Z)\cap\Id(R,\mathbb Z)}$ is generated as a $\mathbb Z$-module by $[x_i, x_n, x_1, x_2, \dots, \hat x_i, \dots, x_{n-1}]$ where $1 \leqslant i \leqslant n-1$.
We exploit the same trick as in the proof of Theorem~\ref{TheoremCodimUT2R}.
Using the substitution $x_1=\dots =x_{i-1}=x_{i+1}= \dots = x_n = 1_{R_1}e_{11}$, $x_i = a e_{12}$ where $a\in M$, we obtain that $\frac{\Gamma_n(\mathbb Z)}{\Gamma_n(\mathbb Z)\cap\Id(R,\mathbb Z)}$
is the direct sum of $n-1$ cyclic groups isomorphic to $\mathbb Z_m$ and generated by 
$[x_i, x_n, x_1, x_2, \dots, \hat x_i, \dots, x_{n-1}]$ where $1 \leqslant i \leqslant n-1$.

By Theorem~\ref{TheoremCodimUT2R} and its corollary, we have the natural surjective
homomorphism $\frac{\Gamma_n(\mathbb Z)}{\Gamma_n(\mathbb Z)\cap\Id(\UT_2(\mathbb Q),\mathbb Z)}
\to \frac{\Gamma_n(\mathbb Z)}{\Gamma_n(\mathbb Z)\cap\Id(R,\mathbb Z)}$.
The remarks above imply that the kernel equals $m\frac{\Gamma_n(\mathbb Z)}{\Gamma_n(\mathbb Z)\cap\Id(\UT_2(\mathbb Q),\mathbb Z)}$. Now the theorem follows from Lemma~\ref{LemmaIsoZSnUT2Q}.
\end{proof}

Applying Theorems~\ref{TheoremZSnOrdinaryProper}, \ref{TheoremYoungsRule}, and \ref{TheoremProperZSnUT2}
we immediately get

\begin{theorem}\label{TheoremOrdinaryZSnUT2}
Let $R$, $\ell$, and $m$ be, respectively, the ring and the numbers from Subsection~\ref{SubsectionCodimUT2R}.
Then there exists a chain of $\mathbb ZS_n$-submodules in $\frac{P_n(\mathbb Z)}{P_n(\mathbb Z)\cap\Id(R,\mathbb Z)}$  with the set of factors that
consists of one copy of $\mathbb Z_\ell$ and $(\lambda_1-\lambda_2+1)$ copies of $S(\lambda_1, \lambda_2, \lambda_3)/mS(\lambda_1, \lambda_2, \lambda_3)$
where $(\lambda_1, \lambda_2, \lambda_3) \vdash n$, $\lambda_2 \geqslant 1$, $\lambda_3 \in \lbrace 0,1 \rbrace$.
\end{theorem}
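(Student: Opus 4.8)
The plan is to assemble the three earlier results mechanically, so the main work is bookkeeping about which partitions appear. First I would invoke Theorem~\ref{TheoremZSnOrdinaryProper}: for the unitary ring $R$ with $\ch R = \ell$, the module $\frac{P_n(\mathbb Z)}{P_n(\mathbb Z)\cap\Id(R,\mathbb Z)}$ has a filtration whose factors are $M_0/M_2 \cong \mathbb Z_\ell$ (trivial action) together with $M_t/M_{t+1} \cong \bigl(\frac{\Gamma_t(\mathbb Z)}{\Gamma_t(\mathbb Z)\cap\Id(R,\mathbb Z)}\bigr)\uparrow S_n$ for $2\leqslant t\leqslant n$. By Theorem~\ref{TheoremProperZSnUT2} we know $\frac{\Gamma_t(\mathbb Z)}{\Gamma_t(\mathbb Z)\cap\Id(R,\mathbb Z)} \cong S(t-1,1)/mS(t-1,1)$ for $t\geqslant 2$ (and this group is $0$ when $t=1$, consistent with $\Gamma_1(\mathbb Z)=0$, so the $t=1$ factor is absent). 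Substituting, each $M_t/M_{t+1}$ with $t\geqslant 2$ is $\bigl(S(t-1,1)/mS(t-1,1)\bigr)\uparrow S_n$.

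Next I would refine each of these induced factors using Theorem~\ref{TheoremYoungsRule} with $\lambda = (t-1,1)\vdash t$ and the ambient degree $n$. That theorem gives a further series whose factors are $S(\nu)/mS(\nu)$, one for each $\nu\vdash n$ with
$$\lambda_n\leqslant \nu_n\leqslant \lambda_{n-1}\leqslant \nu_{n-1}\leqslant\dots\leqslant \lambda_2\leqslant\nu_2\leqslant\lambda_1\leqslant\nu_1.$$
Since $\lambda = (t-1,1)$ has $\lambda_1 = t-1$, $\lambda_2 = 1$, and $\lambda_i = 0$ for $i\geqslant 3$, the interlacing conditions force $\nu_i = 0$ for $i\geqslant 4$, $0\leqslant \nu_3\leqslant 1$, $1\leqslant \nu_2\leqslant t-1$, and $\nu_1\geqslant t-1$; together with $|\nu| = n$ this pins down $\nu = (\lambda_1,\lambda_2,\lambda_3)$ with $\lambda_2\geqslant 1$, $\lambda_3\in\{0,1\}$, $\lambda_1\geqslant\lambda_2$. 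Concatenating the filtration from Theorem~\ref{TheoremZSnOrdinaryProper} with these refinements (a standard Schreier-type refinement of the chain of $\mathbb ZS_n$-submodules, legitimate since each $M_t/M_{t+1}$ is a $\mathbb ZS_n$-module and we are just splicing a submodule series of it back into the big chain) yields a single chain of $\mathbb ZS_n$-submodules of $\frac{P_n(\mathbb Z)}{P_n(\mathbb Z)\cap\Id(R,\mathbb Z)}$ whose factor set is one copy of $\mathbb Z_\ell$ plus the collection of all $S(\lambda_1,\lambda_2,\lambda_3)/mS(\lambda_1,\lambda_2,\lambda_3)$ arising this way, each counted with multiplicity equal to the number of $t$ producing it.

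Finally I would count multiplicities. A partition $\nu = (\lambda_1,\lambda_2,\lambda_3)\vdash n$ with $\lambda_2\geqslant 1$ and $\lambda_3\in\{0,1\}$ occurs as a factor inside the refinement of $\bigl(S(t-1,1)/mS(t-1,1)\bigr)\uparrow S_n$ precisely when the interlacing with $\lambda=(t-1,1)$ holds, i.e.\ when $\lambda_2\leqslant t-1\leqslant \lambda_1$; hence the number of admissible $t$ is $(\lambda_1 - \lambda_2 + 1)$. This gives exactly $(\lambda_1-\lambda_2+1)$ copies of $S(\lambda_1,\lambda_2,\lambda_3)/mS(\lambda_1,\lambda_2,\lambda_3)$, which is the claimed count. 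The only genuinely delicate point is making sure the interlacing analysis above is complete — i.e.\ that no partition with a third part larger than $1$ or with more than three parts slips through — but this is immediate from $\lambda_2 = 1$ and $\lambda_i = 0$ for $i\geqslant 3$ in Young's rule, so there is no real obstacle; the proof is essentially "apply Theorems~\ref{TheoremZSnOrdinaryProper}, \ref{TheoremYoungsRule}, \ref{TheoremProperZSnUT2} and collect terms," as the sentence preceding the statement already advertises.
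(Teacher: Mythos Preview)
Your proposal is correct and follows exactly the approach the paper uses: the paper's entire proof is the single sentence ``Applying Theorems~\ref{TheoremZSnOrdinaryProper}, \ref{TheoremYoungsRule}, and~\ref{TheoremProperZSnUT2} we immediately get,'' and you have simply written out the bookkeeping behind that sentence. The multiplicity count via the interlacing condition $\lambda_2\leqslant t-1\leqslant \lambda_1$ is exactly right; the only cosmetic point is that Theorem~\ref{TheoremYoungsRule} is stated for $t<n$, but the case $t=n$ is trivial since $M_n\cong S(n-1,1)/mS(n-1,1)$ with no induction needed, so nothing is missing.
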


\section{Grassmann algebras}\label{SectionGrassmannR}

Let $R$ be a commutative ring with a unit element $1_R$, $\ch R = \ell$ where either $\ell$ is an odd natural number or $\ell = 0$.
  We define the Grassman algebra~$G_R$ over a ring $R$ as the $R$-algebra with a unit, generated by
  the countable set of generators $e_i$, $i\in\mathbb N$, and the anti-commutative relations
  $e_{i} e_{j} = -e_{j}e_{i}$, $i,j\in\mathbb N$.
 Here we consider the same questions as for the upper triangular matrices.
 \subsection{Codimensions and polynomial identities} \label{grassmann}
This lemma is known but we provide its proof for the reader's convenience.
\begin{lemma}\label{identities}
The polynomial identity $[y,x][z,t] + [y,z][x,t]\equiv 0$
is a consequence of $[x_{1},x_{2},x_{3}] \equiv 0$.
In particular, $[x,y]u[z,t] + [x,t]u[z,y]\equiv 0$
for all $u \in \mathbb Z \langle X \rangle$.
\end{lemma}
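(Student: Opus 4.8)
The plan is to prove the identity $[y,x][z,t] + [y,z][x,t] \equiv 0$ as a consequence of the triple commutator identity $[x_1,x_2,x_3] \equiv 0$, and then derive the ``absorbed'' version by substituting more elaborate arguments. The key observation is that modulo $[x_1,x_2,x_3]\equiv 0$ one may freely move variables inside commutators as though the commutator $[a,b]$ were symmetric up to sign in a very strong sense: any long commutator of length $\geqslant 3$ vanishes, so when we expand a product of two commutators we get extra terms that are all of the form (something)$\cdot[\text{triple commutator}]\cdot$(something) or can be rewritten that way.

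First I would record the elementary consequences of $[x_1,x_2,x_3]\equiv 0$. Since all triple (and hence all longer) commutators vanish, we have $[uv, w] = u[v,w] + [u,w]v \equiv u[v,w] + [u,w]v$ with the subtlety that $[u,w]$ and $[v,w]$ are central modulo the identity when $u,v,w$ are among the generators; more precisely $[[a,b],c]\equiv 0$ means $[a,b]$ commutes with every variable modulo $\Id$. In particular $w[a,b]\equiv[a,b]w$ for all $w$, so products of commutators lie in the ``center'' relative to the relatively free algebra. The crucial computation is then: expand $[yx,z,t]$, which is $\equiv 0$ since it is a triple commutator (well, $[yx,z]$ then bracket with $t$ — actually I would instead expand $[[y,z],xt]$ or a similar length-$3$ commutator). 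The cleanest route: compute $0 \equiv [xy, z, t]$ two ways, or better, start from $0\equiv [x,y,z]$ applied after a product substitution. I would substitute into $[a,b,c]\equiv 0$ the values $a \mapsto xy$ (or use bilinearity of the bracket) and expand using the Leibniz rule $[xy,z] = x[y,z]+[x,z]y$; then bracketing once more with $t$ and using centrality of single commutators gives $0\equiv [x[y,z]+[x,z]y, t]= x[[y,z],t] + [x,t][y,z] + [x,z][y,t] + [[x,z],t]y \equiv [x,t][y,z] + [x,z][y,t]$, because the terms with a double bracket inside vanish. Renaming variables ($x\leftrightarrow y$, or relabeling to match the statement) yields exactly $[y,x][z,t] + [y,z][x,t]\equiv 0$.

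For the ``in particular'' clause, I want $[x,y]u[z,t] + [x,t]u[z,y] \equiv 0$ for arbitrary $u\in\mathbb Z\langle X\rangle$. Since single commutators are central modulo $\Id$ (that is, $u[z,t]\equiv[z,t]u$), we have $[x,y]u[z,t] \equiv u[x,y][z,t]$ and $[x,t]u[z,y]\equiv u[x,t][z,y] = u[x,t][z,y]$. So the left side is $\equiv u\bigl([x,y][z,t] + [x,t][z,y]\bigr)$. Now apply the first part of the lemma with the appropriate naming: the identity $[y,x][z,t]+[y,z][x,t]\equiv 0$ holds for all substitutions, so substituting yields $[x,y][z,t] + [x,t][z,y]$ — I need to check the signs carefully; using $[z,y] = -[y,z]$ and $[x,t] = -[t,x]$ one rearranges the already-proved relation into $[x,y][z,t] = -[x,t][z,y]$, hence the bracketed factor vanishes modulo $\Id$, and therefore so does $[x,y]u[z,t]+[x,t]u[z,y]$.

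The main obstacle is purely bookkeeping: keeping the signs and the variable names straight when expanding the length-three commutator via the Leibniz rule, and justifying the centrality claim ($w[a,b]\equiv[a,b]w$) cleanly — this is itself just the statement $[[a,b],w]\equiv 0$, an instance of the hypothesis, so there is no real difficulty, only care. I would also make explicit that ``$u\in\mathbb Z\langle X\rangle$ arbitrary'' is fine because centrality of products of commutators lets us pull $u$ out of the middle regardless of its form. No genuinely hard step is expected; the lemma is a short formal manipulation in the relatively free algebra modulo $[x_1,x_2,x_3]\equiv 0$.
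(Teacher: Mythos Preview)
Your proof is correct and follows essentially the same approach as the paper: both arguments substitute a product into the triple-commutator identity and expand via the Leibniz rule (the paper expands $[x,yt,z]$ while you expand $[xy,z,t]$, but this is only a cosmetic difference), and both handle the ``in particular'' clause by using that single commutators are central modulo the identity, so $u$ can be moved out of the middle. Your sign-checking and renaming are fine once one uses centrality to commute $[x,t][y,z]$ into $[y,z][x,t]$.
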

\begin{proof}
Note that \begin{equation}\begin{split}
 [x, yt, z] = [[x, y]t, z]+[y[x, t], z]
=  [x, y, z]t+ [x, y][t, z]+ \\ [y,z][x, t]+y[x,t,z]\equiv [x, y][t, z]+[y,z][x, t]
= [y, x][z, t]+[y,z][x, t]\end{split}
\end{equation}
modulo $[x_{1},x_{2},x_{3}] \equiv 0$. (Here we have used Jacobi's identity too.)
Hence
\begin{equation}\begin{split}[x,y]u[z,t] + [x,t]u[z,y] = [x,y][u,[z,t]] + [x,y][z,t]u +\\ [x,t][u,[z,y]]
+[x,t][z,y]u\equiv [x,y][z,t]u + [x,t][z,y]u \equiv 0.\end{split}
\end{equation}

\end{proof}
\begin{theorem}\label{TheoremCodimGrass}
All polynomials from $P_n(\mathbb Z) \cap \Id(G_{R}, \mathbb Z)$, $n\in\mathbb N$,
are consequences of the left hand sides of the following polynomial
identities in $R$:
\begin{equation}\label{EqId1bis}[x,y,z]\equiv 0,
\end{equation} \begin{equation}\label{EqId2bis}\ell x\equiv 0.\end{equation}
 Moreover, $\frac{P_n(\mathbb Z)}{P_n(\mathbb Z) \cap \Id(G_{R}, \mathbb Z)} \cong \mathbb (\mathbb Z_\ell)^{2^{n-1}}$.
\end{theorem}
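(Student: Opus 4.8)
The plan is to follow the same strategy as in the proof of Theorem~\ref{TheoremCodimUT2R}, exploiting that modulo the consequences of $[x,y,z]\equiv 0$ every multilinear polynomial can be reduced to a canonical form, and that the canonical forms can be separated by evaluations on $G_R$. First I would use Lemma~\ref{identities} together with the identities $yx = [y,x]+xy$ and $[\ldots]x = x[\ldots] + [[\ldots],x]$ to rewrite an arbitrary monomial from $P_n(\mathbb Z)$: all commutators $[x_i,x_j]$ collapse to length two, they commute with everything up to lower-degree correction terms, and by Lemma~\ref{identities} any product $[x_a,x_b]u[x_c,x_d]$ can be re-sorted. The outcome should be that modulo the $T$-ideal generated by $[x,y,z]$ and $\ell x$, the space $P_n(\mathbb Z)/(P_n(\mathbb Z)\cap\Id(G_R,\mathbb Z))$ is spanned by the polynomials
\begin{equation*}
x_{i_1}x_{i_2}\cdots x_{i_k}\,[x_{j_1},x_{j_2}][x_{j_3},x_{j_4}]\cdots[x_{j_{2s-1}},x_{j_{2s}}]
\end{equation*}
with $i_1<\cdots<i_k$, $j_1<j_2<\cdots<j_{2s}$, $k+2s=n$; call this spanning set $\Xi$. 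Counting: choosing which $2s$ of the $n$ variables go into commutators and then pairing them in increasing order in $s$ disjoint transpositions of consecutive chosen elements gives exactly one canonical monomial per subset of even size, so $|\Xi| = \sum_{s\ge 0}\binom{n}{2s} = 2^{n-1}$.

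Next I would show that these $2^{n-1}$ elements are independent modulo $\ell$, i.e. that the kernel of the surjection $\varphi\colon \mathbb Z\Xi \to P_n(\mathbb Z)/(P_n(\mathbb Z)\cap\Id(G_R,\mathbb Z))$ is exactly $\ell\,\mathbb Z\Xi$. One inclusion is clear since $\ell x\equiv 0$ holds in $G_R$. For the reverse, suppose a $\mathbb Z$-linear combination $f_1$ of elements of $\Xi$ lies in $\Id(G_R,\mathbb Z)$ but is not in $\ell\,\mathbb Z\Xi$; pick a term $g = x_{i_1}\cdots x_{i_k}[x_{j_1},x_{j_2}]\cdots[x_{j_{2s-1}},x_{j_{2s}}]$ occurring in $f_1$ with coefficient $\beta$ not divisible by $\ell$ and with $s$ maximal among such terms. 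I would substitute $x_{j_{2r-1}} = e_{a_r}$ and $x_{j_{2r}} = e_{b_r}$ for $2s$ distinct odd generators $e_{a_1},e_{b_1},\ldots,e_{a_s},e_{b_s}$ of $G_R$, and $x_i = 1_{G_R}$ for all remaining variables. Any canonical term with strictly more commutators than $g$ either involves a commutator of two variables both evaluated to $1$ (hence vanishes) or reuses a generator an even number of times inside a single commutator — the point is to choose the assignment so that $g$ is the unique surviving term; since $[e_a,e_b] = 2e_ae_b$ and products of distinct $e$'s are nonzero in $G_R$, the surviving value is $2^s\beta\,e_{a_1}e_{b_1}\cdots e_{a_s}e_{b_s}\ne 0$ because $\ell$ is odd (so $2$ is invertible in $R$) and $\beta\not\equiv 0\pmod\ell$. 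This contradicts $f_1\in\Id(G_R,\mathbb Z)$.

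The main obstacle is the bookkeeping in this separation step: one has to argue carefully that for the chosen maximal-$s$ term $g$ there is an evaluation killing every other canonical monomial simultaneously — terms with fewer commutators are handled by the maximality of $s$ together with the fact that substituting distinct $e$'s for paired variables forces commutator factors, while terms with the same number of commutators but a different variable partition must be ruled out by checking that some $e_a$ gets repeated an even number of times or a $[1_{G_R},\,\cdot\,]$ appears. This is the analog of the "greatest $k$" argument in Theorem~\ref{TheoremCodimUT2R} and of the standard combinatorics for the Grassmann algebra over a field, adapted to keep track of the torsion modulus $\ell$; once it is in place, $\ker\varphi = \ell\,\mathbb Z\Xi$ gives $P_n(\mathbb Z)/(P_n(\mathbb Z)\cap\Id(G_R,\mathbb Z)) \cong (\mathbb Z_\ell)^{|\Xi|} = (\mathbb Z_\ell)^{2^{n-1}}$, and the fact that every multilinear identity of $G_R$ is a consequence of \eqref{EqId1bis}--\eqref{EqId2bis} follows because the reduction procedure used only consequences of those two identities.
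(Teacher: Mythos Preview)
Your overall strategy is exactly the paper's: reduce to the spanning set $\Xi$ via Lemma~\ref{identities}, then separate the elements of $\Xi$ by carefully chosen evaluations in $G_R$, using that $\ell$ is odd to cancel the powers of $2$ coming from $[e_a,e_b]=2e_ae_b$. The counting $|\Xi|=2^{n-1}$ is also fine.

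However, the separation step has its extremal choice backwards, and as written it does \emph{not} isolate the chosen term. You pick $g$ with $s$ \emph{maximal} among the terms whose coefficient is not divisible by $\ell$, set the $2s$ commutator variables of $g$ equal to distinct generators $e_{a_r},e_{b_r}$, and set the remaining variables equal to $1_{G_R}$. Under this substitution any other canonical term $g'$ with commutator-variable set contained in $\{j_1,\dots,j_{2s}\}$ survives: its value is $\pm 2^{s'}\, e_{a_1}e_{b_1}\cdots e_{a_s}e_{b_s}$, the \emph{same} basis monomial as the value of $g$. Such $g'$ have $s'\le s$, and your maximality hypothesis says nothing about their coefficients (it only controls terms with $s'>s$, which in any case vanish because one of their commutator arguments is $1_{G_R}$). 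So the coefficient you read off is $2^s\beta+\sum_{g'}\pm 2^{s'}\beta_{g'}$, and being divisible by $\ell$ does not force $\ell\mid\beta$. Concretely, already for $n=4$ the combination $\alpha\, x_1x_2x_3x_4+\beta\,[x_1,x_2][x_3,x_4]$ evaluated at $x_i=e_i$ gives $(\alpha+4\beta)e_1e_2e_3e_4$, which does not separate $\beta$ from $\alpha$.

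The fix is to reverse the extremal: choose $g$ with $k$ \emph{maximal} (equivalently $s$ minimal) among terms whose coefficient is not divisible by $\ell$, and use the same substitution. Then a surviving term $g'$ still satisfies $\{j'_1,\dots,j'_{2s'}\}\subseteq\{j_1,\dots,j_{2s}\}$, hence $k'\ge k$; by maximality of $k$ every such $g'$ with $k'>k$ has coefficient divisible by $\ell$ and so contributes $0$ in $G_R$, while $k'=k$ forces $g'=g$. Now the evaluation equals $2^m\beta\, e_1\cdots e_{2m}$ on the nose, and $\ell\mid 2^m\beta$ together with $\gcd(2,\ell)=1$ gives the desired contradiction. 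This is precisely the ``greatest $k$'' argument you allude to from Theorem~\ref{TheoremCodimUT2R}; you simply need to transplant it with the inequality going the right way.
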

\begin{proof}
Define $G_R^{(0)} = \langle e_{i_1} e_{i_2}\dots e_{i_{2k}} \mid k\in\mathbb Z_+\rangle_R$
and $G_R^{(1)} = \langle e_{i_1} e_{i_2}\dots e_{i_{2k+1}} \mid k\in\mathbb Z_+\rangle_R$.
Clearly, $G_R= G_R^{(0)} \oplus G_R^{(1)}$ (direct sum of $R$-submodules), $[G_R,G_R]\subseteq G_R^{(0)}$,
  $G_R^{(0)} = Z (G_{R})$.
   Hence $[x_{1},x_{2},x_{3}]\equiv 0$ is a polynomial identity. 
   Obviously,  (\ref{EqId2bis}) is a polynomial identity too.
   
Let  \begin{equation*}\begin{split}\Xi = \{ x_{i_{1}} \dots x_{i_{k}} [x_ {j_{1}},x_{j_{2}}] \dots [x_{j_{2m-1}}, x_{j_{2m}}] \mid i_{1} < \dots < i_{k}, \\ j_{1} < \dots < j_{2m},\ k+2m = n,\ k,m \in \mathbb Z_+ \}\subset P_n(\mathbb Z).\end{split}\end{equation*}
   
By Lemma~\ref{identities}, every
polynomial from $P_n(\mathbb Z)$ can be presented modulo (\ref{EqId1bis})
as a linear combination of polynomials from $\Xi$.
For example,  \begin{equation*}\begin{split}x_3 x_2 x_4 x_1 =  -[x_2, x_3] x_4 x_1
+ x_2 x_3 x_4 x_1 = \\ ([x_2, x_3] [x_1, x_4] - [x_2, x_3] x_1 x_4)
+ (x_2 x_3 x_1 x_4 - x_2 x_3 [x_1, x_4]) \equiv  \\
-[x_2, x_1] [x_3, x_4] - x_1 x_4 [x_2, x_3]
+ x_2 x_1 x_3 x_4 - x_2 [x_1, x_3] x_4 - x_2 x_3 [x_1, x_4]
\equiv \\  [x_1, x_2] [x_3, x_4] - x_1 x_4 [x_2, x_3] +
 x_1 x_2 x_3 x_4 - [x_1, x_2] x_3 x_4 - x_2  x_4 [x_1, x_3] - x_2 x_3 [x_1, x_4]
\equiv  \\ [x_1, x_2] [x_3, x_4] - x_1 x_4 [x_2, x_3]+
 x_1 x_2 x_3 x_4 - x_3 x_4[x_1, x_2]  - x_2  x_4 [x_1, x_3] - x_2 x_3 [x_1, x_4]
.\end{split}\end{equation*}

Consider the free Abelian group $\mathbb Z\Xi$
with the basis $\Xi$.
Now we have the surjective homomorphism $\varphi \colon \mathbb Z\Xi
\to \frac{P_n(\mathbb Z)}{P_n(\mathbb Z) \cap \Id(G_R, \mathbb Z)}$
where $\varphi(f)$ is the image of $f \in \Xi $ in 
$\frac{P_n(\mathbb Z)}{P_n(\mathbb Z) \cap \Id(G_R, \mathbb Z)}$.
We claim that $\ker \varphi$ is generated by $\ell f$ where $f\in \Xi$.

Suppose that a linear combination $f_1$ of elements from $\Xi$ is a polynomial identity,
however $f_1$ is not a linear combination of $\ell f$, $f\in \Xi$.
Now we find $$f_2 := x_{i_{1}} \dots x_{i_{k}} [x_ {j_{1}},x_{j_{2}}] \dots [x_{j_{2m-1}}, x_{j_{2m}}] \in \Xi$$
with the greatest $k$ such that the coefficient $\beta$ of $f_2$ in $f_1$ is not a multiple of $\ell$.
Then we substitute $x_{i_{1}}=\dots =x_{i_{k}}= 1_{G_R}$,  $x_{j_i}=e_i$, $1\leqslant i \leqslant 2m$.
Our choice of $f_2$ implies that $f_2$ is the only summand  in $f_1$ that could be nonzero under this substitution. Hence the value of $f_1$ equals $ (2^m \beta\,  1_R) e_1 e_2 \dots e_m = 0$.
However, $G_R$ is a free $R$-module and $e_1 e_2 \dots e_m$ is one of its basis elements.
Therefore $2^m \beta\,  1_R = 0$, $\ell \mid (2^m \beta)$ and $\ell \mid \beta$ since $2 \nmid \ell$.
We get a contradiction.

Thus $\ker \varphi$ is generated by $\ell f$, $f\in \Xi$.
In particular,
$\frac{P_n(\mathbb Z)}{P_n(\mathbb Z) \cap \Id(G_R, \mathbb Z)} \cong  (\mathbb Z_\ell)^{|\Xi|}$ and every multilinear polynomial identity of $G_R$ is a consequence
of (\ref{EqId1bis}) and (\ref{EqId2bis}).

We now calculate $| \Xi |$.
The number of these polynomials equals the number of choices of $x_{i_{1}}, \dots, x_{i_{k}}$. If $n$ is odd, this number equals
${n \choose 1} + {n \choose 3} + \dots + {n \choose n}.$
If $n$ is even, the number equals ${n \choose 0} + {n \choose 2} + \dots + {n \choose n}$. But the both are equal to $2^{n-1}$. Indeed, denote $s_{0} = \sum\limits_{i \mbox{ \tiny{even}}} { n \choose i }$ and $s_{1} = \sum\limits_{i \mbox{ \tiny{odd}}} { n \choose i }$. Then $2^{n} = (1+1)^{n} = s_{0} + s_{1}$ and $0 =(1-1)^{n}=s_{0} -s_{1}$. So $| \Xi |=s_{0}=s_{1}= 2^{n-1}$.
\end{proof}

\subsection{$\mathbb{Z}S_{n}$-modules}

First we determine the structure of $\mathbb{Z}S_{n}$-modules of proper polynomial
functions.
\begin{theorem} \label{TheoremProperZSnR}
 Let $G_{R}$ be the Grassmann algebra over $R$. 
Let  $\lambda = (1^{2m})$ and $T_\lambda = \begin{array}{|l|} \cline{1-1} 1 \\  \cline{1-1}  2 \\  \cline{1-1}  \vdots \\  \cline{1-1} 2m\\ \cline{1-1} \end{array}$. Then $\frac{\Gamma_{2m}(\mathbb Z)}{\Gamma_{2m}(\mathbb Z) \cap \Id(G_{R}, \mathbb Z)} \cong  S(\lambda)/  \ell S(\lambda) $ for all $m\in\mathbb N$, where $\ell = \ch R$,
and $\frac{\Gamma_{2m+1}(\mathbb Z)}{ \Gamma_{2m+1}(\mathbb Z) \cap \Id(G_R, \mathbb Z)}=0$
for all $m\in\mathbb Z_+$.
\end{theorem}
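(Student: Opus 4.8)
The plan is to run the same argument as in the proof of Theorem~\ref{TheoremProperZSnUT2}, substituting the Grassmann algebra $G_{\mathbb Q}$ over the rationals for $\UT_2(\mathbb Q)$ and the weight $(1^{2m})$ hook (actually a single column) in place of $(n-1,1)$. First I would treat the odd-degree case: since $[G_R,G_R]\subseteq G_R^{(0)} = Z(G_R)$, every long commutator of length $\geqslant 3$ is a polynomial identity, so $\Gamma_n(\mathbb Z)/(\Gamma_n(\mathbb Z)\cap \Id(G_R,\mathbb Z))$ is spanned by products of disjoint $2$-commutators $[x_{i_1},x_{i_2}]\cdots[x_{i_{2m-1}},x_{i_{2m}}]$; for $n=2m+1$ odd there is a left-over variable $x_k$, and using $[x,y]u \equiv $ (a proper polynomial of the same type by Lemma~\ref{identities}) together with $[x_k,[x_i,x_j]]=0$ one sees the only surviving proper polynomials already lie in $\Gamma_n \cap \Id(G_R,\mathbb Z)$, hence the quotient is $0$. (Concretely: a proper polynomial is a $\mathbb Z$-combination of long commutators; any long commutator of length $\geqslant 3$ vanishes, and a product of $2$-commutators of total degree $2m+1$ is impossible, so $\Gamma_{2m+1}(\mathbb Z)/(\Gamma_{2m+1}(\mathbb Z)\cap\Id(G_R,\mathbb Z))=0$.)

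For $n=2m$ I would first pin down the structure over $\mathbb Q$, i.e. show
\[
\frac{\Gamma_{2m}(\mathbb Z)}{\Gamma_{2m}(\mathbb Z)\cap\Id(G_{\mathbb Q},\mathbb Z)}\cong (\mathbb ZS_{2m}) b_{T_\lambda} a_{T_\lambda}\cong S(\lambda),\qquad \lambda=(1^{2m}),
\]
by the exact analogue of Lemma~\ref{LemmaIsoZSnUT2Q}. This needs a Grassmann version of Lemma~\ref{LemmaSymmKryuk}: compute $b_{T_\lambda} a_{T_\lambda}$ applied to the generator $[x_1,x_2][x_3,x_4]\cdots[x_{2m-1},x_{2m}]$ and show it is (a nonzero integer multiple of) that generator modulo $\Id(G_{\mathbb Q},\mathbb Z)$. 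Here $a_{T_\lambda}=\id$ (the rows are singletons) and $b_{T_\lambda}=\sum_{\sigma\in S_{2m}}(\sign\sigma)\sigma$ is the full antisymmetrizer; applying it to the product of $2$-commutators and using the identity $[x,y]u[z,t]+[x,t]u[z,y]\equiv 0$ of Lemma~\ref{identities} (which says the product of $2$-commutators is, modulo $\Id$, totally antisymmetric up to the factor $2^m$ coming from $[x_i,x_j]=-[x_j,x_i]$), one gets a scalar of the form $\pm 2^m m!$ times the generator. Since $2\nmid\ell$ this scalar is invertible over $\mathbb Q$ and coprime to $\ell$, which is exactly what the argument needs. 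Because $(\mathbb QS_{2m})b_{T_\lambda}a_{T_\lambda}\cong S^{\mathbb Q}(1^{2m})$ is irreducible, the corresponding surjection over $\mathbb Q$ is an isomorphism, and then — as in Lemma~\ref{LemmaIsoZSnUT2Q}, using that $G_{\mathbb Q}$ is torsion-free — the surjection $(\mathbb ZS_{2m})b_{T_\lambda}a_{T_\lambda}\to \Gamma_{2m}(\mathbb Z)/(\Gamma_{2m}(\mathbb Z)\cap\Id(G_{\mathbb Q},\mathbb Z))$ is an isomorphism of $\mathbb ZS_{2m}$-modules.

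Finally I would pass from $G_{\mathbb Q}$ to $G_R$. By Theorem~\ref{TheoremCodimGrass} (and its identity $[x,y,z]\equiv0$, which holds in $G_{\mathbb Q}$ too) there is a natural surjection
\[
\frac{\Gamma_{2m}(\mathbb Z)}{\Gamma_{2m}(\mathbb Z)\cap\Id(G_{\mathbb Q},\mathbb Z)}\twoheadrightarrow \frac{\Gamma_{2m}(\mathbb Z)}{\Gamma_{2m}(\mathbb Z)\cap\Id(G_R,\mathbb Z)}.
\]
To compute its kernel, substitute $x_{j_i}=e_i$ into the generator (as in the proof of Theorem~\ref{TheoremCodimGrass}): since $G_R$ is a free $R$-module on the $e_{i_1}\cdots e_{i_r}$, the relation $2^m\beta\,1_R=0$ forces $\ell\mid\beta$ (again using $2\nmid\ell$), so the image of the generator has additive order exactly $\ell$ and the kernel is $\ell\cdot\bigl(\Gamma_{2m}(\mathbb Z)/(\Gamma_{2m}(\mathbb Z)\cap\Id(G_{\mathbb Q},\mathbb Z))\bigr)$. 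Hence
\[
\frac{\Gamma_{2m}(\mathbb Z)}{\Gamma_{2m}(\mathbb Z)\cap\Id(G_R,\mathbb Z)}\cong \frac{(\mathbb ZS_{2m})b_{T_\lambda}a_{T_\lambda}}{\ell(\mathbb ZS_{2m})b_{T_\lambda}a_{T_\lambda}}\cong S(\lambda)/\ell S(\lambda).
\]
The main obstacle I expect is the Grassmann analogue of Lemma~\ref{LemmaSymmKryuk}: carefully computing the effect of the antisymmetrizer $b_{T_\lambda}$ on the product of $2$-commutators modulo $\Id(G_{\mathbb Q},\mathbb Z)$ and checking the resulting scalar is a nonzero integer coprime to $\ell$ (so that reduction mod $\ell$ behaves well); everything else is bookkeeping parallel to Section~\ref{SectionUT2R}.
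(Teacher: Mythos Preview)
Your approach is correct, but you are working much harder than necessary by mimicking the $\UT_2$ argument. The paper's proof is almost immediate: since $\lambda=(1^{2m})$, the Specht module $S(\lambda)$ is simply the sign representation --- a free cyclic $\mathbb Z$-module on $b_{T_\lambda}[T_\lambda]$ with $\sigma$ acting by $\sign\sigma$ --- so $S(\lambda)/\ell S(\lambda)\cong\mathbb Z_\ell$ with the sign action. On the other side, the proof of Theorem~\ref{TheoremCodimGrass} already shows that $\Gamma_{2m}(\mathbb Z)/(\Gamma_{2m}(\mathbb Z)\cap\Id(G_R,\mathbb Z))$ is cyclic of order $\ell$, generated by $[x_1,x_2]\cdots[x_{2m-1},x_{2m}]$, and Lemma~\ref{identities} gives $\sigma\cdot[x_1,x_2]\cdots[x_{2m-1},x_{2m}]\equiv(\sign\sigma)[x_1,x_2]\cdots[x_{2m-1},x_{2m}]$. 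Two cyclic groups of the same order carrying the same $S_n$-action are isomorphic as $\mathbb ZS_n$-modules, and that is the entire proof.

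Your detour through $G_{\mathbb Q}$ and an analogue of Lemma~\ref{LemmaSymmKryuk} is not wrong, but that machinery was needed in Section~\ref{SectionUT2R} precisely because $S(n-1,1)$ has rank $n-1>1$; here the Specht module has rank~$1$ and the comparison is trivial. Incidentally, your scalar is off: since the generator already transforms by the sign character, applying $b_{T_\lambda}=\sum_\sigma(\sign\sigma)\sigma$ multiplies it by $(2m)!$, not $2^m m!$. Neither of these is in general coprime to $\ell$ (take $\ell=9$, $m=3$), so your parenthetical claim about coprimality is false --- but your argument never actually uses it, since the passage from $G_{\mathbb Q}$ to $G_R$ goes through the substitution from Theorem~\ref{TheoremCodimGrass}, not through the scalar.
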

\begin{proof}
Note $S(\lambda)$ is a free cyclic group generated by $b_{T_\lambda}[T_\lambda]$
and $\sigma b_{T_\lambda}[T_\lambda] = (\sign \sigma) b_{T_\lambda}[T_\lambda]$
for all $\sigma \in S_n$.

The proof of Theorem~\ref{TheoremCodimGrass} implies that $\frac{\Gamma_{2m}(\mathbb Z)}{\Gamma_{2m}(\mathbb Z) \cap \Id(G_{R}, \mathbb Z)} \cong \mathbb Z_{\ell}$ is a cyclic group generated by $[x_ {1},x_{2}] \dots [x_{2m-1},x_{2m}]$. 
By Lemma~\ref{identities}, $$\sigma [x_{1}, x_{2}] \dots [x_{2m-1},x_{2m}] = (\sign \sigma) [x_{1}, x_{2}] \dots [x_{2m-1},x_{2m}] \text{ for all }\sigma \in S_n.$$
Hence $\frac{\Gamma_{2m}(\mathbb Z)}{\Gamma_{2m}(\mathbb Z) \cap \Id(G_R, \mathbb Z)} \cong S(\lambda)/ \ell S(\lambda)$.
The first assertion is proved.
The second assertion is evident since every long commutator of length greater than $2$
is a polynomial identity of $G_R$.
\end{proof}

\begin{theorem}\label{TheoremOrdinaryZSnGrass}
Let $G_{R}$ be the Grassmann algebra over the $R$. Then there exists a chain of $\mathbb Z S_n$-submodules in $\frac{P_n(\mathbb Z)}{P_n(\mathbb Z)\cap\Id(G_{R},\mathbb Z)}$  with factors  $S(n-k, 1^{k})/\ell S(n-k, 1^{k})$ for each $0 \leqslant k \leqslant n-1$ (each factor occurs exactly once)
where $\ell = \ch R$.\end{theorem}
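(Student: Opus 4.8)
The plan is to apply Theorem~\ref{TheoremZSnOrdinaryProper} to $G_R$ and then feed the proper modules computed in Theorem~\ref{TheoremProperZSnR} into the $\mathbb ZS_n$-version of Young's rule (Theorem~\ref{TheoremYoungsRule}). By Theorem~\ref{TheoremZSnOrdinaryProper}, for each $n$ the module $\frac{P_n(\mathbb Z)}{P_n(\mathbb Z)\cap\Id(G_R,\mathbb Z)}$ carries a filtration whose factors are $M_0/M_2\cong\mathbb Z_\ell$ (trivial action) and, for $2\leqslant t\leqslant n$, the induced modules $\bigl(\frac{\Gamma_t(\mathbb Z)}{\Gamma_t(\mathbb Z)\cap\Id(G_R,\mathbb Z)}\otimes_{\mathbb Z}\mathbb Z\bigr)\uparrow S_n$. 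Theorem~\ref{TheoremProperZSnR} tells us these proper factors vanish for odd $t$ and equal $S(1^{2m})/\ell S(1^{2m})$ for $t=2m$. So the only surviving pieces of the ordinary filtration are the trivial $\mathbb Z_\ell$ coming from degree $0$ and the induced modules $\bigl(S(1^{2m})/\ell S(1^{2m})\bigr)\uparrow S_n$ for $1\leqslant m\leqslant \lfloor n/2\rfloor$.

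Next I would refine each induced piece using Theorem~\ref{TheoremYoungsRule} with $\lambda=(1^{2m})$ and $\ell$ in place of the generic $m$ there (note that $S(\lambda)/0\cdot S(\lambda)=S(\lambda)$ handles $\ell=0$). The theorem gives a series whose factors are $S(\nu)/\ell S(\nu)$ for all $\nu\vdash n$ interlacing $(1^{2m})$, i.e.\ satisfying $\lambda_{i+1}\leqslant\nu_{i+1}\leqslant\lambda_i$. Since $\lambda=(1^{2m})$ has $\lambda_1=\dots=\lambda_{2m}=1$ and $\lambda_i=0$ for $i>2m$, the interlacing conditions force $\nu_i\in\{1,2\}$ for $1\leqslant i\leqslant 2m$, $\nu_{2m+1}\in\{0,1\}$, and $\nu_i=0$ for $i>2m+1$. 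Because $\nu\vdash n$ must be a partition (weakly decreasing), the resulting $\nu$ are exactly the hooks $\nu=(n-j,1^{j})$ with $j$ in a range determined by $m$: writing $a$ for the number of $2$'s among $\nu_1,\dots,\nu_{2m}$ and $\varepsilon=\nu_{2m+1}\in\{0,1\}$, one has $n=2a+(2m-a)+\varepsilon=a+2m+\varepsilon$, so the hook is $(n-j,1^{j})$ with $j=2m-a+\varepsilon$, and $a$ ranges over $0\leqslant a\leqslant 2m$ subject to $n-j\geqslant 1$. A short bookkeeping check shows that as $m$ runs over $1,\dots,\lfloor n/2\rfloor$ and $a$ over its admissible values, every hook $(n-k,1^k)$, $1\leqslant k\leqslant n-1$, arises from exactly one pair $(m,a)$; together with the degree-$0$ contribution $\mathbb Z_\ell=S(n)/\ell S(n)=S(n-0,1^0)/\ell S(n-0,1^0)$ we obtain each factor $S(n-k,1^k)/\ell S(n-k,1^k)$, $0\leqslant k\leqslant n-1$, exactly once.

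Finally, concatenating the Young-rule series inside each graded piece of the Theorem~\ref{TheoremZSnOrdinaryProper} filtration yields a single chain of $\mathbb ZS_n$-submodules of $\frac{P_n(\mathbb Z)}{P_n(\mathbb Z)\cap\Id(G_R,\mathbb Z)}$ with the claimed factors. As a consistency check one can compare orders: the product of the orders of $S(n-k,1^k)/\ell S(n-k,1^k)$ over $0\leqslant k\leqslant n-1$ is $\ell^{\sum_k \dim S(n-k,1^k)}=\ell^{2^{n-1}}$ (since $\sum_{k=0}^{n-1}\binom{n-1}{k}=2^{n-1}$ and $\dim S(n-k,1^k)=\binom{n-1}{k}$), matching $|\frac{P_n(\mathbb Z)}{P_n(\mathbb Z)\cap\Id(G_R,\mathbb Z)}|=\ell^{2^{n-1}}$ from Theorem~\ref{TheoremCodimGrass}. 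The only real obstacle is the combinatorial matching in the previous paragraph—verifying that the union of interlacing sets over all even $t$, plus the degree-$0$ term, reproduces the full hook list with multiplicity one—but this reduces to the elementary identity $\sum_{k=0}^{n-1}\binom{n-1}{k}=2^{n-1}$ organized by the index $m$, so no genuine difficulty arises.
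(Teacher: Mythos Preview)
Your approach is exactly the paper's: invoke Theorems~\ref{TheoremZSnOrdinaryProper}, \ref{TheoremProperZSnR}, and~\ref{TheoremYoungsRule}, observe that inducing from a column $(1^{2m})$ yields only hook shapes, and note that the restriction to even $t$ makes each hook appear once.

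There is, however, a slip in your interlacing bookkeeping. You write that the conditions force $\nu_i\in\{1,2\}$ for $1\leqslant i\leqslant 2m$, but Theorem~\ref{TheoremYoungsRule} imposes no upper bound on $\nu_1$: the chain reads $\lambda_1\leqslant\nu_1$ at the top. With $\lambda=(1^{2m})$ the correct reading is $\nu_2=\cdots=\nu_{2m}=1$, $\nu_{2m+1}\in\{0,1\}$, $\nu_i=0$ for $i>2m+1$, and then $\nu_1=n-(2m-1)-\nu_{2m+1}$ is determined by $|\nu|=n$. So each $m$ contributes precisely the two hooks $(n-2m+1,1^{2m-1})$ and $(n-2m,1^{2m})$ (the latter only when $n>2m$); as $m$ runs over $1,\ldots,\lfloor n/2\rfloor$ these sweep out $k=1,\ldots,n-1$ exactly once, and the $k=0$ term comes from $M_0/M_2\cong\mathbb Z_\ell\cong S(n)/\ell S(n)$. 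Your parameter ``$a$'' and the identity $n=a+2m+\varepsilon$ rest on the mistaken premise and should be dropped, but once the interlacing is read correctly the argument goes through unchanged and matches the paper's (terse) proof.
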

\begin{proof}
Now we apply Theorems~\ref{TheoremZSnOrdinaryProper}, \ref{TheoremYoungsRule}, and \ref{TheoremProperZSnR}.
By Theorem~\ref{TheoremYoungsRule}, a diagram consisting of a single column can generate
only diagrams $D_{(n-k, 1^{k})}$. Since we have diagrams of an even length only, each factor occurs only once.
\end{proof}

\section*{Acknowledgements}

The authors are grateful to Eric Jespers and Mikhail Zaicev for helpful discussions.
They also appreciate the referee for his remarks.


\begin{thebibliography}{99}


\bibitem{Bahturin} Bakhturin, Yu.\,A.
Identical relations in Lie algebras.
VNU Science Press, Utrecht, 1987.


\bibitem{BereleInfDim} Berele, A. Properties of hook
 Schur functions with applications to
p.i. algebras, {\itshape Advances in Applied Math}, \textbf{41}:1 (2008)
 52--75.

 
\bibitem{DrenKurs} Drensky, V.\,S.
        Free algebras and PI-algebras: graduate course in algebra.
        Singapore, Springer-Verlag, 2000.
        
 
 \bibitem{FultonHarris} Fulton, W., Harris, J. Representation theory:
 a first course. New York, Springer-Verlag, 1991.
        

\bibitem{ZaiGia} Giambruno, A., Zaicev, M.\,V.
Polynomial identities and asymptotic methods.
{\itshape AMS Mathematical Surveys and Monographs} \textbf{122},
 Providence, R.I., 2005.
        
\bibitem{JamesSymm} James, G.\,D. The representation theory of the symmetric groups.
Berlin, Heidelberg, New York, Springer-Verlag, 1978.

\bibitem{Kalyulaid} Kalyulaid, U.\, E. Triangular products and stability of representations,
Ph.D. Thesis, Tartu State Univ., 1978. (Russian)

 \bibitem{KrakReg} Krakowski, D., Regev, A. The polynomial
identities of the Grassmann algebra, \textit{Trans.Amer.Math.Soc.}
 {\bfseries 181} (1973),  429--438.
 
\bibitem{MalcevIden} Mal'cev, A.\,I. On algebras defined by identities.  \textit{Mat. Sbornik N.S.} \textbf{26(68)} (1950), 19--33. (Russian)

\bibitem{MalcevYuN} Maltsev, Yu.\,N.
 A basis for the identities of the algebra of upper triangular matrices,
\textit{Algebra and Logic}, \textbf{10} (1971), 242--247.

\bibitem{Polin} Polin, S.\,V. Identities of an algebra of triangular matrices,
\textit{Sib. Math. J.} \textbf{21} (1980), 638--645.

\bibitem{RowenPIR} Rowen, L.\,H. Polynomial identities in ring theory.
{\itshape Pure and Applied Mathematics} \textbf{84},
New York, London, Academic Press, 1980.

\bibitem{Siderov} Siderov, P.\,N. A basis for identities of an algebra of triangular matrices over an arbitrary field, \textit{Pliska Stud. Math. Bulgar.}, \textbf{2} (1981), 143--152. (Russian)

\bibitem{Specht} Specht, W. Gesetze in Ringen. I. 
\textit{Math. Z.} \textbf{52}, (1950), 557--589. (German)

\end{thebibliography}
\end{document}